\newtheorem{theorem}{Theorem}[section]
\newtheorem{corollary}[theorem]{Corollary}
\newtheorem{lemma}[theorem]{Lemma}
\newtheorem{observation}[theorem]{Observation}
\newtheorem{proposition}[theorem]{Proposition}
\newtheorem{fact}[theorem]{Fact}
\newtheorem*{thm*}{Theorem} 
\theoremstyle{definition}
\newtheorem{definition}[theorem]{Definition}
\newtheorem{question}[theorem]{Question}
\theoremstyle{remark}
\newtheorem{remark}[theorem]{Remark}
\renewcommand{\bar}{\overline}
\renewcommand{\P}{\mathbb{P}}
\newcommand{\Q}{\mathbb{Q}}
\newcommand{\Namba}{\mathbb{N}}
\newcommand{\N}{{\overline{N}}}
\newcommand{\G}{\overline{G}}
\newcommand{\ZFC}{\textup{\ensuremath{\textsf{ZFC}}}}
\newcommand{\ZFCm}{\textup{\ensuremath{\textsf{ZFC}^-}}\xspace}
\newcommand{\SCFA}{\textup{\ensuremath{\textsf{SCFA}}}}
\newcommand{\CH}{\textup{\textsf{CH}}\xspace}
\DeclareMathOperator{\cof}{cof}
\DeclareMathOperator{\height}{height}
\DeclareMathOperator{\ran}{range}
\DeclareMathOperator{\Succ}{succ}
\DeclareMathOperator{\Add}{\mathcal A\textit{dd}\,}
\DeclareMathOperator{\Coll}{\mathcal C\textit{oll}\,}
\newcommand{\id}{\textup{\ensuremath{\text{id}}}}
\newcommand{\st}{\; | \;}
\newcommand{\set}[2]{\left\{#1\st #2 \right\}}
\newcommand{\seq}[2]{\langle #1 \st #2 \rangle}
\newcommand{\forces}{\Vdash}
\newcommand{\rest}{\mathbin{\upharpoonright}}
\newcommand{\ubp}{\textup{\textrm{UBP}}}
\newcommand{\sob}{\textrm{Suslin off the generic branch}}
\newcommand{\To}{\longrightarrow}
\newcommand{\kla}[1]{\langle #1 \rangle}
\newcommand{\sub}{\subseteq}
\newcommand{\bN}{{\bar{N}}}
\newcommand{\V}{\mathrm{V}}
\newcommand{\power}{\mathcal{P}}
\newcommand{\bA}{\bar{A}}
\newcommand{\goedel}[1]{{\prec}#1{\succ}}
\DeclareMathOperator{\MPc}{\textup{\textsf{MP}}_{<\omega_1\textup{-closed}}}
\newcommand{\BSCFA}{\ensuremath{\mathsf{BSCFA}}}
\newcommand{\BFA}{\ensuremath{\mathsf{BFA}}}
\newcommand{\MA}{\ensuremath{\mathsf{MA}}}
\newcommand{\ColNothing}{\mathrm{Col}}
\newcommand{\Col}[1]{\ColNothing(#1)}
\begin{document}

\title{Subcomplete Forcing, trees and generic absoluteness}

\author[Fuchs]{Gunter Fuchs}
\address[G.~Fuchs]{Mathematics,
          The Graduate Center of The City University of New York,
          365 Fifth Avenue, New York, NY 10016
          \&
          Mathematics,
          College of Staten Island of CUNY,
          Staten Island, NY 10314}
\email{Gunter.Fuchs@csi.cuny.edu}
\urladdr{http://www.math.csi.cuny/edu/$\sim$fuchs}

\author[Minden]{Kaethe Minden}
 \address[K.~Minden]{Mathematics, Marlboro College, 2582 South Road, Marlboro, VT 05344}
\email{kminden@marlboro.edu}
\urladdr{https://kaetheminden.wordpress.com/}

\thanks{The research of the first author has been supported in part by PSC CUNY research grant 60630-00 48.}

\date{}     					

\begin{abstract}
We investigate properties of trees of height $\omega_1$ and their preservation under subcomplete forcing. We show that subcomplete forcing cannot add a new branch to an $\omega_1$-tree. We introduce fragments of subcompleteness which are preserved by subcomplete forcing, and use these in order to show that certain strong forms of rigidity of Suslin trees are preserved by subcomplete forcings. Finally, we explore under what circumstances subcomplete forcing preserves Aronszajn trees of height and width $\omega_1$. We show that this is the case if \CH{} fails, and if \CH{} holds, then this is the case iff the bounded subcomplete forcing axiom holds. Finally, we explore the relationships between bounded forcing axioms, preservation of Aronszajn trees of height and width $\omega_1$ and generic absoluteness of $\Sigma^1_1$-statements over first order structures of size $\omega_1$, also for other canonical classes of forcing.
\end{abstract}
\maketitle

\section{Introduction}
Much of the work in this paper is motivated by prior work of the first author in which it was observed that the countably closed maximality principle ($\MPc(H_{\omega_2})$) implies countably closed-generic $\mathbf{\Sigma}_2^1(H_{\omega_1})$-absoluteness, defined in Section \ref{sec:GenAbsWideAronszajn}, see \cite{Fuchs:MPclosed}. The point here is that countably closed-generic $\mathbf{\Sigma}^1_1(\omega_1)$-absoluteness is provable in \ZFC. In \cite{Kaethe:Diss}, the maximality principle for subcomplete forcing was considered, and the question arose whether it has the same consequence. Dually to the situation with countably closed forcing, the underlying question is whether subcomplete generic $\mathbf{\Sigma}^1_1(\omega_1)$-absoluteness is provable in \ZFC. Subcomplete forcing was introduced by Jensen, who showed that it cannot add real numbers, yet may change cofinalities to be countable, and that it can be iterated with revised countable support. Moreover all countably closed forcing notions are subcomplete. What makes forcing principles for subcomplete forcing particularly intriguing is that they tend to be compatible with \CH, while otherwise having consequences similar to those of other, more familiar forcing classes, such as proper, semi-proper, or stationary set preserving forcings, which imply the failure of \CH.
There is a close relationship between these generic absoluteness properties and the preservation of certain types of Aronszajn trees, and this led us to investigate properties of trees of height $\omega_1$ and their preservation under subcomplete forcing. The main question we had in mind, stated in \cite[Question 3.1.6]{Kaethe:Diss}, was whether subcomplete forcing can add a branch to an $(\omega_1,{\le}\omega_1)$-Aronszajn tree, that is, a tree of height and width $\omega_1$ that does not have a cofinal branch.

The work on properties of $\omega_1$-trees and their preservation under subcomplete forcing, in particular on strong forms of rigidity, led us to consider weak forms of subcompleteness which themselves are preserved by subcomplete forcing. In Section \ref{sec:FragmentsOfSubcompleteness}, we recall the definition of subcompleteness, originally introduced by Jensen, investigate the relevant fragments of subcompleteness we call minimal subcompleteness and prove the preservation facts we need. In Section \ref{sec:SCandPropertiesOfOmega1Trees}, we show that Suslin trees are preserved under minimally subcomplete forcing, and that such forcing cannot add new branches to $\omega_1$-trees. We then show that certain strong forms of rigidity of Suslin trees, introduced in \cite{FuchsHamkins:DegreesOfRigidity}, are preserved by subcomplete forcing. Finally, in Section \ref{sec:GenAbsWideAronszajn}, we establish the relationships between the preservation of wide Aronszajn trees, forms of generic absoluteness and the bounded subcomplete forcing axiom, of course building on Bagaria's work \cite{Bagaria:BFAasPrinciplesOfGenAbsoluteness} on bounded forcing axioms and principles of generic absoluteness. The main results in this section are as follows. The first one is Theorem \ref{thm:UnderCHEverythingIsClear}, which says:
\begin{thm*}
Assuming \CH, the following are equivalent.
\begin{enumerate}[label=(\arabic*)]
\item Subcomplete generic $\mathbf{\Sigma}^1_1(\omega_1)$-absoluteness.
\item \BSCFA.
\item Subcomplete forcing preserves $(\omega_1,{\le}\omega_1)$-Aronszajn trees.
\end{enumerate}
\end{thm*}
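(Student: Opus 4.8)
The plan is to establish the cycle of implications $(1)\Rightarrow(2)\Rightarrow(3)\Rightarrow(1)$, using \CH{} crucially in the places where the structures involved need to be coded by subsets of $\omega_1$. The overall strategy follows the pattern of Bagaria's characterization of bounded forcing axioms as principles of generic absoluteness, adapted to the subcomplete class and to the specific combinatorial content of wide Aronszajn trees.

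For $(1)\Rightarrow(2)$, I would invoke Bagaria's theorem \cite{Bagaria:BFAasPrinciplesOfGenAbsoluteness}, which identifies $\BFA$ for a forcing class with generic $\mathbf{\Sigma}^1_1(\omega_1)$-absoluteness for that class, provided the class is suitably closed. Here the key point is that the bounded subcomplete forcing axiom $\BSCFA$ is, by Bagaria's analysis, exactly equivalent to subcomplete generic $\mathbf{\Sigma}^1_1(\omega_1)$-absoluteness; this equivalence should hold in $\ZFC$ without needing \CH, so this direction is essentially a citation together with checking that the subcomplete forcing class satisfies Bagaria's hypotheses. For $(3)\Rightarrow(1)$, I would argue contrapositively: if $\mathbf{\Sigma}^1_1(\omega_1)$-absoluteness fails, there is a subcomplete $\P$ and a $\Sigma^1_1$ formula with parameter in $H_{\omega_1}$ that is forced to hold by $\P$ but fails in $V$. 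Under \CH{} the relevant witness can be encoded as a cofinal branch through a tree built from the parameter, so the failure of absoluteness manifests as $\P$ adding a branch to some $(\omega_1,{\le}\omega_1)$-Aronszajn tree, contradicting $(3)$.

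The main obstacle, and the heart of the argument, is $(2)\Rightarrow(3)$: showing that $\BSCFA$ implies subcomplete forcing preserves $(\omega_1,{\le}\omega_1)$-Aronszajn trees. Here I would take a subcomplete $\P$, a name for a cofinal branch $\dot b$ through a ground model tree $T$ of height and width $\omega_1$, and aim to use $\BSCFA$ to produce such a branch already in $V$, whence $T$ was not Aronszajn to begin with. The idea is to set up a $\Sigma^1_1$ statement over $H_{\omega_1}$ asserting the existence of a cofinal branch through $T$ (under \CH, $T$ and its levels are coded by a subset of $\omega_1$, and ``cofinal branch'' is a first-order property over the resulting structure), note that this statement is forced by $\P$, and then apply $\BSCFA$ in the guise of generic absoluteness to reflect the branch down to $V$. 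The delicate point is that $\BSCFA$ as a bounded forcing axiom speaks about filters meeting $\omega_1$-many dense sets, so I must verify that the dense sets coding initial segments of the branch are captured correctly, and that the width-$\omega_1$ hypothesis together with \CH{} guarantees the whole tree has size $\omega_1$ so that it genuinely lives in $H_{\omega_2}$ and its branches are $\Sigma^1_1$-definable over a structure of size $\omega_1$. I expect the subtlety to lie in matching the quantifier complexity—ensuring ``there is a cofinal branch'' is genuinely $\Sigma^1_1$ and not higher—which is precisely where the hypothesis that the tree has width $\omega_1$, rather than being an arbitrary tree on $H_{\omega_2}$, becomes essential.
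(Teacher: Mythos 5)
Your overall route is the same as the paper's: Bagaria's characterization plus a coding argument gives the equivalence of \BSCFA{} with subcomplete generic $\mathbf{\Sigma}^1_1(\omega_1)$-absoluteness outright in \ZFC{} (the paper does this via Theorem \ref{thm:bagaria2}, the translation of $\Sigma_1(H_{\omega_2})$-statements into $\mathbf{\Sigma}^1_1(\omega_1)$-statements through codes in Observations \ref{obs:Translation} and \ref{obs:EquivalenceBtwSigma1-1AndSigma1-absoluteness}, and naturality of the subcomplete class, Fact \ref{fact:SCforcingIsNatural}, to handle the strong versus plain forms of absoluteness --- you correctly flag these as hypotheses to check), and the remaining content under \CH{} is the equivalence with tree preservation. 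However, you have inverted where the difficulty lies. The direction you call ``the main obstacle,'' $\BSCFA\implies$ preservation, is the trivial one: once absoluteness is in hand, ``$T$ has a cofinal branch'' is manifestly a $\Sigma^1_1$-sentence over the structure $\kla{\omega_1,<_T}$ (the nodes can be taken to be countable ordinals since $|T|=\omega_1$), true in $\V[G]$ and hence in $\V$; no \CH, no bookkeeping of dense sets, and no worry about quantifier complexity is needed, and your detour through the filter formulation of \BSCFA{} is superfluous. The real work is in your $(3)\Rightarrow(1)$, which you compress to two sentences.

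In that direction there is a concrete gap: the argument indispensably uses that subcomplete forcing \emph{adds no reals}, and you never invoke this. In the tree of attempts --- nodes $(\alpha,x)$ with $x\sub\alpha$ in $\V$ and $(\alpha,\vec{A}\rest\alpha,x)\models\varphi$, ordered by elementary extension, as in Lemma \ref{lem:CharacterizationOfGenericAbsolutenessUnderCH} --- the branch obtained in $\V[G]$ from a witness $X=\dot{X}^G$ consists of the nodes $(\alpha,X\cap\alpha)$ for $\alpha$ in a club of reflection points, and these lie on the \emph{ground-model} tree only because $X\cap\alpha\in\V$ for countable $\alpha$, which is exactly Jensen's theorem that subcomplete forcing adds no reals (Fact \ref{fact:MinimalSCenoughForPreservationProperties}). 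Without this, the generic branch need not pass through $T$ at all and hypothesis (3) gives nothing. This is not a pedantic omission: the paper's Theorem \ref{thm:GeneralSituation} shows that for ccc forcing, which does add reals, (3) is consistent with \CH{} while (1)/(2) fail, so any proof of $(3)\Rightarrow(1)$ that does not use the no-new-reals property of the class must be incorrect. Relatedly, \CH{} enters at a more specific point than ``coding structures by subsets of $\omega_1$'': it is what bounds each level of the tree of attempts by $2^\omega=\omega_1$, so that $T$ is an $(\omega_1,{\le}\omega_1)$-tree to which (3) applies. With these two ingredients made explicit, your outline matches the paper's proof.
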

Here, \BSCFA{} is the bounded subcomplete forcing axiom. The corresponding equivalence holds for any other natural class of forcing notions (see Definition \ref{def:StrongAbsoluteness}) that don't add reals.
The second main result is Theorem \ref{theorem:CompleteAnswer}, which settles our original question, whether subcomplete forcing can add a branch to an $(\omega_1,{\le}\omega_1)$-Aronszajn tree:
\begin{thm*}
Splitting in two cases, we have:
\begin{enumerate}[label=(\arabic*)]
  \item If \CH{} fails, then subcomplete forcing preserves $(\omega_1,{\le}\omega_1)$-Aronszajn trees.
  \item If \CH{} holds, then subcomplete forcing preserves $(\omega_1,{\le}\omega_1)$-Aronszajn trees iff \BSCFA{} holds.
\end{enumerate}
\end{thm*}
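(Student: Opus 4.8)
My plan is to dispatch the two cases separately. Case (2), under \CH, requires nothing new: it is exactly the equivalence of clauses (2) and (3) of Theorem~\ref{thm:UnderCHEverythingIsClear}, so I would simply cite that result. All the content is in case (1), the assertion that under $\neg\CH$ subcomplete forcing preserves $(\omega_1,{\le}\omega_1)$-Aronszajn trees outright, with no appeal to \BSCFA.

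So assume $\neg\CH$, let \P{} be subcomplete, and suppose toward a contradiction that some $p\in\P$ forces $\dot b$ to be a cofinal branch through an $(\omega_1,{\le}\omega_1)$-Aronszajn tree $T$ that does not already lie in the ground model. The first step is to observe that every proper initial segment of this branch is old: fixing in \V{} an enumeration of $T$ in order type $\omega_1$, each $b\rest\alpha$ becomes, for $\alpha<\omega_1$, a function from $\alpha$ into $\omega_1$; since \P{} preserves $\omega_1$ this function has bounded range, hence is coded by a real, and since \P{} adds no reals it lies in \V. Thus $\dot b$ is forced to be a \emph{fresh} $\omega_1$-branch, exactly the configuration ruled out for narrow trees by the no-new-branch theorem proved earlier for $\omega_1$-trees. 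The plan is to rerun that argument. I would reflect as there: fix $\theta$ verifying subcompleteness with $T,\P,p,\dot b$ in the range of a countable $\sigma\colon\bN\to H_\theta$, pass to preimages $\bar T,\bar\P,\bar p,\dot{\bar b}$ and to a generic $\bar G\ni\bar p$ over $\bN$, and use subcompleteness to lift $\bar G$ back into a \P-generic, producing the branch as the image of ground-model data.

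The one place where the narrow proof genuinely uses the countability of the levels of $T$ is in the combinatorial step that forces the fresh branch to split into continuum-many incompatible values sitting inside a single level $T_\alpha$; a level of countable size cannot hold them, and that is the contradiction. The key point is that this step needs only $|T_\alpha|<\c$, not $|T_\alpha|\le\aleph_0$. Under $\neg\CH$ every level of an $(\omega_1,{\le}\omega_1)$-tree has size at most $\aleph_1<\c$, so the same counting applies verbatim and yields the contradiction, giving preservation. The hard part, and the step I would scrutinise most carefully, is precisely the relativisation of the earlier proof from ``countable levels'' to ``levels of size $<\c$'': I must check that every use of countability there is really only a use of the bound $|T_\alpha|<\c$, with subcompleteness supplying everything else. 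This is also where the dichotomy with case (2) becomes transparent---under \CH{} one has $\aleph_1=\c$, the counting collapses, the obstruction is real, and that is exactly why case (2) cannot be proved directly but must route through \BSCFA{} and Theorem~\ref{thm:UnderCHEverythingIsClear}.
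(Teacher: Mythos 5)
Your decomposition is exactly the paper's: case (2) is read off from Theorem \ref{thm:UnderCHEverythingIsClear}, and case (1) is Theorem \ref{thm:SCforcingAddsNoNewBranchGeneral} (subcomplete forcing adds no cofinal branches to $(\omega_1,{<}2^\omega)$-trees), which under $\neg\CH$ covers width ${\le}\omega_1$; the paper literally just cites these two results. Your account of case (1) has the right contradiction but mislocates where it comes from: the narrow-tree proof (Lemma \ref{lemma:scbranch}) does \emph{not} produce continuum-many incompatible values --- it builds a single descending $\omega$-sequence of conditions diagonalizing against a countable enumeration $\seq{b_n}{n<\omega}$ of the level-$\alpha$ branches, and that construction genuinely needs $|T_\alpha|\le\aleph_0$, since one cannot diagonalize against $\omega_1$-many candidates in $\omega$ steps. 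So the narrow argument does not relativize ``verbatim'' once only $|T_\alpha|<2^\omega$ is assumed; it must be restructured. What works --- and is what the paper's proof of Theorem \ref{thm:SCforcingAddsNoNewBranchGeneral} does --- is the dual counting: since $\bar p$ forces $\dot{\bar b}$ to be new, below every condition there are two incompatible extensions deciding a value of $\dot{\bar b}$ differently, so one builds a binary tree $\seq{\bar p_x}{x\in 2^{<\omega}}$ of conditions meeting all dense sets of $\bN$, obtaining continuum-many generics $\G_r$ with pairwise distinct branches $\bar b_r$ through $\bar T$; applying subcompleteness to each $\G_r$ separately then forces $2^\omega$-many distinct nodes on level $\alpha$ of $T$ (using that the critical point is $\alpha$, so the relevant data below level $\alpha$ is not moved), contradicting $|T_\alpha|\le\omega_1<2^\omega$. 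That is exactly the ``continuum-many incompatible values in one level'' mechanism you describe, so your proposal is essentially the paper's proof once the diagonalization is replaced by this splitting construction --- the step you yourself flagged for scrutiny is precisely the one that changes. Your preliminary freshness observation (initial segments of the new branch lie in $\V$ by boundedness, coding by a real, and no new reals) is correct but not needed: newness of $\dot{\bar b}$ over $\bN$ already yields the splitting used in the construction.
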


We have a complete analysis for other forcing classes as well. The following is Theorem \ref{thm:GeneralSituation}.

\begin{thm*}
\label{thm:GeneralSituation}
Let $\Gamma$ be the class of proper, semi-proper, stationary set preserving, ccc or subcomplete forcing notions. Consider the following properties. \begin{enumerate}[label=(\arabic*)]
\item $\BFA_\Gamma.$
\item $\Gamma$-generic $\mathbf{\Sigma}^1_1(\omega_1)$-absoluteness.
\item Forcings in $\Gamma$ preserve $(\omega_1,{\le}\omega_1)$-Aronszajn trees.
\end{enumerate}
Then (1)$\iff$(2)$\implies$(3), but (3) does not imply (1)/(2).
\end{thm*}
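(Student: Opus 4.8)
The plan is to handle the three parts in turn, using Bagaria's analysis for the bottom equivalence, a coding argument for the implication into (3), and a single $\neg\CH$ model for the failure of the converse. For (1)$\iff$(2) I would appeal directly to Bagaria's characterization of bounded forcing axioms as principles of generic absoluteness \cite{Bagaria:BFAasPrinciplesOfGenAbsoluteness}: for each class $\Gamma$ in the list, $\BFA_\Gamma$ is equivalent to $\Sigma_1(H_{\omega_2})$-absoluteness for $\Gamma$, that is, to the assertion that $H_{\omega_2}^V\prec_{\Sigma_1}H_{\omega_2}^{V[G]}$ for every $\mathbb P\in\Gamma$ and every $\mathbb P$-generic $G$, which is precisely $\Gamma$-generic $\mathbf{\Sigma}^1_1(\omega_1)$-absoluteness as set up in Definition~\ref{def:StrongAbsoluteness}. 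The only point to verify is that each of the five classes satisfies the mild closure hypotheses of Bagaria's framework, the classes being closed under the restrictions to conditions that his argument requires; this is routine and the translation then goes through uniformly.

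For (2)$\implies$(3) I would code ``$T$ has a cofinal branch'' as a $\mathbf{\Sigma}^1_1$ statement over a structure of size $\omega_1$. Given an $(\omega_1,{\le}\omega_1)$-Aronszajn tree $T$, fix the structure $\mathcal A=(\omega_1;{\le_T},h)$, where $\le_T$ is the tree order coded on $\omega_1$ and $h$ is the height function $s\mapsto\otp(\{t\mid t<_T s\},<_T)$. Every forcing in $\Gamma$ preserves $\omega_1$ and leaves $T$, $\le_T$ and $h$ fixed as subsets of $\omega_1$---order types of fixed countable well-orders being absolute---so $\mathcal A$ is literally the same object in $V$ and in any $V[G]$. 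Over $\mathcal A$ the statement $\exists X\,\bigl(X\text{ is a }{\le_T}\text{-chain and }\forall\alpha\,\exists s\in X\ h(s)>\alpha\bigr)$ is $\mathbf{\Sigma}^1_1$ and asserts exactly that $T$ has a cofinal branch. Now if some $\mathbb P\in\Gamma$ destroyed $T$, it could only do so by adding a cofinal branch, since $\omega_1$, the height $\omega_1$, and the width bound are all preserved; but then $V[G]$ would satisfy the displayed $\mathbf{\Sigma}^1_1$ statement, whence by (2) (the nontrivial, downward direction of absoluteness) so would $V$, contradicting that $T$ is Aronszajn. Thus $T$ remains $(\omega_1,{\le}\omega_1)$-Aronszajn, giving (3).

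For the non-implication (3)$\not\Rightarrow$(1) I would separate the two using the subcomplete class. It suffices to produce a model of $\neg\CH$ in which $\BSCFA$ fails: there, Theorem~\ref{theorem:CompleteAnswer}(1) gives that subcomplete forcing preserves $(\omega_1,{\le}\omega_1)$-Aronszajn trees, so (3) holds, while (1) fails. Adding $\omega_2$ Cohen reals over $L$ yields such a model: \CH{} fails in the extension, and since $\BSCFA$ implies that $\omega_2$ is inaccessible in $L$ (the reflecting-cardinal lower bound for the bounded proper forcing axiom adapts, as the collapses involved can be taken countably closed, hence subcomplete), while the Cohen extension computes $\omega_2=\omega_2^L$ as a successor cardinal of $L$, $\BSCFA$ must fail there.

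I expect the main obstacle to be the bookkeeping in the second step: verifying that the coding structure $\mathcal A$ is genuinely unchanged from $V$ to $V[G]$ and that acquiring a cofinal branch is the \emph{only} way preservation can fail. Once that is pinned down, (2)$\implies$(3) is immediate, (1)$\iff$(2) is essentially a citation after checking the closure hypotheses, and the separating model in the third step rests only on the (known) reflecting-cardinal lower bound for $\BSCFA$.
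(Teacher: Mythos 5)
Your first two steps are sound and essentially match the paper's route: (1)$\iff$(2) is Bagaria's theorem combined with a coding translation between $\Sigma_1(H_{\omega_2})$-absoluteness and $\mathbf{\Sigma}^1_1(\omega_1)$-absoluteness (the paper spells this translation out in Observations \ref{obs:Translation} and \ref{obs:EquivalenceBtwSigma1-1AndSigma1-absoluteness}; it is a real coding argument, not an identity of definitions, but you acknowledge a translation is needed), and (2)$\implies$(3) is exactly the paper's argument that ``$T$ has a cofinal branch'' is $\mathbf{\Sigma}^1_1$ over a structure of size $\omega_1$ that is unchanged by $\omega_1$-preserving forcing.

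The genuine gap is in the third step. The theorem asserts the non-implication (3)$\not\Rightarrow$(1)/(2) for \emph{each} of the five classes $\Gamma$ separately, but your separating model only works when $\Gamma$ is the class of subcomplete forcings. The crucial ingredient you invoke, namely that $\neg\CH$ implies (3), is Theorem \ref{thm:SCforcingAddsNoNewBranchGeneral}, whose proof uses subcompleteness in an essential way; it is simply false for the other classes. Indeed, in your model $L[G]$ obtained by adding $\omega_2$ Cohen reals over $L$ there are Suslin trees, and a Suslin tree is itself a ccc (hence proper, semi-proper, and stationary set preserving) forcing that adds a cofinal branch to itself, which is an $(\omega_1,{\le}\omega_1)$-Aronszajn tree --- so (3) \emph{fails} there for $\Gamma={}$ccc, proper, semi-proper, and stationary set preserving, and your model separates nothing for those classes. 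The paper handles them differently: for ccc it uses the Mildenberger--Shelah model of $\CH$ plus ``every Aronszajn tree is special,'' together with a restriction argument (shrinking a wide tree to the nodes a condition forces into the branch, yielding an ordinary Aronszajn tree) to conclude that ccc forcing then preserves wide Aronszajn trees, while (1)/(2) fail since $\BFA_\Gamma$ for ccc implies $\neg\CH$; for the classes containing all proper forcings it uses that $\MA_{\omega_1}$ implies every $(\omega_1,{\le}\omega_1)$-Aronszajn tree is special, hence preserved by all $\omega_1$-preserving forcing, and then compares consistency strengths ($\MA_{\omega_1}$ is equiconsistent with \ZFC{} while $\BFA_\Gamma$ requires a reflecting cardinal). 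Your argument for the subcomplete case itself is correct, but you need these (or similar) additional arguments to cover the remaining four classes.
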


Of course, subcomplete forcing is the only one of these forcing classes whose bounded forcing axiom is consistent with \CH, and it is in the presence of \CH that the unusual situation arises that these conditions are equivalent for this class.




\section{Fragments of subcompleteness and their preservation}
\label{sec:FragmentsOfSubcompleteness}

We begin by recalling the concept of subcompleteness of a partial order, as introduced by Jensen (see \cite{Jensen2014:SubcompleteAndLForcingSingapore}). If $M$ and $N$ are models of the same first order language, then we write $M\prec N$ to express that $M$ is an elementary submodel of $N$, and we write $\sigma:M\prec N$ to say that $\sigma$ is an elementary embedding from $M$ to $N$. If $X$ is a subset of the domain of $N$, then we write $X\prec N$ to express that the reduct $N|X$ of $N$ to $X$ is an elementary submodel of $N$.

\begin{definition}
A transitive set $N$ (usually a model of $\ZFC^-$) is \emph{full} if there is an ordinal $\gamma$ such that $L_\gamma(N)\models\ZFC^-$ and $N$ is regular in $L_\gamma(N)$, meaning that if $x\in N$, $f\in L_\gamma(N)$ and $f:x\To N$, then $\ran(f)\in N$.
\end{definition}

\begin{definition}
For a poset $\P$, $\delta(\P)$ is the minimal cardinality of a dense subset of $\P$.
\end{definition}

\begin{definition}
Let $N=L^A_\tau=\kla{L_\tau[A],\in,A\cap L_\tau[A]}$ be a $\ZFC^-$ model where $\tau$ is a cardinal and $A$ is a set, $\delta$ an ordinal and $X\cup\{\delta\}\sub N$. Then $C^N_\delta(X)$ is the smallest $Y\prec N$ such that $X\cup\delta\sub Y$.
\end{definition}

\begin{definition}
\label{def:subcompleteness}
A forcing $\P$ is \emph{subcomplete} if there is a cardinal $\theta>\delta=\delta(\P)$ which \emph{verifies the subcompleteness of $\P$}, which means that $\P\in H_\theta$, and for any $\ZFC^-$ model $N=L_\tau^A$ with $\theta<\tau$ and $H_\theta\sub N$, any $\sigma:\bN\prec N$ such that $\bN$ is countable, transitive and full and such that $\P,\theta\in\ran(\sigma)$, any $\bar{G}\sub\bar{\P}$ which is $\bar{\P}$-generic over $\bN$, and any $s\in\ran(\sigma)$, the following holds. Letting $\sigma(\bar{s},\bar{\theta},\bar{\P})=s,\theta,\P$, there is a condition $p\in\P$ such that whenever $G\sub\P$ is $\P$-generic over $\V$ with $p\in G$, there is in $\V[G]$ a $\sigma'$ such that
\begin{enumerate}[label=(\arabic*)]
  \item $\sigma':\bN\prec N$,
  \item $\sigma'(\bar{s},\bar{\theta},\bar{\P})=s,\theta,\P$,
  \item $(\sigma')``\bar{G}\sub G$,
  \item $C^N_{\delta}(\ran(\sigma'))=C^N_{\delta}(\ran(\sigma))$.
\end{enumerate}
\end{definition}

The three main properties of subcomplete forcings are that they don't add reals, preserve stationary subsets of $\omega_1$, and that they can be iterated (with revised countable support).
We now isolate key parts of what it means that a forcing is subcomplete, which are in a sense responsible for these preservation properties. The remaining parts are crucial for the iterability of the resulting class of forcings. We call the stripped down version of the definition of subcompleteness \emph{minimal subcompleteness}.

\begin{definition}
\label{def:MinimalSC}
If $X$ is a set such that the restriction of $\in$ to $X$ orders $X$ extensionally, then let $\sigma_X:N_X\To X$ be the inverse of the Mostowski collapse of $X$, where $N_X$ is transitive.

Let $N$ be a transitive model of $\ZFC^-$, and let $\P\in X\prec N$, where $X$ is countable. Then $X$ \emph{elevates to $N^\P$} if the following holds: let $c\in N_X$, and let $\bar{\P}=\sigma_X^{-1}(\P)$. Then, whenever $\bar{G}$ is generic over $N_X$ for $\bar{\P}$, there is a condition $p\in\P$ such that if $G$ is $\P$-generic over $\V$ and $p\in G$, then in $\V[G]$, there is an elementary embedding $\sigma':N_X\prec N$ such that $\sigma'``\bar{G}\sub G$ 
and $\sigma'(c)=\sigma_X(c)$.

A forcing notion $\P$ is \emph{minimally subcomplete} if for all sets $a$, $H$, there is a transitive model of $\ZFC^-$ the form $N = L_\tau^A$ with $a\in N$ and $H\sub N$ such that
\[Z_{N,\P,a}=\{ \omega_1 \cap X \st a\in X\ \text{and}\ X\ \text{elevates to}\ \ N^\P\}\]
contains a club subset of $\omega_1$.
\end{definition}

Let us first show that this is indeed a weakening of subcompleteness.

\begin{observation}
\label{obs:MinimalIsWeaker}
If $\P$ is subcomplete, then $\P$ is minimally subcomplete.
\end{observation}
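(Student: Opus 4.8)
The plan is to unpack the definition of subcompleteness and show that it directly yields the (weaker) conditions required by minimal subcompleteness. The essential difference between the two notions is that subcompleteness quantifies over arbitrary elementary embeddings $\sigma:\bar N\prec N$ (with $\bar N$ countable, transitive, full, and $\P,\theta\in\ran(\sigma)$), whereas minimal subcompleteness only needs to produce, for a single well-chosen $N$, a \emph{club} of ordinals $\omega_1\cap X$ arising from countable $X\prec N$ that ``elevate'' via the inverse Mostowski collapse $\sigma_X$. So the work is to convert the embedding-based formulation into the collapse-based, club formulation.

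\emph{First} I would fix sets $a,H$ and use subcompleteness to choose an appropriate witnessing cardinal $\theta$ and a model $N=L^A_\tau$ with $\theta<\tau$, $H_\theta\subseteq N$, and large enough that $a\in N$, $H\subseteq N$, and $\P,\theta\in N$. \emph{Next}, the key step is to show that for club many countable $X\prec N$ with $a\in X$ (and hence $\P,\theta\in X$ once we arrange $\P,\theta\in a$ or include them), the collapse structure $N_X$ is countable, transitive, and full, and the inverse collapse $\sigma_X:N_X\prec N$ is an elementary embedding of the right form. Fullness of $N_X$ should hold for club many $X$ because $N$ computes $H_\theta$ correctly and any such $X$ reflects enough of the $L_\gamma(N)$ witnessing fullness; this is a standard reflection argument, and I would phrase it as: the set of $X\prec N$ for which $N_X$ is full contains a club. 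Having this, $\sigma_X$ is an elementary embedding $\bar N\prec N$ with $\bar N=N_X$ countable, transitive, full, and $\P,\theta\in\ran(\sigma_X)$, so subcompleteness applies to it.

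\emph{Then} I would feed $\sigma_X$ into the definition of subcompleteness with the relevant parameter $s$ taken to encode the given $c\in N_X$ (set $s=\sigma_X(c)$, so $\bar s=c$). Subcompleteness produces a condition $p\in\P$ such that, forcing below $p$, there is in $\V[G]$ an embedding $\sigma':\bar N\prec N$ with $\sigma'(\bar s,\bar\theta,\bar\P)=s,\theta,\P$, $\sigma'``\bar G\subseteq G$, and the Skolem-hull condition $C^N_\delta(\ran\sigma')=C^N_\delta(\ran\sigma_X)$. In particular $\sigma'(c)=s=\sigma_X(c)$ and $\sigma'``\bar G\subseteq G$, which are exactly the two requirements in the definition of ``$X$ elevates to $N^\P$.'' Hence every such $X$ elevates, so $Z_{N,\P,a}\supseteq\{\omega_1\cap X\st X\prec N\ \text{countable},\ a\in X,\ N_X\ \text{full}\}$, which contains a club of $\omega_1$ since the set of countable $X\prec N$ containing any fixed parameter and with $N_X$ full is club.

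\emph{The main obstacle} I anticipate is the bookkeeping needed to match the two formulations precisely: in particular, verifying that $N_X$ is full for club many $X$ (rather than merely for ``enough'' $X$), and checking that the hull condition (4) of subcompleteness, which guarantees $\omega_1\cap\ran(\sigma')=\omega_1\cap\ran(\sigma_X)$ and hence that $\sigma'$ is another embedding realizing the \emph{same} ordinal $\omega_1\cap X$, is not actually needed for minimal subcompleteness — so I can simply discard condition (4) and the parameter-matching for $\theta,\P$ beyond what is used. The conceptual content is entirely routine once the translation between $\sigma_X$-collapses and the abstract embeddings in Definition \ref{def:subcompleteness} is set up correctly; the delicate point is ensuring the relevant collection of $X$ is genuinely club (using that countable elementary submodels of a fixed structure containing a fixed parameter form a club in $[N]^\omega$, which projects to a club in $\omega_1$ via $X\mapsto\omega_1\cap X$) and that fullness is preserved under collapse on a club.
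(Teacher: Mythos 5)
Your overall strategy is the right one, and the second half of your proposal is fine: applying Definition \ref{def:subcompleteness} to $\sigma_X$ with $s:=\sigma_X(c)$ (so $\bar{s}=c$), keeping only clauses (1)--(3) and discarding the hull condition (4), is exactly how the paper converts subcompleteness into elevation. The gap is in the step you label as the key one: the claim that ``the set of $X\prec N$ for which $N_X$ is full contains a club,'' justified ``because $N$ computes $H_\theta$ correctly and any such $X$ reflects enough of the $L_\gamma(N)$ witnessing fullness; this is a standard reflection argument.'' That is not an argument, and it is precisely where all the content of this Observation lies. Nothing was ever assumed about $N$ being full, and even if it were, fullness does not reflect downward to Mostowski collapses: fullness of $N_X$ requires an \emph{external} witness of the form $L_\gamma(N_X)\models\ZFC^-$, built over the collapsed model, in which $N_X$ is regular, and no property of $N$ hands you such a witness. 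Indeed it genuinely fails for many $X$: if, say, $X$ is the definable Skolem hull in $N$ of $\alpha=\omega_1\cap X$, then $N_X$ is pointwise definable from ordinals below $\alpha$, so the least $\ZFC^-$-level of the hierarchy over $N_X$ contains its satisfaction relation and hence a surjection from $\alpha$ onto $N_X$; thus $N_X$ cannot be regular in any such level and is not full. Such $X$ realize club-many traces $\omega_1\cap X$, so fullness cannot simply be asserted to hold ``on a club'' without a construction.

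The missing construction is the two-tier trick that constitutes the paper's proof. One does not work with countable submodels of the target model $N$ itself: instead one sets $\mu=\tau^+$, $\nu=\tau^{++}$, takes the target to be $N'=L_\mu[A]$ (with $A\sub\tau$), and considers countable $Y\prec L_\nu[A]$, setting $X=Y\cap L_\mu[A]$. Then the collapse $N_Y=L_{\bar\nu}[\bar{A}]$ is a $\ZFC^-$ model in which $\bar\mu=\sigma_Y^{-1}(\mu)$ is regular and the largest cardinal, and which has the same bounded subsets of $\bar\mu$ as $N_X=L_{\bar\mu}[\bar{A}]$; so $N_Y$ itself serves as the fullness witness for $N_X$, and subcompleteness then applies to $\sigma_X$ as you intend. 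Crucially, the club one obtains is a club of \emph{ordinals}, namely $\{\omega_1\cap Y\st a\in Y\prec L_\nu[A],\ Y\ \text{countable}\}$, using $\omega_1\cap X=\omega_1\cap Y$ --- the elevating submodels $X$ of $N'$ arise only as traces of submodels of the taller model $L_\nu[A]$, and the definition of minimal subcompleteness demands a club of traces on $\omega_1$, not a club of models, precisely to accommodate this. Without this (or an equivalent) production of full collapses, your proof does not go through.
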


\begin{proof}
Let $\theta$ verify the subcompleteness of $\P$. Given sets $a$, $H$ let $\tau>\theta$, $A\sub\tau$, and $H_\theta\in N=L_\tau[A]$ with $a\in N$ and $H\sub N$. Let $\mu=\tau^+$ and $\nu=\tau^{++}$. Let $N'=L_\mu[A]$. We claim that $Z_{N',\P,a}$ contains a club. Let
\[Z=\{ \omega_1 \cap Y \st a\in Y\prec L_\nu[A],\ Y\ \text{countable}\}\]
Then clearly, $Z$ contains a club. Moreover, if $\omega_1 \cap Y\in Z$, where
$Y\prec L_\nu[A]$ and $Y$ is countable, then, letting $\bA=(\sigma_Y)^{-1}(A)$, $\bar{\tau}=(\sigma_Y)^{-1}(\tau)$ and $\bar{\mu}=(\sigma_Y)^{-1}(\mu)$, it follows that $N_Y$ is of the form $L_{\bar{\nu}}[\bar{A}]$, where $\bA\sub\bar{\tau}$, so $\bA$ is a bounded subset of $\bar{\mu}$, which is regular, and also the largest cardinal in $N_Y$. Letting $\bN=L_{\bar{\mu}}[\bar{A}]$, it follows that $\bN$ is full, as witnessed by $N_Y$ (note that $N_Y$ has the same bounded subsets of $\bar{\mu}$ as $\bN$, and it is a model of $\ZFC^-$). Let $\sigma=\sigma_Y\rest\bN$, $X=\ran(\sigma)=Y\cap L_\mu[A]$. Then, since $\P$ is subcomplete, $X$ elevates to ${N'}^\P$. Since $\omega_1\cap X=\omega_1\cap Y$, it follows that $Z_{N',\P,a}$ contains a club, as claimed.
\end{proof}

Next, let's check that minimal subcompleteness, while weaker than subcompleteness, is still strong enough to preserve the properties mentioned before.

\begin{fact}
\label{fact:MinimalSCenoughForPreservationProperties}
Let $\P$ be a minimally subcomplete forcing, and let $G\sub\P$ be $\P$-generic over $\V$.
\begin{enumerate}[label=(\arabic*)]
\item
\label{item:NoNewReals}
$\power(\omega)^\V=\power(\omega)^{\V[G]}$.
\item
\label{item:SSP}
If $S$ is stationary in $\omega_1$, then this remains true in $\V[G]$.
\end{enumerate}
\end{fact}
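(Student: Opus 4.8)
The plan is to prove both preservation properties by exploiting the defining feature of minimal subcompleteness: the existence of a club of countable $X \prec N$ that elevate to $N^{\P}$, together with the elementary embeddings $\sigma'$ they produce in the generic extension. The guiding intuition is that these embeddings behave like the ones available for genuinely subcomplete forcing, but we only have them on a club, which should suffice for both claims since both ``no new reals'' and ``stationarity preservation'' are statements that can be reduced to what happens at countably many ordinals below $\omega_1$.

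For part \ref{item:NoNewReals}, I would argue by contradiction. Suppose $\dot{r}$ is a $\P$-name and $p_0 \in \P$ forces that $\dot{r}$ is a new real, i.e.\ $\dot{r} \colon \omega \to 2$ and $\dot{r} \notin \V$. Fix $a$ coding $\dot{r}$, $p_0$, $\P$ and apply minimal subcompleteness to obtain $N = L_\tau^A$ with $a \in N$, $H_\theta \subseteq N$ for a large enough $\theta$, such that $Z_{N,\P,a}$ contains a club. Pick any $X$ in this club with $a \in X$; then $X$ elevates to $N^{\P}$, and letting $\bar{\P} = \sigma_X^{-1}(\P)$, $\bar{p}_0 = \sigma_X^{-1}(p_0)$, $\bar{r} = \sigma_X^{-1}(\dot{r})$, I would pass to a filter $\bar{G}$ generic over the \emph{countable} model $N_X$ containing $\bar{p}_0$ (such filters exist by a standard enumeration argument since $N_X$ is countable). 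The crucial point is that $\bar{r}^{\bar{G}} \in N_X[\bar{G}]$ is an actual real $s \colon \omega \to 2$ already present in $\V$. Then elevation yields, below some $p \le p_0$, an embedding $\sigma' \colon N_X \prec N$ in $\V[G]$ with $\sigma' `` \bar{G} \subseteq G$ (choosing $c$ so that $\sigma'(c) = \sigma_X(c)$ pins down the relevant parameters). By elementarity and the genericity containment, the real decided by $\dot{r}$ under $G$ agrees with $\sigma'(\bar{r}^{\bar{G}})$; but $\bar{r}^{\bar{G}}$ is a real, so $\sigma'$ fixes it, and hence $\dot{r}^G = s \in \V$, contradicting that $p_0$ forced $\dot{r}$ new.

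For part \ref{item:SSP}, suppose toward a contradiction that $S \subseteq \omega_1$ is stationary in $\V$ but some $p_0 \in \P$ forces that $\dot{C}$ is a club in $\omega_1$ disjoint from $S$. Fix $a$ coding $S$, $\dot{C}$, $p_0$, $\P$, and obtain $N = L_\tau^A$ with $a \in N$, $H_\theta \subseteq N$, such that $Z_{N,\P,a}$ contains a club $D \subseteq \omega_1$. Since $S$ is stationary, choose $\delta \in S \cap D$, realized as $\delta = \omega_1 \cap X$ for some countable $X \prec N$ with $a \in X$ that elevates to $N^{\P}$. I would select $\bar{G}$ generic over $N_X$ below $\bar{p}_0 = \sigma_X^{-1}(p_0)$ and apply elevation to get, below some $p \le p_0$, an embedding $\sigma' \colon N_X \prec N$ in $\V[G]$ with $\sigma' `` \bar{G} \subseteq G$ and $\sigma'$ fixing the parameters. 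The key observation is that $\sigma'$ and $\sigma_X$ agree on the ordinals below $\delta = \omega_1 \cap X$ (each is the identity there, as $\delta$ is the critical point), so that the club $\dot{C}^{G}$ is forced to be unbounded in $\delta$ and, by continuity, $\delta \in \dot{C}^{G}$; since $\delta \in S$, this contradicts $p_0 \Vdash \dot{C} \cap S = \emptyset$.

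I expect the main obstacle to lie in the bookkeeping around what the embedding $\sigma'$ actually preserves. In both parts the argument hinges on arranging, via the choice of the parameter $c$ in the definition of elevation, that $\sigma'$ agrees with $\sigma_X$ on enough of the structure---in particular on $\delta = \omega_1 \cap X$ and on the names for $\dot{r}$ or $\dot{C}$---so that genericity facts verified in $N_X[\bar{G}]$ transfer faithfully to $\V[G]$. The subtle step for part \ref{item:SSP} is verifying that $\delta$ lands in the forced club: one must check that $\sigma' `` \bar{G} \subseteq G$ forces cofinally many elements of $\dot{C}^G$ below $\delta$, using that $N_X$ correctly sees $\bar{C} = \sigma_X^{-1}(\dot{C})$ as a $\bar{\P}$-name for a club and that $\sigma'$ maps the values $\bar{C}^{\bar{G}}$ into $\dot{C}^G$ below $\sigma'(\delta) = \omega_1$. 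Once this continuity-at-$\delta$ point is nailed down, both contradictions follow cleanly, and the restriction to a club of good $X$ is exactly what makes the stationarity argument go through.
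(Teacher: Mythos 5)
Your proposal is correct and takes essentially the same route as the paper's own proof: in (1) you transfer the ground-model real $\bar{r}^{\bar{G}}$ along the lifted embedding, and in (2) you use stationarity of $S$ exactly where the paper does---to pick $\delta=\omega_1\cap X$ in $S$ intersected with the club inside $Z_{N,\P,a}$---and then argue that $\delta$ is a limit point of the forced club because the elevated embedding is the identity below $\delta$. The only cosmetic imprecision is the phrase ``below some $p\le p_0$'': elevation need not give $p\le p_0$, but since $\sigma'``\bar{G}\sub G$ and $\sigma'(\bar{p}_0)=p_0$, one still gets $p_0\in G$, which is all the argument uses.
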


\begin{proof}
For \ref{item:NoNewReals}., the proof is exactly the same as with subcomplete forcing.
Assume the contrary, and suppose toward a contradiction that there is name $\dot r \in V^\P$ for a subset of $\omega$  and a condition $q$ forcing that $\dot r$ is new. Let $a = \kla{q, \dot r}$. By minimal subcompleteness, there is $N$ with $H_\theta \subseteq N$ for some large enough $\theta$, so that $Z_{N, \P, a}$ contains a club. So there is $\alpha = \omega_1 \cap X \in Z_{N, \P, a}$, where $X\prec N$ is countable. Then $X$ elevates to $N^\P$. Let $\G$ be a generic filter for $\sigma_X^{-1}(\P)=\overline \P$ over $N_X$, such that $\sigma_X^{-1}(q)=\overline q \in \G$. So we have a condition $p \in \P$, where, letting $G$ be $\P$-generic over $V$ containing $p$, we have an elementary embedding $\sigma': N_X \To N$ such that $\sigma'``\G \subseteq G$. Thus $q \in G$ as well, so we have that $r = \dot r^G$ is new. But this is a contradiction as $r = \sigma'``\overline r = \sigma_X``\overline r=\bar{r}\in V$, where $\overline r=\sigma_X^{-1}(r)$.

For \ref{item:SSP}., assume the contrary. Let $S \subseteq \omega_1$ stationary, and suppose towards a contradiction that there is a $\dot C \in V^{\P}$ such that for some $q\in \P$,
\[q\Vdash ``\dot C \subseteq \check \omega_1 \text{ is club} \ \land \ \check S \cap \dot C = \emptyset."\]
Let $a=\kla{q,\dot{C},S}$, and, by minimal subcompleteness, let
$N$ be such that $Z_{N,\P,a}$ contains a club, where $H_\theta\sub N$, for some $\theta$ which is sufficiently large to conclude that the fact displayed above holds in $N$. Let $\alpha\in S\cap Z_{N,\P,a}$, and let $X$ witness this. That is, $a\in X\prec N$, $\alpha=\omega_1\cap X$, and $X$ elevates to $N^\P$. 
Let $\bar{a}=\sigma_X^{-1}(a)$, $\bar{\P}=\sigma_X^{-1}(\P)$, and let $c=\kla{\bar{a},\bar{\P}}$.

Since $\sigma_X:N_X\prec N$ is elementary, $\bar{a}$ is of the form $\kla{\bar{q},\dot{\bar{C}},\bar{S}}$, where $\sigma_X(\bar{q})=q$, $\sigma_X(\dot{\bar{C}})=\dot{C}$, $\sigma_X(\bar{S})=S$, and in $N_X$, $\bar{q}$ forces with respect to $\bar{\P}$ that $\dot{\bar{C}}\subseteq\check{\alpha}$ is club.

Let $\bar{G}$ be $\bar{\P}$-generic over $\bar{N}$ with $\bar{q}\in\bar{G}$.
Since $X$ elevates to $N^\P$, there is a condition $p\in\P$ such that if we let $G$ be $\P$-generic over $\V$ with $p\in G$, then there is in $\V[G]$ an elementary embedding $\sigma':\bN\prec N$ with $\sigma'(c)=\sigma_X(c)$ and $\sigma'``\bar{G}\sub G$. So $\sigma'$ lifts to an embedding $\sigma^*:N_X[\bar{G}] \prec N[G]$ in $V[G]$.
Let $\bar{C} = \dot{\bar{C}}^{\G}$, $C = \dot C^G$.
Since $q=\sigma'(\bar{q})\in G$, we have that $C \subseteq \omega_1$ is club in $\V[G]$ and $S \cap C = \emptyset$ in $N[G]$. However, $\alpha=\omega_1^\bN$, so $\bar{q}\in\bar{G}$ implies that $\bar{C}$ is club in $\alpha$. Since $\sigma^*\rest\alpha=\id$, it follows that $C\cap\alpha=\bar{C}\cap\alpha$, and so, $\alpha<\omega_1^\V$ is a limit point of $C$, so $\alpha\in C\cap S$, a contradiction.
\end{proof}


It was shown in \cite{Kaethe:Diss} that the subcompleteness of a forcing $\P$ is very fragile: it can be destroyed by countably closed forcing - note in this context that Jensen pointed out that every countably closed forcing is subcomplete. The same negative result remains true of minimal subcompleteness. In the following proposition, $\Namba$ denotes Namba forcing, which Jensen proved to be subcomplete, assuming \CH{} (see \cite{Jensen2014:SubcompleteAndLForcingSingapore} and \cite{Jensen:SCofNambaTypeForcings}).

\begin{proposition}[\cite{Kaethe:Diss}]
Forcing with $\Coll(\omega_1, \omega_2) \times \Namba$ collapses $\omega_1$.
\end{proposition}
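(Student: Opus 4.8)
The plan is to invoke the product lemma to write the extension as $\V[G][H]$, where $G$ is $\Coll(\omega_1,\omega_2)$-generic and $H$ is $\Namba$-generic, with $G$ and $H$ mutually generic (so $H$ is generic over $\V[G]$ and $G$ over $\V[H]$). I would then extract the two objects that stand in tension with each other. Since $\Coll(\omega_1,\omega_2)$ collapses $\omega_2^\V$ to have cardinality $\aleph_1$, there is in $\V[G]$ a surjection $f\colon\omega_1^\V\To\omega_2^\V$; and since $\Namba$ adds a cofinal $\omega$-sequence through $\omega_2^\V$, there is in $\V[H]$, hence in $\V[G][H]$, a function $g\colon\omega\To\omega_2^\V$ whose range is cofinal in $\omega_2^\V$. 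The goal is to show that $\cof^{\V[G][H]}(\omega_1^\V)=\omega$, which means precisely that $\omega_1^\V$ is collapsed.

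Working in $\V[G][H]$, I would use $f$ to pull the Namba sequence down into $\omega_1^\V$: define $h\colon\omega\To\omega_1^\V$ by letting $h(n)$ be the least $\alpha<\omega_1^\V$ with $f(\alpha)=g(n)$, which is well defined because $f$ is onto. Put $\beta=\sup_{n<\omega}h(n)\le\omega_1^\V$ and split into two cases. If $\beta=\omega_1^\V$, then $h$ maps $\omega$ cofinally into $\omega_1^\V$, so $\cof^{\V[G][H]}(\omega_1^\V)=\omega$; then $\omega_1^\V$ is no longer a regular uncountable cardinal, i.e.\ it is collapsed, and we are done.

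The crux is to rule out the case $\beta<\omega_1^\V$. Here I would note first that $\ran(g)\sub f``\beta$. The decisive point is that $f``\beta$ already belongs to $\V[G]$: indeed $\beta$, being an ordinal below $\omega_1^\V$, lies in $\V\sub\V[G]$, and $f\in\V[G]$, so $f``\beta$ is computed in $\V[G]$. Because $\Coll(\omega_1,\omega_2)$ is countably closed it preserves $\omega_1$, so $\beta$ remains countable in $\V[G]$ and $f``\beta$ is a countable subset of $\omega_2^\V$ there. But a countably closed forcing adds no new $\omega$-sequences of ordinals, so $f``\beta\in\V$ and is countable in $\V$. Since $f``\beta\supseteq\ran(g)$ is cofinal in $\omega_2^\V$, this produces a countable cofinal subset of $\omega_2^\V$ lying in $\V$, contradicting the regularity of $\omega_2^\V$ in $\V$. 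Hence $\beta<\omega_1^\V$ cannot occur, so we are always in the first case and $\omega_1^\V$ is collapsed.

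I expect the main obstacle to be exactly this elimination of the bounded case: the key---and at first glance surprising---observation is that $\beta$ is an ordinal and hence available already in $\V[G]$, which is what permits the use of countable closure of the collapse to push $f``\beta$, and with it a countable cofinal subset of $\omega_2^\V$, back into $\V$. This is also where mutual genericity is essential, since it guarantees that the behavior of $\omega_1^\V$ and $\omega_2^\V$ in $\V[G]$ is governed solely by the countably closed collapse, with the Namba sequence entering only afterward.
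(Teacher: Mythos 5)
Your proof is correct. The paper itself states this proposition without proof, citing Minden's dissertation, and the argument there is essentially the one you give: pull the Namba sequence back through the collapsing map and use the countable closure of $\Coll(\omega_1,\omega_2)$ (via the product lemma, which guarantees $\V[G]$ is an extension by the collapse alone) to rule out a bounded supremum, so that $\omega_1^\V$ acquires cofinality $\omega$ and is therefore countable in the extension. One cosmetic slip: if the supremum $\beta=\sup_{n<\omega}h(n)$ is attained by some $h(n)$, then you only get $\ran(g)\sub f``(\beta+1)$ rather than $\ran(g)\sub f``\beta$; replacing $\beta$ by $\beta+1$, which is still a countable ordinal of $\V$, makes your case-two contradiction go through verbatim.
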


Thus, after forcing with $\Coll(\omega_1,\omega_2)$, the ground model version of Namba forcing collapses $\omega_1$, hence adds a real, and is thus not even minimally subcomplete any longer (see Fact \ref{fact:MinimalSCenoughForPreservationProperties}).
However, the minimal fragment of subcompleteness survives countably distributive forcing of size at most $\omega_1$, as we shall show presently.

\begin{lemma}
\label{lemma:MinimalPartOfSubcompletenessIsPreserved}
Let $\P$ be subcomplete. Then after countably distributive forcing of size at most $\omega_1$, $\P$ is minimally subcomplete.
\end{lemma}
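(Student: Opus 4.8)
The plan is to transfer the subcompleteness of $\P$ in $\V$ to minimal subcompleteness of $\P$ in the extension $\V[g]$, where $g$ is generic for the countably distributive forcing $\Q$ of size at most $\omega_1$. Since $\Q$ is countably distributive it adds no reals and preserves $\omega_1$, so every countable transitive set of $\V[g]$ already lies in $\V$; and since $|\Q|\le\omega_1$ we may assume $\Q\sub\omega_1$. Fix $\theta$ verifying the subcompleteness of $\P$ in $\V$, with $\Q\in H_\theta$. Given $a,H\in\V[g]$, fix $\Q$-names $\dot a,\dot H$ and, in $\V$, a cardinal $\mu^*>\theta$ and $A^*\sub\mu^*$ with $N^*:=L_{\mu^*}[A^*]$ a model of $\ZFCm$, $H_\theta\sub N^*$, and $\Q,\dot a,\dot H\in N^*$, chosen large enough that $H\sub N^*[g]$. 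In $\V[g]$ let $N:=N^*[g]$; since $\Q\in N^*$ this is a $\ZFCm$ model of the form $L_{\mu^*}[A]$ with $A$ coding $A^*\oplus g$, it contains $a$, and $H\sub N$. This $N$ will witness minimal subcompleteness; it remains to see that $Z_{N,\P,a}$ contains a club.

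The club will be $Z=\{\omega_1\cap Y \st \P,\Q,g,\dot a\in Y\prec L_\nu[A],\ Y\text{ countable}\}$ for a sufficiently large $\nu$, computed in $\V[g]$; this is a club since $\omega_1^{\V[g]}=\omega_1^\V$. Fix $Y\in Z$ and set $X=Y\cap N$, $\alpha=\omega_1\cap X$. Mirroring the proof of Observation \ref{obs:MinimalIsWeaker}, the transitive collapse $N_X$ of $X$ has the form $N^*_X[\bar g]$, where $N^*_X:=L_{\bar\mu^*}[\bar A^*]$ is the collapse of $X\cap N^*$ and is full (with the collapse of the corresponding level $L_{\bar\nu}[\bar A]$ witnessing this), and where $\bar g=\sigma_X^{-1}(g)$ is $\bar\Q:=\sigma_X^{-1}(\Q)$-generic over $N^*_X$ by elementarity. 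Because $\Q\sub\omega_1$ and $\sigma_X\rest\alpha=\id$, we have $\bar\Q=\Q\cap\alpha$ and $\bar g=g\cap\alpha$; in particular $\bar g\sub\alpha$ and $\bar g\sub g$. Finally, $\sigma^*:=\sigma_X\rest N^*_X:N^*_X\prec N^*$ is elementary, with $\P,\theta\in\ran(\sigma^*)$. Note $N_X\in\V$, as it is countable and transitive.

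To see that $X$ elevates to $N^\P$, fix $c\in N_X$ and a $\bar\P$-generic $\bar G$ over $N_X$, and write $c=\dot c^{\bar g}$ for a $\bar\Q$-name $\dot c\in N^*_X$. Since $\bar G$ is in particular $\bar\P$-generic over $N^*_X$, I apply the subcompleteness of $\P$ in $\V$ to the model $N^*$, the embedding $\sigma^*$, the generic $\bar G$, and the parameter $\bar s=\dot c$, obtaining a condition $p\in\P$. Let $G$ be $\P$-generic over $\V[g]$ with $p\in G$; then $G$ is also $\P$-generic over $\V$, so in $\V[G]\sub\V[g][G]$ there is $\sigma'':N^*_X\prec N^*$ with $\sigma''(\dot c)=\sigma^*(\dot c)$ and $\sigma''``\bar G\sub G$. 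The crucial point is that $\sigma''$ lifts: any two elementary embeddings of the countable transitive model $N^*_X$ into $N^*$ agree below $\omega_1^{N^*_X}=\alpha$, so $\sigma''\rest\alpha=\sigma^*\rest\alpha=\id$, whence $\sigma''``\bar g=\bar g\sub g$. Therefore $\sigma''$ lifts to $\sigma':N_X=N^*_X[\bar g]\prec N^*[g]=N$ with $\sigma'(\bar g)=g$ and $\sigma'\supseteq\sigma''$. One checks $\sigma'``\bar G\sub G$ (as the conditions in $\bar G$ lie in $N^*_X$) and $\sigma'(c)=\sigma''(\dot c)^g=\sigma^*(\dot c)^g=\sigma_X(c)$. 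Thus $X$ elevates to $N^\P$, and since $\omega_1\cap X=\omega_1\cap Y$ ranges over the club $Z$, $Z_{N,\P,a}$ contains a club.

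I expect the main obstacle to be exactly the lifting step, i.e.\ arranging $\sigma''``\bar g\sub g$ so that the subcompleteness embedding $\sigma''$ coming from $\V$ extends to an embedding over $\V[g]$ realizing $\bar G$ into $G$. This is where both hypotheses are essential: countable distributivity guarantees that $N_X\in\V$ and that $\bar g=g\cap\alpha$, while $|\Q|\le\omega_1$ (via $\Q\sub\omega_1$) forces $\bar g\sub\alpha$, so that the general fact that elementary embeddings of a countable transitive structure agree below its $\omega_1$ pins $\sigma''$ down on $\bar g$. The remaining routine points — verifying the fullness of $N^*_X$ via the higher hull as in Observation \ref{obs:MinimalIsWeaker}, that $\mu^*$ remains a cardinal in $\V[g]$, and that $N$ is of the required form — I would handle as in the cited observation.
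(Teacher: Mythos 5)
Your proof is correct and follows essentially the same route as the paper's: countable distributivity puts the collapsed ground-coordinate structures, the hull traces and $\G$ back in $\V$, fullness of $N^*_X$ comes from the higher hull as in Observation \ref{obs:MinimalIsWeaker}, subcompleteness is applied in $\V$, and the resulting embedding is lifted through $\bar g\mapsto g$ using that $\bar g=g\cap\alpha$ lies below the critical point (the paper packages this last step as two lifts, by $\G$ and then by the collapsed $\Q$-generic, followed by a restriction, but it is the same embedding). One shared piece of fine print: the lifting step needs not only $\sigma''\,``\,\bar g\sub g$ but also $\sigma''(\bar\Q)=\Q$ (the order, not just the underlying set of conditions, must be mapped correctly), so the parameter handed to subcompleteness should be the pair $\langle \dot c,\bar\Q\rangle$ rather than $\dot c$ alone --- the paper's own proof glosses over the same point in its second lift, and the fix is immediate.
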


\begin{proof}
Let $\Q$ be countably distributive, and let $|\Q| \leq \omega_1$. Let $H \subseteq \Q$ be generic. Let's assume that the conditions in $\Q$ are countable ordinals, so that $H\sub\omega_1$.
To show that $\P$ is minimally subcomplete in $V[H]$, let $a=\dot{a}^H$, $\theta$ be given. In $\V[H]$, we have to find a transitive $N\models\ZFC^-$ with $H_\theta\sub N$, such that the set $Z_{N,\P,a}$ contains a club subset of $\omega_1$.

In $\V$, since $\P$ is subcomplete, we can pick a regular cardinal $\tau$ and an $A\sub\tau$ such that, letting $\mu=\tau^+$ and $\nu=\tau^{++}$, we have that $H_\theta\sub L_{\tau}[A]$, and whenever $Y$ is countable, $\P\in Y$ and $Y\prec L_\nu[A]$, it follows that $X=Y\cap L_{\mu}[A]$ elevates to $L_{\mu}[A]^\P$, because in this situation, $N_X$ is full, as in the proof of Observation \ref{obs:MinimalIsWeaker}.

In $\V[H]$, let $A'=A\oplus H=\{\goedel{\alpha,\beta}\st\alpha\in A\ \land\ \beta\in H\}$, where $\goedel{\alpha,\beta}$ is the G\"{o}del code of $\kla{\alpha,\beta}$. Note that $L_\tau[A']=L_\tau[A,H]=L_\tau[A][H]$. We claim that in $\V[H]$, whenever $Y'\prec L_\nu[A']$ is countable, with $\P,A,A',\Q,H\in Y'$, it follows that $X'=Y'\cap L_\mu[A']$ elevates to ${L_\mu[A']}^\P$.

To see this, fix such $Y'$ and $X'$, and
let $\bar{\tau},\bar{\mu},\bar{A},\bar{A}',\bar{\Q},\bar{H}$ be such that \[\sigma_{Y'}(\bar{\tau},\bar{\mu},\bar{A},\bar{A}',\bar{\Q},\bar{H})=\tau,\mu,A,A',\Q,H\] Then $N_{Y'}=L_{\bar{\nu}}[\bar{A}']$, for some cardinal $\bar{\nu}$, and by elementarity of $\sigma_{Y'}$, it follows that in $N_{Y'}$ it is the case that $\bar{H}$ is $\bar{\Q}$-generic over $L_{\bar{\mu}}[\bar{A}]$. Let $\sigma=\sigma_{Y'}\rest L_{\bar{\mu}}[\bar{A}]$. So $\sigma=\sigma_{X}$, where $X=X'\cap L_\mu[A]$, and $\sigma: L_{\bar{\mu}}[\bar{A}]\prec L_\mu[A]$ is elementary.

In $\V[H]$, let $\bar{G}$ be $\bar{\P}$-generic over $L_{\bar{\mu}}[\bar{A}']$, and let $c\in N_{X'}$ be given. There is then a $\dot{c}\in L_{\bar{\mu}}[\bar{A}]$ such that $c=\dot{c}^{\bar{H}}$. Let $Y=Y'\cap L_\nu[A]$, $X=X'\cap L_\mu[A]$. Since $\Q$ is countably distributive, it follows that $\bar{G},X,Y\in\V$, and $\bar{G}$ is $\bar{\P}$-generic over $L_{\bar{\mu}}[\bar{A}]$. Since $X=Y\cap L_{\mu}[A]$, it follows that $X$ elevates (in $\V$) to $L_\tau[A]^\P$, as $N_X$ is full. Hence, there is a condition $p\in\P$ that verifies this (with respect to $\dot{c}$). Thus, let $G$ be $\P$-generic over $\V[H]$ with $p\in G$. Then in $\V[G]$, there is a $\sigma':L_{\bar{\mu}}[\bar{A}]\prec L_\mu[A]$ with $\sigma'``\bar{G}\sub G$ and $\sigma'(\dot{c})=\sigma_X(\dot{c})$. So $\sigma'$ lifts to $\sigma^*:L_{\bar{\mu}}[\bar{A}][\bar{G}]\prec L_\mu[A][G]$.

Since $\bar{H}\in L_{\bar{\mu}}[\bar{A}']$ and $\bar{G}$ is $\bar{\P}$-generic over
$L_{\bar{\mu}}[\bar{A}']$, it follows by the product lemma that $\bar{H}$ is $\bar{\Q}$-generic over $L_{\bar{\mu}}[\bar{A}][\bar{G}]$, and since $\sigma^*$ doesn't move $\bar{H}$, as $\bar{H}\sub\omega_1^{L_{\bar{\mu}}[\bar{A}]}=\omega_1^{L_{\bar{\mu}}[\bar{A}][\bar{G}]}$, it follows that $\sigma^*$ lifts to
\[\sigma^{**}:
L_{\bar{\mu}}[\bar{A}][\bar{G}][\bar{H}]\prec L_\mu[A][G][H]\]
Noting that
\[L_{\bar{\mu}}[\bar{A}][\bar{G}][\bar{H}]=L_{\bar{\mu}}[\bar{A}][\bar{H}][\bar{G}]=L_{\bar{\mu}}[\bar{A}'][\bar{G}]\] and
\[L_\mu[A][G][H]=L_\mu[A][H][G]=L_\mu[A'][G]\]
we see that $\sigma^{**}\rest L_{\bar{\mu}}[\bar{A}']$ witnesses that $X'$ elevates to $L_\mu[A']^\P$ in $\V[H]$, since $\sigma^{**}(c)=\sigma^*(\dot(c)^{\bar{H}})=\sigma'(\dot{c}^{\bar{H}})=\sigma'(\dot{c})^H=\sigma_X(\dot{c})^H=\sigma_{X'}(c)$. This last equality holds because $\sigma_X=\sigma_{X'}\rest L_{\bar{\mu}}[\bA]$, since $X$ is transitive in $X'$, in the sense that if $d\in X$ and $e\in d\cap X'$, then $e\in X$. It follows that $\sigma_{X'}$ is the lift of $\sigma_X$ to $L_{\bar{\mu}}[\bar{A}']$, and hence that $\sigma_X(\dot{c})^H=\sigma_{X'}(c)$.

Thus, $H_\theta^{\V[G]}\sub L_\mu[A']$ and $Z_{L_\mu[A'],\P,a}$ contains a club, because in $\V[H]$, $\{Y'\cap\omega_1\st Y'\prec L_\nu[A']\}$ contains a club.
\end{proof}

\begin{remark} Slight variations of the proof of the previous lemma show the following.
\begin{enumerate}[label=(\arabic*)]
\item Minimal subcompleteness is preserved by countably distributive proper (that is, strongly proper) forcing of size $\omega_1$. In a sense, the modified proof is somewhat easier than the original one.
\item The following slightly strengthened version of minimal subcompleteness of a forcing $\P$, which is still weaker than subcompleteness, is preserved by countably distributive forcing of size $\omega_1$: for any set $a$, there is a $\tau$ and an $A\sub\tau$, such that, letting $\mu=\tau^+$ and $\nu=\tau^{++}$, we have that $a,\P\in L_{\mu}[A]$, and for every countable $Y\prec L_\nu[A]$, $Y\cap L_\mu[A]$ elevates to $L_{\mu}[A]^\P$.
\end{enumerate}
\end{remark}

To formulate a corollary to the previous lemma, recall that, given a cardinal $\mu$, Jensen introduced a version of subcompleteness called \emph{subcompleteness above $\mu$,} which requires the elevated embedding to coincide with the originally given embedding up to the preimage of $\mu$. There is a natural version of minimal subcompleteness above $\mu$, which we make precise presently.

\begin{definition}
\label{def:MinimalSCabovemu}
Let $\mu$ be a cardinal, $N$ a transitive model of $\ZFC^-$, $\P$ a forcing notion and $X\prec N$ countable with $\mu,\P\in X$. Then $X$ \emph{elevates to $N^\P$ above $\mu$} if for every $\bar{G}$ which is generic over $N_X$ for $\sigma_X^{-1}(\P)$, and for every $c\in N_X$, there is a condition $p\in\P$ such that whenever $G$ is generic over $\V$ for $\P$, then in $\V[G]$, there is an elementary $\sigma':N_X\prec N$ with $(\sigma')``\bar{G}\sub G$, $\sigma'(c)=\sigma_X(c)$ and $\sigma'\rest\bar{\mu}=\sigma_X\rest\bar{\mu}$, where $\bar{\mu}=\sigma_X^{-1}(\mu)$.
\end{definition}

The proof of Lemma \ref{lemma:MinimalPartOfSubcompletenessIsPreserved} then shows the following.

\begin{corollary}
\label{lemma:MinimalPartOfSubcompletenessAbovemuIsPreserved}
Let $\P$ be subcomplete above $\mu$. Then after countably distributive forcing of size at most $\mu$, $\P$ is minimally subcomplete above $\mu$.
\end{corollary}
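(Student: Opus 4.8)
The plan is to rerun the proof of Lemma~\ref{lemma:MinimalPartOfSubcompletenessIsPreserved} almost verbatim, with the parameter $\mu$ taking over the two roles that $\omega_1$ played there: it bounds $|\Q|$, and it is the level up to which the elevating embeddings must agree. To keep the notation unambiguous I reserve $\mu$ for the ``above $\mu$'' parameter and write the internal cardinals of that proof as $\tau^+$ and $\tau^{++}$. So let $\Q$ be countably distributive with $|\Q|\le\mu$; as before I may assume its conditions are ordinals $<\mu$, so that the generic $H$ satisfies $H\sub\mu$. Given $a=\dot a^H$ and $\theta$ in $\V[H]$, I work first in $\V$ and, using that $\P$ is subcomplete \emph{above} $\mu$, choose a regular $\tau>\mu,\theta$ and $A\sub\tau$ with $H_\theta\sub L_\tau[A]$ such that, whenever $Y$ is countable with $\mu,\P\in Y\prec L_{\tau^{++}}[A]$, the set $X=Y\cap L_{\tau^+}[A]$ elevates to $L_{\tau^+}[A]^\P$ above $\mu$; as in Observation~\ref{obs:MinimalIsWeaker} this holds because $N_X$ is full. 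The single new feature over the lemma is that the elevating $\sigma'$ now additionally satisfies $\sigma'\rest\mu_0=\sigma_X\rest\mu_0$, where $\mu_0=\sigma_X^{-1}(\mu)$.

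Then, exactly as in the lemma, I pass to $\V[H]$, set $A'=A\oplus H$, and aim to show that for countable $Y'\prec L_{\tau^{++}}[A']$ containing $\mu,\P,A,A',\Q,H$, the set $X'=Y'\cap L_{\tau^+}[A']$ elevates to $L_{\tau^+}[A']^\P$ above $\mu$. The collapse computation, the use of countable distributivity to place $\bar G$, $X$ and $Y$ back in $\V$, and the first lift of $\sigma'$ to $\sigma^*$ over $\bar G$ are all unchanged. The step that genuinely invokes the ``above $\mu$'' hypothesis is the second lift, over $\bar H$.

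Here is where $\mu$ replaces $\omega_1$. Since $H\sub\mu$, the collapse $\bar H=\sigma_{X'}^{-1}(H)$ is a subset of $\mu_0$, and for $\xi\in\bar H$ the agreement gives $\sigma^*(\xi)=\sigma'(\xi)=\sigma_X(\xi)=\sigma_{X'}(\xi)$, the last equality because $\sigma_X=\sigma_{X'}\rest L_{\tau^+}[\bar A]$ and the two collapses agree on $\mu_0$. Hence $\sigma^*``\bar H\sub\sigma_{X'}(\bar H)=H$, which is exactly the hypothesis needed to lift $\sigma^*$ over $\bar H$ to $\sigma^{**}\colon L_{\tau^+}[\bar A][\bar G][\bar H]\prec L_{\tau^+}[A][G][H]$; this is the analogue of the lemma's use of $\bar H\sub\omega_1^{N_X}$ together with $\sigma'\rest\omega_1^{N_X}=\id$. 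Rewriting the iterated extensions as $L_{\tau^+}[\bar A'][\bar G]$ and $L_{\tau^+}[A'][G]$ as before, $\sigma^{**}\rest L_{\tau^+}[\bar A']$ is the desired elevation of $X'$.

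Finally I must check it is an elevation \emph{above} $\mu$. Since $\mu\in X'$, its collapse $\sigma_{X'}^{-1}(\mu)$ equals $\mu_0$, and neither lift alters $\sigma'$ on ordinals already in its domain, so $\sigma^{**}\rest\mu_0=\sigma'\rest\mu_0=\sigma_X\rest\mu_0=\sigma_{X'}\rest\mu_0$, which is precisely the extra clause of Definition~\ref{def:MinimalSCabovemu}. As in the lemma, $Z_{L_{\tau^+}[A'],\P,a}$ then contains a club because $\{Y'\cap\omega_1\st Y'\prec L_{\tau^{++}}[A']\}$ does. I expect the only real obstacle to be this last bookkeeping: one must confirm that the above-$\mu$ agreement proved in $\V$ survives both lifts intact and is measured against $\sigma_{X'}$ rather than merely $\sigma_X$. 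Everything else is a transcription of Lemma~\ref{lemma:MinimalPartOfSubcompletenessIsPreserved} with $\mu$ in place of $\omega_1$.
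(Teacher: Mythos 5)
Your proposal is correct and is essentially the paper's own argument: the paper proves this corollary simply by noting that the proof of Lemma~\ref{lemma:MinimalPartOfSubcompletenessIsPreserved} goes through with $\mu$ in the two roles played by $\omega_1$, which is exactly the transcription you carry out. You also correctly isolate the one point where the ``above $\mu$'' hypothesis is genuinely needed, namely that $\sigma^*{}``\bar H\sub H$ (no longer automatic once $\bar H$ is not below the critical point), which licenses the second lift and yields the agreement $\sigma^{**}\rest\mu_0=\sigma_{X'}\rest\mu_0$ required by Definition~\ref{def:MinimalSCabovemu}.
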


\section{Minimal subcompleteness and the preservation of properties of $\omega_1$-trees}
\label{sec:SCandPropertiesOfOmega1Trees}

Countably closed forcing does not add cofinal branches through $\omega_1$-trees, so it is natural to wonder whether other subcomplete forcing cannot do this either. Indeed, we see below that this is true of minimally subcomplete forcing as well. The proof for subcomplete forcing is given in \cite{Kaethe:Diss}.
Let's begin by establishing some terminology on trees.

\begin{definition}  A \emph{tree} is a partial order $T=\langle|T|, <_T \rangle$ in which the predecessors of any member of $|T$ are well-ordered by $<_T$ and there is a unique minimal element called the \emph{root}.
\begin{itemize}
	\item The members of $|T|$ are called the \emph{nodes} of $T$, and we will tend to conflate the tree $T$ with its underlying set $|T|$.
	\item The \emph{height of a node} $t \in T$ is the order type of the set of its predecessors under the restriction of the tree order. We write $T_\alpha$ for the $\alpha$th level of $T$, the set of nodes having height $\alpha$. The \emph{height of a tree} $T$, $\height(T)$, is the strict supremum of the heights of its nodes.
	\item We write $T\rest \alpha$ for the subtree of $T$ of nodes having height less than $\alpha$. An \emph{$\omega_1$-tree} is a normal tree of height $\omega_1$ where all levels are countable. A tree of height $\omega_1$ is \emph{normal} if every node has (at least) two immediate successors, nodes on limit levels are uniquely determined by their sets of predecessors, and every node has successors on all higher levels up to $\omega_1$.
	\item We write $T_t$ to denote the subtree of $T$ consisting of the nodes $s \in T$ with $s \geq_T t$. For nodes $t \in T$, by $\Succ_T(t)$ we mean the set of successors $s \geq_T t$ in the tree.
	\item A \emph{branch} $b$ in $T$ is a maximal linearly ordered subset of $T$, and the length of the branch is its order type. For $\alpha$ less than the length of $b$, we write $b(\alpha)$ for the node in $b$ that has height $\alpha$. We write $[T]$ for the set of \emph{cofinal branches} of $T$, that is, those branches containing nodes on every nonempty level of $T$. If $t \in T$ is a node, then we write $b_t$ to mean the ``branch" below $t$: $b_t = \set{ s \in T}{ s<_T t}$.
	\item An $\omega_1$-tree is an \emph{Aronszajn} tree if it has no cofinal branches. Two nodes $t$ and $s$ in $T$ are \emph{compatible}, written $s \parallel t$, if there is $r \in T$ such that $r \geq_T t$ and $r \geq_T s$. This is the same as demanding that either $s <_T t$, $s>_T t$, or $s=t$, or, in other words, that $s$ and $t$ are \textit{comparable}. Otherwise, they are incompatible, written $s \perp t$. An \emph{antichain} in a tree is a set of pairwise incompatible elements. A \emph{Suslin tree} is an $\omega_1$-tree with no uncountable antichain. When forcing with a tree, we reverse the order, so that stronger conditions are higher up in the tree. Consequently, Suslin trees are $ccc$ as notions of forcing. A \emph{Kurepa tree} is an $\omega_1$-tree with at least $\omega_2$-many cofinal branches.
\end{itemize}
\end{definition}

\begin{lemma}
\label{lemma:scbranch}
Let $T$ be an $\omega_1$-tree. If $\P$ is minimally subcomplete and $G$ is $\P$-generic then $[T]=[T]^{V[G]}$.
\end{lemma}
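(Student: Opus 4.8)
The plan is to show that a minimally subcomplete forcing $\P$ cannot add a cofinal branch to an $\omega_1$-tree $T$, since obviously $[T]\sub[T]^{\V[G]}$, and (cofinal) branches are not destroyed by forcing. So suppose toward a contradiction that some condition $q\in\P$ forces that $\dot b$ is a cofinal branch of $\check T$ that is not in the ground model. Following the template of the proof of Fact \ref{fact:MinimalSCenoughForPreservationProperties}, I would set $a=\kla{q,\dot b,T}$ and apply minimal subcompleteness to obtain a transitive $N\models\ZFC^-$ with $H_\theta\sub N$ for $\theta$ large enough that the relevant forcing statement (and the fact that $T$ is an $\omega_1$-tree with all its levels) reflects into $N$, and such that $Z_{N,\P,a}$ contains a club. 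Pick any $\alpha=\omega_1\cap X\in Z_{N,\P,a}$ with $a\in X\prec N$ countable and $X$ elevating to $N^\P$.

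Next I would collapse, setting $\sigma_X:N_X\prec N$ the inverse Mostowski collapse, and $\bar a=\sigma_X^{-1}(a)=\kla{\bar q,\dot{\bar b},\bar T}$, $\bar\P=\sigma_X^{-1}(\P)$, $c=\kla{\bar a,\bar\P}$. The key point is that since $\alpha=\omega_1^{N_X}$ and $T$ is a tree with countable levels, elementarity gives $\sigma_X(\bar T)=T$ and $\bar T=T\rest\alpha$ (i.e.\ $\sigma_X\rest\bar T=\id$, because $T\rest\alpha$ is transitive under the collapse up to its nodes which are coded below $\alpha$); moreover in $N_X$, $\bar q$ forces that $\dot{\bar b}$ is a cofinal branch through $\bar T=\check{T\rest\alpha}$, of length $\alpha$. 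Now fix $\bar G$ which is $\bar\P$-generic over $N_X$ with $\bar q\in\bar G$, and let $\bar b=\dot{\bar b}^{\bar G}$, a cofinal branch through $T\rest\alpha$ lying in $N_X[\bar G]$. Since $X$ elevates to $N^\P$, there is $p\in\P$ such that if $G$ is $\P$-generic over $\V$ with $p\in G$, then in $\V[G]$ there is $\sigma':N_X\prec N$ with $\sigma'``\bar G\sub G$ and $\sigma'(c)=\sigma_X(c)$; this lifts to $\sigma^*:N_X[\bar G]\prec N[G]$.

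The heart of the argument is then identifying the branch. Because $q=\sigma'(\bar q)\in G$, in $\V[G]$ the object $b=\dot b^G$ is a new cofinal branch through $T$; in particular $b\rest\alpha=\sigma^*(\bar b)\cap T\rest\alpha$. Since $\sigma^*$ fixes everything below $\alpha$ (its critical point is $\alpha=\omega_1^{N_X}$), and every node of $b\rest\alpha$ has height ${<}\alpha$ hence is fixed, we get $b\rest\alpha=\bar b\in\V$. But $T\rest\alpha$ has a top node picking out the branch: since $T$ is normal and $b$ is cofinal, $b(\alpha)$ is a node of $T$ on level $\alpha$, and $b\rest\alpha=b_{b(\alpha)}$ is exactly the set of predecessors of $b(\alpha)$, which is an element of $\V$ computable from $b(\alpha)\in T\in\V$. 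This shows $\bar b=b\rest\alpha$ is determined, in $\V$, by a single node of $T$; but then the branch $\bar b$, being the predecessor-set of a ground-model node, cannot code any information that would distinguish $b$ from a ground-model branch below level $\alpha$, and running this at cofinally many (club-many) $\alpha$ forces $b$ itself to be in $\V$, contradicting that $\dot b$ was forced to be new.

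The main obstacle I anticipate is making the last step fully rigorous: the naive argument only shows each initial segment $b\rest\alpha$ is in $\V$, which is automatic for any cofinal branch and does \emph{not} by itself contradict newness of $b$. The real content must come from using that $T$ is an \emph{$\omega_1$-tree of width $\omega_1$} together with the agreement $C^N_\delta$-style condition packaged into elevation, or more simply from the observation that the embedding $\sigma^*$ sends $\bar b$ (a branch of $\bar T=T\rest\alpha$ with no top node in $N_X[\bar G]$, since it is cofinal there) to a branch of $T$ in $N[G]$ whose restriction to level $\alpha$ is $\bar b$; the node $\sigma^*(\text{``}\bar b\text{''})$ on level $\alpha$ of $T$ is thus forced, and since $N_X[\bar G]$ already ``knows'' $\bar b$ via a name in $\V$, the branch $b$ agrees with a $\V$-definable branch on a club of levels, whence $b\in\V$. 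I would therefore focus the hard work on showing that the generic branch is pinned down by the collapsed data $\bar b=\dot{\bar b}^{\bar G}$ lying in $\V$ (which holds because $\bar G\in\V$ by $\ref{item:NoNewReals}$ applied in the collapsed model, as in Lemma \ref{lemma:MinimalPartOfSubcompletenessIsPreserved}), so that $b$, agreeing with $\bar b$ below $\alpha$ for club-many $\alpha$, is forced to equal a ground-model branch.
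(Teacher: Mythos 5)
Your setup (applying minimal subcompleteness to get $N$ and a witness $X$, collapsing, elevating, lifting to $\sigma^*:N_X[\bar{G}]\prec N[G]$, and using that the critical point $\alpha$ fixes $T\rest\alpha$) coincides with the paper's, but the heart of the proof is missing, and you in fact identify the gap yourself: with $\bar{G}$ an \emph{arbitrary} generic over $N_X$ containing $\bar{q}$, no contradiction can be derived. The branch $\bar{b}=\dot{\bar{b}}^{\bar{G}}$ may perfectly well be the predecessor set $b_t$ of some node $t$ on level $\alpha$ of $T$; in that case the elevated embedding produces a completely consistent picture in which $b=\dot{b}^G$ passes through $t$, and nothing fails. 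The repairs you sketch do not close the gap: first, ``running this at club-many $\alpha$'' is incoherent, because the condition $p$ produced by elevation depends on $\alpha$ and on the chosen $\bar{G}$, so a single generic $G$ cannot be assumed to witness the agreement at more than one $\alpha$; second, the agreement condition $C^N_\delta(\ran(\sigma'))=C^N_\delta(\ran(\sigma))$ belongs to full subcompleteness (Definition \ref{def:subcompleteness}) and was deliberately stripped out of the notion of elevation (Definition \ref{def:MinimalSC}), so it is not available for minimally subcomplete forcing; and third, as you yourself note, knowing that every initial segment of $b$ lies in $\V$ is vacuous. (Also, an $\omega_1$-tree has \emph{countable} levels; width $\omega_1$ is the subject of the later sections, and the present lemma genuinely needs countable levels.)

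The missing idea is a diagonalization built into the construction of $\bar{G}$, carried out by hand in $\V$. Since $T$ is an $\omega_1$-tree, level $\alpha$ of $T$ is countable; enumerate the downward closures of its nodes as $\seq{b_n}{n<\omega}$. Since $N_X$ is countable, enumerate the dense subsets of $\bar{\P}$ lying in $N_X$ as $\seq{\bar{D}_n}{n<\omega}$. Build a descending chain $\bar{p}_0=\bar{p}\ge\bar{p}_1\ge\cdots$ as follows: given $\bar{p}_n$, use the fact that $\bar{p}$ forces $\dot{\bar{b}}$ to be \emph{new} to find two incompatible extensions of $\bar{p}_n$ deciding membership of some node $\check{x}_n$ in $\dot{\bar{b}}$ oppositely; choose the one whose decision disagrees with $b_n$, and extend it into $\bar{D}_n$ to obtain $\bar{p}_{n+1}$. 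The filter $\bar{G}$ generated by $\seq{\bar{p}_n}{n<\omega}$ is then $\bar{\P}$-generic over $N_X$, contains $\bar{p}$, and realizes a branch $\bar{b}=\dot{\bar{b}}^{\bar{G}}$ with $\bar{b}\neq b_n$ for every $n$. Now your own elevation-and-lifting argument finishes the proof: in $N[G]$, $b=\sigma^*(\bar{b})$ is a cofinal branch of $T$, so it has a node $b(\alpha)$ on level $\alpha$, whence $b\rest\alpha$ is the downward closure of $b(\alpha)$, i.e.\ equal to some $b_n$; but $b\rest\alpha=\bar{b}$ since $\alpha$ is the critical point of $\sigma^*$, contradicting $\bar{b}\neq b_n$. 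Note that only a single $\alpha\in Z_{N,\P,a}$ is needed, and that the newness of $\dot{b}$ is consumed inside the construction of $\bar{G}$, not at the end of the argument.
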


\begin{proof}
Assume not. Let $\dot b$ be a name for a new cofinal branch through $T \subseteq H_{\omega_1}$; let $p\in\P$ be a condition forcing that $\dot b$ is a new cofinal branch through $\check T$.
Let $N$ be a transitive model of $\ZFC^-$ with $\P,\dot{b},p\in N$ and $H_\theta\sub N$, such that $Z=Z_{N,\P,\kla{\dot{b},p}}$ contains a club, where $\theta$ is large enough to ensure that inside $N$, $p$ forces that $\dot b$ is a new cofinal branch through $\check T$. Let $\alpha\in Z$, and let $X$ witness this. Let $\alpha=\omega_1\cap X$, $\N=N_X$ and $\sigma=\sigma_X$. As usual, let $\bar{p},\bar{\P},\dot{\bar{b}}=\sigma^{-1}(p,\P,\dot{b})$.

By elementarity, we have that $\overline p$ forces $\overline{\dot b}$ to be a new cofinal branch over $\N$. As we construct a generic $\G$ for $\overline{\P}$ over $\N$, we will use the countability of $\N$ to diagonalize against all ``branches" as seen on level $\alpha$ of the tree $T$ in $N$, thereby obtaining a contradiction.


Toward this end, enumerate the dense sets $\seq{\overline D_n}{ n < \omega}$ of $\overline{\P}$ that belong to $\N$. Also denote the sequence of downward closures of nodes on level $\alpha$ of $T$, the ``branches" through $T\rest \alpha$ that extend to have nodes of higher height in $T$, as $\seq{b_n}{n < \omega}$. Now define a sequence of conditions of the form $\overline p_n$ for $n < \omega$ that decide values of $\overline{\dot b}$ in $\overline T$ differently from $b_n$. Ensure along the way that for all $n$,
\begin{itemize}
	\item $\overline p_{n+1} \in \overline D_n$ and
	\item $\overline p_{n+1} \leq \overline p_n$.
\end{itemize}
	
The construction (in $V$) may go as follows:

Let $\overline p_0 := \overline p \in \N$. For each $n < \omega$, note that there must be two conditions $\overline p^0_{n+1} \perp \overline p^1_{n+1}$, both extending $\overline p_n$, that decide the value of the branch $\overline{\dot b}$ to differ on some value. This always has to be possible since these conditions always extend $\overline p$, that forces $\overline{\dot b}$ to be new. Say $\overline p^1_{n+1} \forces \check x_{n} \in \overline{\dot b}$ and $\overline p^0_{n+1} \forces \check x_n \notin \overline{\dot b}$. Let $\overline p_{n+1}$ be a condition in $\overline D_n$ extending $\overline p^1_{n+1}$ if $x_n \notin b_n$, or a condition in $\overline D_n$ extending $\overline p^0_{n+1}$ otherwise.

Let $\G$ be the generic filter generated by the $\seq{ \overline p_n}{ n < \omega}$, let $\overline{\dot b}^{\G} = \overline b$. Since $\P$ is minimally subcomplete, there is a condition $q \in \P$ such that whenever $G$ is $\P$-generic with $q \in G$, by minimal subcompleteness we have $\sigma' \in V[G]$ such that: \begin{itemize}
	\item $\sigma': \N \prec N$
	\item $\sigma'(\overline \theta, \overline{\P}, \overline T, \overline p, \overline{\dot b}) = \theta, \P, T, p, \dot b$
	\item $\sigma'``\, \G \subseteq G$.
\end{itemize}
So below $q$ there is a lift $\sigma^*:\N[\G] \prec N[G]$ extending $\sigma'$ with $\sigma^*(\overline b) = \sigma'(\overline{\dot b})^G=\dot b^G = b$, and $\sigma^*(\overline T) = \sigma'(\overline T)^G=T$. The point is that since $\overline p \in \G$, we have $N[G] \models p \in G$, so $b$ is a branch through $T$.

Furthermore, $\alpha$ is the critical point of the embedding $\sigma^*$. So below $\alpha$ the tree $T$, and thus the branch $b$, is fixed. In particular, in $N[G]$, $b \rest \alpha = \overline b$. However, $\overline b$ was constructed so as to not be equal to any of the $b_n$s, so it cannot be extended to become a branch through $T$, since it can't have a node on the $\alpha$th level.
%
%
This is a contradiction.
\end{proof}

So in particular, minimally subcomplete forcing preserves Aronszajn trees. The following theorem shows that after forcing with a minimally subcomplete forcing, not only are there no new cofinal branches added to a Suslin tree $T$, but no uncountable antichains either. The proof is exactly the same as is given by Jensen \cite[Chapter 3 p.~10]{Jensen:FAandCH}.

\begin{lemma}
\label{lemma:Suslinpres}
Minimally subcomplete forcing preserves Suslin trees.
\end{lemma}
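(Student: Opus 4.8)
The plan is to show that a minimally subcomplete forcing $\P$ preserves the Suslinity of an $\omega_1$-tree $T$. Recall that $T$ is Suslin iff it is an $\omega_1$-tree with no uncountable antichain; by Lemma \ref{lemma:scbranch} we already know that $\P$ adds no new cofinal branches and preserves the property of being an $\omega_1$-tree (levels stay countable since no reals are added, and height $\omega_1$ is preserved since $\omega_1$ is preserved). So the only thing left to verify is that $\P$ does not add an uncountable antichain through $T$. I would argue by contradiction, mirroring the diagonalization used in Lemma \ref{lemma:scbranch}.

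So suppose some condition $p$ forces that $\dot A$ is an uncountable antichain through $\check T$, where $T\sub H_{\omega_1}$. Set $a=\kla{\dot A,p}$ and, by minimal subcompleteness, fix a transitive $\ZFC^-$ model $N=L^B_\tau$ with $\P,\dot A,p\in N$ and $H_\theta\sub N$ for $\theta$ large enough that $N$ sees that $p$ forces $\dot A$ to be an uncountable antichain, such that $Z=Z_{N,\P,a}$ contains a club. Pick $\alpha\in Z$ with witness $X\prec N$ countable, $\alpha=\omega_1\cap X$, and write $\bN=N_X$, $\sigma=\sigma_X$, and $\bar p,\bar\P,\dot{\bar A}=\sigma^{-1}(p,\P,\dot A)$. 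By elementarity, $\bar p$ forces over $\bN$ that $\dot{\bar A}$ is an uncountable antichain through $\bar T=\sigma^{-1}(T)=T\rest\alpha$.

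The heart of the argument is the diagonal construction of a generic $\G$ for $\bar\P$ over $\bN$, carried out in $\V$ using the countability of $\bN$. I would enumerate the dense sets $\seq{\bar D_n}{n<\omega}$ of $\bar\P$ lying in $\bN$ and build a descending chain $\seq{\bar p_n}{n<\omega}$ with $\bar p_0=\bar p$, $\bar p_{n+1}\in\bar D_n$, which in addition forces more and more nodes of $T\rest\alpha$ to lie \emph{outside} the realized antichain. Concretely, since $\bar p$ forces $\dot{\bar A}$ to be uncountable, hence cofinal below $\alpha$ in height, at each stage one can extend $\bar p_n$ to decide that the $n$th node of $T\rest\alpha$ (in a fixed enumeration) is not in $\dot{\bar A}$: if some extension puts it out, take that; otherwise every extension forces it in, and then elementarity together with uncountability of $\dot{\bar A}$ lets us find an incomparable node at higher height that can be forced out, keeping the antichain growing past $\alpha$. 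Letting $\bar A=\dot{\bar A}^{\G}$, the construction guarantees $\bar A\cap(T\rest\alpha)$ misses a cofinal set of heights below $\alpha$, so $\bar A$ has no member on ``level $\alpha$'' in the sense relevant below.

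Finally, minimal subcompleteness yields a condition $q$ such that for any $\P$-generic $G\ni q$ there is $\sigma':\bN\prec N$ in $\V[G]$ with $\sigma'(\bar p,\bar\P,\bar T,\dot{\bar A})=p,\P,T,\dot A$ and $\sigma'``\G\sub G$; this lifts to $\sigma^*:\bN[\G]\prec N[G]$ with critical point $\alpha$, sending $\bar A$ to $A=\dot A^G$ and fixing $T\rest\alpha$ pointwise. Since $\bar p\in\G$ forces $q\in G$ to see that $p\in G$, in $N[G]$ the set $A$ is an uncountable antichain through $T$, and by elementarity and $\crit(\sigma^*)=\alpha$ we get $A\cap(T\rest\alpha)=\bar A$. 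But an uncountable antichain in $N[G]$ must, by the pigeonhole/normality of $T$ inside the model, be ``sealed'' by level $\alpha$ for club-many $\alpha$, so for a correct choice of $\alpha\in Z$ every node of $A$ on level $\geq\alpha$ lies above some node of $\bar A$ below $\alpha$ --- contradicting that $\bar A$ was diagonalized to leave cofinally many heights free. The main obstacle I expect is exactly this last bookkeeping: arranging the enumeration and the ``freeing'' step so that the diagonalized $\bar A$ provably cannot be the restriction of a genuine uncountable antichain, which requires choosing $\alpha$ in the club $Z$ to also be a point where $N$ reflects the antichain-sealing property of $T$. Since the paper remarks that this follows exactly Jensen's argument in \cite{Jensen:FAandCH}, I would lean on the same sealing lemma to close the gap.
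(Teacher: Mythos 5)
There is a genuine gap here: your diagonalization runs in the wrong direction, and the final contradiction is a non sequitur. Forcing nodes of $T\rest\alpha$ \emph{out} of $\dot{\bar A}$ yields no usable property of $\bar A=\dot{\bar A}^{\G}$: trivially $\bar A\sub T\rest\alpha$, so ``no member on level $\alpha$'' is vacuous, and an uncountable antichain may perfectly well miss cofinally many heights below $\alpha$, so nothing you arranged prevents $\bar A$ from being $A\cap(T\rest\alpha)$ for an uncountable antichain $A$ of $T$ in $N[G]$. Worse, the sealing claim you invoke to finish is both unavailable and circular: sealing requires $A$ to be \emph{maximal}, which you never arrange (one must first extend the uncountable antichain to a maximal one, as the paper does), and the club of levels that seal $A=\dot A^G$ is computed in $\V[G]$ and depends on $G$, whereas your $\alpha\in Z$ must be fixed in $\V$ before $G$ exists. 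The entire content of the lemma is precisely to \emph{prove} that this particular, pre-chosen $\alpha$ seals $\bar A$; it cannot be cited as an automatic ``pigeonhole/club'' fact, and your construction, if anything, is engineered to sabotage sealing rather than establish it.

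The mechanism that actually closes this gap --- the paper's, following Jensen --- is absent from your proposal: one takes $\G$ generic not merely over $\bN$ but over an auxiliary countable $\ZFC^-$ model $M$ with $\bN,\,T\rest(\alpha+1)\in M$. Then for each $t\in T_\alpha$, the branch $b_t$ is $\bar T$-generic over $\bN$ (the standard Suslin-tree fact), $\G$ is $\bar\P$-generic over $\bN[b_t]\sub M$, and the product lemma makes $b_t$ generic over $\bN[\G]$; hence $b_t$ meets the maximal antichain $\bar A\in\bN[\G]$, i.e.\ $\bar A$ is sealed at level $\alpha$. Since $\bar A\sub A$ and $A$ is an antichain, $A$ has no nodes at level $\ge\alpha$, so $A$ is countable in $N[G]$, contradicting that $p\in G$ forces $\dot A$ to have size $\omega_1$. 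If you insist on a diagonalization in the style of Lemma \ref{lemma:scbranch}, it must go the opposite way from yours: enumerate $T_\alpha=\{t_n\st n<\omega\}$ and at stage $n$ extend $\bar p_n$ to force some node of $b_{t_n}$ \emph{into} $\dot{\bar A}$. This is possible because the set of nodes that some extension of $\bar p_n$ forces into $\dot{\bar A}$ is pre-dense in $\bar T$ (by the forced maximality of $\dot{\bar A}$), and $b_{t_n}$ is $\bar T$-generic over $\bN$, so it meets that set; the resulting $\bar A$ is then sealed at $\alpha$ by construction. That variant does work, and even avoids the auxiliary model $M$, but it is essentially the reverse of what you wrote.
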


\begin{proof}
Let $T$ be a Suslin tree. Let $\P$ be minimally subcomplete. Suppose toward a contradiction that $p \in \P$ forces that $\dot A$ is a maximal antichain of size $\omega_1$.
Let $N$ be a transitive model of $\ZFC^-$ with $p,\P,\dot{A}\in N$, and with $H_\theta\sub N$, where $\theta$ is large enough that $N$ is sufficiently correct about what $p$ forces with respect to $\P$, such that $Z_{N,\P,\kla{p,\P,\dot{A}}}\neq\emptyset$. Let $X\prec N$ with $p,\P,\dot{A}\in X$ such that $X$ elevates to $N^\P$. Let $\sigma=\sigma_X$, $\N=N_X$.

Letting $\alpha = \omega_1^{\N}$, we have that $\overline T = T\rest \alpha$ as usual. Let $M$ be a countable, transitive $\ZFC^-$ model with both $\N, T\rest (\alpha+1) \in M$. Let $\G \subseteq \overline{\P}$ be generic over $M$ with $\overline p \in \G$. So $\G$ is also generic over $\N$.

We can now work below a condition in $G \subseteq \P$ generic to obtain a $\sigma' \in V[G]$ such that: \begin{itemize}
	\item $\sigma': \N \prec N$
	\item $\sigma'(\overline \theta, \overline{\P}, \overline T, \overline p, \overline{\dot A}) = \theta, \P, T, p, \dot A$
	\item $\sigma'``\, \G \subseteq G$.
\end{itemize}
As usual we have a lift $\sigma^*:\N[\G] \prec N[G]$. Letting $\overline A=\overline{\dot A}^{\G}$ and $\dot A^G=A$ we have that $\sigma^*(\overline A)=A$. Let $\seq{b_t}{t \in T_\alpha}$ be the collection of partial branches below the nodes of level $\alpha$ of the tree $T$.

Every node in $T$ above level $\alpha$ has to have a predecessor in level $\alpha$. For each $t \in T_\alpha$, $\G$ is $\overline{\P}$-generic over $\N[b_t]$ since $b_t$ is $\overline T$-generic over $\N$ - as cofinal branches through Suslin trees are generic. By the product lemma, each $b_t$ is $\overline T$-generic over $\N[\G]$. Since $\overline A$ is maximal, $b_t \cap \overline A \neq \emptyset$.
Thus $\overline A$ is sealed in $\overline T = T \rest \alpha$, meaning it has no elements above level $\alpha$. But since $\overline A \subseteq A$ and $A$ is maximal, this means that $A$ is countable, so $T$ remains Suslin as desired.
\end{proof}


The following rigidity properties were introduced in \cite{FuchsHamkins:DegreesOfRigidity}, where it was shown, among other things, that Suslin trees exhibiting these properties can be constructed, assuming the $\diamondsuit$ principle holds.

\begin{definition}
A normal $\omega_1$-tree $T$ has the \emph{unique branch property} (is $\ubp$) so long as $$\mathbbm 1 \forces_T `` \check T \text{ has exactly one new cofinal branch.}"$$ That is, after forcing with the tree, $T$ has exactly one cofinal branch that was not in the ground model. We say that $T$ has the \emph{$n$-fold $\ubp$} so long as forcing with $T^n$ adds exactly $n$ branches.

A Suslin tree is \emph{Suslin off the generic branch} so long as after forcing with $T$ to add a generic branch $b$, for any node $t$ not in $b$, the tree $T_t$ remains Suslin. Let $n$ be a natural number. A Suslin tree $T$ is \emph{$n$-fold Suslin off the generic branch} so long as after forcing with with the tree $n$ times, or forcing with $T^n$ that adds $n$ branches $b_1, \dots, b_n$, $T_p$ remains Suslin for any $p$ not on any $b_i$.
\end{definition}

Combining the results from Sections \ref{sec:FragmentsOfSubcompleteness} and \ref{sec:SCandPropertiesOfOmega1Trees}, we can conclude that these strong rigidity properties of Suslin trees are preserved by subcomplete forcing.

\begin{theorem} The following properties of an $\omega_1$-tree $T$ are preserved by subcomplete forcing:
\begin{enumerate}[label=(\arabic*)]
	\item \label{item:TisAronszajn} $T$ is Aronszajn
	\item \label{item:TisntKurepa} $T$ is not Kurepa
	\item \label{item:TisSuslin} $T$ is Suslin
	\item \label{item:TisSuslinandUBP} $T$ is Suslin and $\ubp$
	\item \label{item:TisSOB} $T$ is $\sob$
	\item \label{item:TisNSOB} $T$ is $n$-fold $\sob$ (for $n\ge 2$)
	\item \label{item:TisSuslinandNUBP} $T$ is $(n-1)$-fold Suslin off the generic branch and $n$-fold $\ubp$ (for $n\ge 2$)
\end{enumerate}
\end{theorem}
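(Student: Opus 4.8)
The plan is to reduce the theorem to the two-step structure suggested by the paper's organization: first observe that subcomplete forcing, when we are only trying to preserve one of these tree properties over a fixed target structure, can be analyzed by passing to an intermediate forcing; then invoke the preservation results for minimally subcomplete forcing established in Section \ref{sec:SCandPropertiesOfOmega1Trees}. Concretely, suppose $\P$ is subcomplete and $G$ is $\P$-generic. Each of the properties (1)--(7) is a statement about $T$ that, after forcing, concerns branches and antichains of $T$ and of finite powers/localizations $T^n$, $T_t$. The key structural point I would exploit is Lemma \ref{lemma:MinimalPartOfSubcompletenessIsPreserved}: after countably distributive forcing of size at most $\omega_1$, $\P$ remains minimally subcomplete. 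Since forcing with an $\omega_1$-tree (or with $T^n$, a finite product of $\omega_1$-trees) is countably distributive of size at most $\omega_1$ --- it adds no reals and has $\omega_1$-many conditions --- we can arrange the relevant auxiliary branch-adding forcing as the small countably distributive $\Q$ in that lemma.

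The core reduction I would carry out is as follows. To show $\P$ preserves ``$T$ is Suslin and $\ubp$'' (and similarly for the other clauses), I pass into the extension $\V[b]$ obtained by forcing with $T$ itself (or $T^n$), which adds the generic branch(es). In $\V[b]$, the forcing $\P$ is still minimally subcomplete by Lemma \ref{lemma:MinimalPartOfSubcompletenessIsPreserved}, because $T$ is countably distributive of size $\le\omega_1$. The tree properties in the definition --- that $T_t$ remains Suslin for $t$ off the generic branches, or that no further new branch is added --- are then exactly the statements that minimally subcomplete forcing over $\V[b]$ preserves Suslinness (Lemma \ref{lemma:Suslinpres}) and adds no new cofinal branch (Lemma \ref{lemma:scbranch}). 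I would verify commutativity of the two forcings: since $\P$ adds no reals and $T^n$ is countably distributive, the product $\P\times T^n$ can be analyzed in either order, so that forcing with $\P$ first and then with the (ground-model) tree gives the same extension as forcing with the tree first and then with $\P$ over $\V[b]$. This is where I would use that $\P$ remains (minimally) subcomplete over $\V[b]$, to control the branch/antichain structure after $\P$ acts.

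I would organize the proof by clause. Clause (1), $T$ Aronszajn, is immediate from Lemma \ref{lemma:scbranch} applied in $\V$ itself, since Aronszajnness just says $[T]=\emptyset$ and no new cofinal branch is added. Clause (3), $T$ Suslin, is Lemma \ref{lemma:Suslinpres} directly. Clause (2), $T$ not Kurepa, I would handle by noting that a subcomplete forcing adds no reals, hence preserves cardinals relevant here and, by Lemma \ref{lemma:scbranch}, adds no new cofinal branches; so the set of cofinal branches cannot grow, and if it had fewer than $\omega_2$ branches it continues to. For the rigidity clauses (4)--(7), I would pass to $\V[b_1,\dots,b_n]$ via the countably distributive forcing $T^n$, apply Lemma \ref{lemma:MinimalPartOfSubcompletenessIsPreserved} to see $\P$ is minimally subcomplete there, and then apply Lemmas \ref{lemma:scbranch} and \ref{lemma:Suslinpres} over that model to the relevant localizations $T_t$ and to the count of new branches.

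The main obstacle I anticipate is the bookkeeping for the $\ubp$ and Suslin-off-the-generic-branch clauses, namely verifying that ``exactly $n$ new branches'' and ``$T_t$ Suslin for $t$ off the $b_i$'' are genuinely preserved rather than merely that no new branches are added. The subtlety is that forcing with $\P$ over $\V[b_1,\dots,b_n]$ must not create a new branch through $T$ that, viewed back in $\V[G]$, would constitute an extra generic branch or destroy the Suslinness of some $T_t$. Establishing this requires checking that the genericity of the $b_i$ over $\V[G]$ is preserved (so that the branches in $\V[G][b_1,\dots,b_n]$ are precisely the $n$ generic ones plus ground-model branches), which rests on the commutativity of $\P$ with $T^n$ and on $\P$ adding no reals. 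I expect this interplay --- preservation of the exact \emph{number} of branches and of localized Suslinness under the reordering of the two forcings --- to be the delicate technical heart, whereas the ``no new branch'' and ``Suslin preserved'' inputs are supplied wholesale by the lemmas already proved.
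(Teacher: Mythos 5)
Your plan has the same architecture as the paper's proof: items (1)--(3) come from Lemmas \ref{lemma:scbranch} and \ref{lemma:Suslinpres}; for (4)--(7), one forces with the tree (or its finite power) first, uses Lemma \ref{lemma:MinimalPartOfSubcompletenessIsPreserved} to see that $\P$ remains minimally subcomplete in $\V[b_1,\dots,b_n]$, applies Lemmas \ref{lemma:scbranch} and \ref{lemma:Suslinpres} over that model, and commutes the two extensions by the product lemma. The genuine gap is in your justification of the one hypothesis on which this whole reduction rests: you claim that forcing with an $\omega_1$-tree, or with $T^n$, ``is countably distributive of size at most $\omega_1$ --- it adds no reals and has $\omega_1$-many conditions.'' That is false for $\omega_1$-trees in general. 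Forcing with a special Aronszajn tree adds a cofinal branch to it, and a special Aronszajn tree cannot acquire a cofinal branch in any extension in which $\omega_1$ survives; so that forcing collapses $\omega_1$ and in particular adds reals. Being an $\omega_1$-tree with $\omega_1$ many nodes provides no distributivity whatsoever, and without countable distributivity Lemma \ref{lemma:MinimalPartOfSubcompletenessIsPreserved} cannot be invoked, so the reduction breaks at its first step.

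The repair is to extract the distributivity from the hypotheses of each clause, which is exactly what the paper does: in (4) and (5), $T$ is Suslin, and Suslin trees are $\omega$-distributive as forcing notions; in (6), the assumption that $T$ is $n$-fold \sob{} is what yields that $T^n$ is countably distributive; in (7), the $(n-1)$-fold \sob{} hypothesis does this for $T^n$ --- indeed this is why clause (7) pairs $n$-fold \ubp{} with $(n-1)$-fold Suslinness off the branch. With this fixed, the rest works as you describe, and the ``delicate technical heart'' you anticipate largely dissolves: commutativity is just the product lemma (mutual genericity; no appeal to ``adds no reals'' is needed), after which Lemma \ref{lemma:scbranch} applied over $\V[b_1,\dots,b_n]$ gives $[T]^{\V[b_1,\dots,b_n][G]}=[T]^{\V[b_1,\dots,b_n]}=[T]^{\V}\cup\{b_1,\dots,b_n\}$, while Lemma \ref{lemma:scbranch} applied in $\V$ gives $[T]^{\V[G]}=[T]^{\V}$, so the new branches over $\V[G]$ are exactly the $n$ generic ones; localized Suslinness is Lemma \ref{lemma:Suslinpres} applied to the trees $T_t$ in $\V[b_1,\dots,b_n]$. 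Two smaller points: in (2), avoid saying subcomplete forcing ``preserves cardinals relevant here'' --- it can collapse $\omega_2$ (e.g.\ Namba forcing under \CH); all that is needed is preservation of $\omega_1$ (which does follow from adding no reals) together with $[T]^{\V[G]}=[T]^\V$. Also, the paper proves in addition the downward direction of (4)--(7) (if the property holds in $\V[G]$ then it holds in $\V$), which your proposal does not address, though that is more than the word ``preserved'' literally requires.
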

\let\qqed\qed
\begin{proof} \let\qed\relax
Items \ref{item:TisAronszajn}.~ and \ref{item:TisntKurepa}.~ are immediate corollaries of Lemma \ref{lemma:scbranch}. Item \ref{item:TisSuslin}.~ is Lemma \ref{lemma:Suslinpres}. In fact, these properties are even preserved by minimally subcomplete forcing.

For the remaining proofs, let $\P$ be a subcomplete forcing, and let $G$ be generic for $\P$ over $\V$.

\begin{proof}[Proof of \ref{item:TisSuslinandUBP}]
First we show upward absoluteness. Let $T$ be a Suslin tree with the $\ubp$. We have already seen that $T$ is still Suslin in $\V[G]$. To see that it is still $\ubp$, let $b$ be $T$-generic over $\V[G]$.
In $V[b]$, $b$ is the unique cofinal branch through $T$, and we have that $\P$ is still minimally subcomplete by Lemma \ref{lemma:MinimalPartOfSubcompletenessAbovemuIsPreserved}. Since minimally subcomplete forcing doesn't add branches to $\omega_1$-trees by Lemma \ref{lemma:scbranch}, $b$ is still the unique cofinal branch of $T$ in $V[b][G]=V[G][b]$. So $T$ still has the $\ubp$ in $V[G]$.

For downward absoluteness, suppose $T$ has the $\ubp$ in $V[G]$ but does not have the $\ubp$ in $V$. Let $p\in G$ force that $T$ has the $\ubp$. Let $b$ be a generic branch for $T$ over $\V$ such that in $V[b]$ the tree $T$ has at least two branches. Let $G'$ be $\P$-generic over $\V[b]$ with $p\in G'$. Then $T$ has at least two branches in $\V[b][G']=\V[G'][b]$, so $T$ is not $\ubp$ in $\V[G']$, contradicting that $p\in G'$.
\end{proof}

\begin{proof}[Proof of \ref{item:TisSOB}]
For upward absoluteness, let $T$ be \sob, and let $b$ be a generic branch for $T$ over $\V[G]$. $T$ is still Suslin in $V[G]$ by \ref{item:TisSuslin}. In $V[b]$, we have that $\P$ is still minimally subcomplete by Lemma \ref{lemma:MinimalPartOfSubcompletenessIsPreserved}. We have that for any node $t$ not in $b$, the tree $T_t$ remains Suslin in $V[b][G]=V[G][b]$ since after minimally subcomplete forcing $T_t$ remains Suslin by Lemma \ref{lemma:Suslinpres}. So $T$ remains \sob \ after forcing with $\P$.

For downward absoluteness, suppose $T$ is \sob \ in $V[G]$ but not in $V$. Let $p\in G$ force that $T$ is \sob. Let $b$ be a $\V$-generic branch through $T$ such that in $V[b]$ the tree $T$ is not \sob. So there is $t \in T$ off of $b$ such that $T_t$ is not Suslin in $\V[b]$. Let $G'$ be $\P$-generic over $\V[b]$ with $p\in G'$. Then, in $V[b][G']=\V[G'][b]$, $T_t$ is not Suslin, so that $T$ is not \sob{} in $\V[G']$, contradicting that $p\in G'$.
\end{proof}

\begin{proof}[Proof of \ref{item:TisNSOB}]
For upward absoluteness, let $T$ be $n$-fold \sob. Let $b_1\times b_2\times\ldots\times b_n$ be $T^n$-generic over $\V[G]$. Since $T$ is $n$-fold \sob, it follows that $T^n$ is countably distributive, so again we know that $\P$ is still minimally subcomplete in $\V[b_1,b_2,\ldots,b_n]$ by Lemma \ref{lemma:MinimalPartOfSubcompletenessIsPreserved}. In $V[b_1,\ldots,b_n][G]=V[G][b_1,\ldots,b_n]$, for any node $t$ not in one of the generic branches $b_1, \dots, b_n$, we have that $T_t$ is Suslin by Lemma \ref{lemma:Suslinpres}. So $T$ remains $n$-fold $\sob$ after forcing with $\P$. Downward absoluteness works as in \ref{item:TisSOB}.
\end{proof}

\begin{proof}[Proof of \ref{item:TisSuslinandNUBP}] \let\qed\qqed
For upward absoluteness, suppose that $T$ is $(n-1)$-fold Suslin off the generic branch and $n$-fold $\ubp$ for some $n\ge 1$.
Let $b_1\times b_2\times\ldots\times b_n$ be $T^n$-generic over $\V[G]$.
Since $T$ is $(n-1)$-fold \sob, it follows that $T^n$ is countably distributive, so again we know that $\P$ is still minimally subcomplete in $\V[b_1,b_2,\ldots,b_n]$ by Lemma \ref{lemma:MinimalPartOfSubcompletenessIsPreserved}.
Again we have $V[b_1,\ldots,b_n][G]=V[G][b_1,\ldots,b_n]$ where $b_1, \dots, b_n$ are the unique cofinal branches through $T$, since $\P$ does not add branches to $T^n$ over $V[b_1,\ldots,b_n]$. So $T$ has the $n$-fold $\ubp$ in $V[G]$. We have already seen in \ref{item:TisNSOB}.~that $T$ stays $(n-1)$-fold \sob{} in $\V[G]$.

Downward absoluteness is again the same as in \ref{item:TisSuslinandUBP}.
\end{proof}\end{proof}


\section{Generic absoluteness and the preservation of wide Aronszajn trees}
\label{sec:GenAbsWideAronszajn}

In Lemma \ref{lemma:scbranch}, we showed that subcomplete forcing cannot add a new branch to an $\omega_1$-tree, and in particular, that it preserves Aronszajn trees. In the present section, we will explore slightly stronger preservation properties.
Let us introduce versions of $\kappa$-trees in which the requirement that the levels have size less than $\kappa$ is relaxed.

\begin{definition}
\label{def:TreesWithWidths}
Let $\kappa$ and $\lambda$ be cardinals. We shall say that $T$ is a \emph{$(\kappa, {\leq}\lambda)$-tree} if $T$ is a tree of height $\kappa$ with levels of size less than or equal to $\lambda$. We shall refer to the size restriction on the levels in the tree in the second coordinate as the tree's width, so that a $(\kappa, {\leq}\lambda)$-tree has width $\le\lambda$.

An \emph{Aronszajn $(\kappa,{\leq}\lambda)$-tree} is a $(\kappa,{\leq}\lambda)$-tree with no cofinal branch. 
\end{definition}

It is easy to see that in general, countably closed forcing can't add a (cofinal) branch to any $(\omega_1, {\leq}\kappa)$-Aronszajn tree, for any $\kappa$. The guiding question for the work in the present section, as stated in \cite{Kaethe:Diss}, is as follows.

\begin{question}
\label{ques:aronszajnbranch?}
Can subcomplete forcing add cofinal branches to an $(\omega_1,{\leq}\omega_1)$-Aronszajn tree?
\end{question}

In the remainder of the present section, we will answer this question fully.
Let's first make the simple observation that even countably closed forcing may add branches to a tree of height $\omega_1$ and width ${\le}2^\omega$ (but such a tree can never be Aronszajn, by our earlier remarks).

\begin{observation}
Subcomplete (or even countably closed) forcing may add a cofinal branch to an $(\omega_1,{\leq}2^{\omega})$-tree.
\end{observation}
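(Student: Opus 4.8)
The plan is to write down one explicit example and check that it has the required properties; conceptually there is nothing to prove beyond choosing the right tree, which is why this is recorded as an observation. I would take $T=\funcs{{<}\omega_1}{2}$, the tree of all binary sequences $s\colon\alpha\to 2$ with $\alpha<\omega_1$, ordered by end-extension, so that the height of a node $s$ is $\dom(s)$ and the $\alpha$-th level is $\funcs{\alpha}{2}$. First I would verify that $T$ is an $(\omega_1,{\le}2^\omega)$-tree: its height is clearly $\omega_1$, and for $\omega\le\alpha<\omega_1$ the level $\funcs{\alpha}{2}$ has size $2^{|\alpha|}=2^\omega=\c$, while the finitely many initial levels are finite, so every level has size at most $2^\omega$.

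Next I would regard $T$ itself as a notion of forcing, using the reversed order as in our convention for forcing with trees, so that $s$ is stronger than $t$ exactly when $s\supseteq t$; this is precisely the forcing $\Add(\omega_1,1)$. I would observe that it is countably closed: given a descending sequence $\seq{s_n}{n<\omega}$ of conditions, the union $\bigcup_{n}s_n$ is a binary sequence of countable length, hence a condition, and it is a common lower bound. Letting $G$ be generic and $b=\bigcup G$, the sets $D_\alpha=\set{s\in T}{\dom(s)\ge\alpha}$ are dense for each $\alpha<\omega_1$, so $b\colon\omega_1\to 2$ is a cofinal branch through $T$. To see that $b$ is genuinely new, I would note that for each $f\in\funcs{\omega_1}{2}\cap\V$ the set $E_f=\set{s\in T}{s(\xi)\ne f(\xi)\ \text{for some}\ \xi\in\dom(s)}$ is dense (extend any condition $t$ of length $\beta$ by setting the value at $\beta$ to be $1-f(\beta)$), so $b$ differs from every ground-model function, i.e.\ $b\notin\V$. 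Since countably closed forcing preserves $\omega_1$, $T$ still has height $\omega_1$ in $\V[G]$ and $b$ is a new cofinal branch there.

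Finally, since every countably closed forcing is subcomplete, as Jensen showed and as recalled in the introduction, the same example witnesses the claim for subcomplete forcing. The only points requiring any care are the width computation and the density argument showing $b\notin\V$, and there is no real obstacle. The conceptual content worth flagging is that, by Lemma \ref{lemma:scbranch}, no new branch can be added to an $\omega_1$-tree, whose levels are countable; the example therefore must have uncountable levels, and width $2^\omega$ is exactly what allows $T$ to be a legitimate $(\omega_1,{\le}2^\omega)$-tree while still admitting a new cofinal branch, since its branches correspond to elements of $\funcs{\omega_1}{2}$.
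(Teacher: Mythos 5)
Your proposal is correct and is essentially identical to the paper's proof: both take $\Add(\omega_1,1)$, viewed as the tree $\funcs{{<}\omega_1}{2}$ of height $\omega_1$ with levels of size at most $2^\omega$, note that it is countably closed and hence subcomplete, and observe that forcing with it adds a cofinal branch. You simply spell out the density arguments (cofinality and newness of the generic branch) that the paper leaves implicit, which is fine but adds nothing structurally different.
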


\begin{proof}
The point here is that the poset $\Add(\omega_1,1)$ is subcomplete since it is countably closed, but it may be viewed as a tree of height $\omega_1$ that has levels of size up to $2^{\omega}$. Of course this tree is not Aronszajn - it is Kurepa. Every cofinal branch through the tree corresponds to a subset of $\omega_1$, of which there are already more than $\omega_1$-many of in the ground model.
\end{proof}

If we allow the size of the levels of the height $\omega_1$ tree to be large, then we can obtain a slightly more complicated example of an Aronszajn tree to which subcomplete forcing can add a cofinal branch, using the forcing denoted by Jensen as $\P_A$. In the following definition, we write $\cof(\omega)$ for the class of ordinals with countable cofinality.
%

\begin{definition}
Let $\kappa > \omega_1$ be regular, and let $A \subseteq \kappa \cap \cof(\omega)$. We write \emph{$\P_{A}$} to denote the forcing designed to shoot a cofinal, normal sequence of order type $\omega_1$ through $A$. The conditions of $\P_A$ consist of normal functions of the form $p: \nu +1 \to A$, where $\nu< \omega_1$, and extension is defined in the usual way, by $p \leq q$ if and only if $q \subseteq p$.
\end{definition}

Jensen showed that $\P_A$ is subcomplete, see \cite{Jensen2014:SubcompleteAndLForcingSingapore}.
If $(\kappa \cap \cof(\omega))\setminus A$ is stationary in $\kappa \cap \cof(\omega)$, then $\P_A$ is not countably closed.
A $\P_A$-generic filter $G$ gives rise to the function $\cup G : \omega_1 \to A$ which is normal and cofinal in $\kappa$. The forcing $\P_A$ is used to show that the subcomplete forcing axiom $\SCFA$ implies \emph{Friedman's Principle}, which states that for every regular cardinal $\kappa > \omega_1$ and every stationary set $A \subseteq \kappa\cap\cof(\omega)$, there is a normal function $f: \omega_1 \to A$, that is, $A$ contains a closed set of order type $\omega_1$.

\begin{proposition}
\label{prop:PAaddsBranchToItself}
Suppose that Friedman's Principle fails for $\omega_2$. Then subcomplete forcing may add a cofinal branch to an $(\omega_1,{\leq}\omega_2 \cdot 2^\omega)$-Aronszajn tree. \end{proposition}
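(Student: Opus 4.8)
The plan is to let $A\subseteq\omega_2\cap\cof(\omega)$ be a stationary set witnessing the failure of Friedman's Principle for $\omega_2$, that is, a stationary $A$ that contains no closed subset of order type $\omega_1$ (equivalently, one admitting no normal function $f\colon\omega_1\to A$), and then to show that Jensen's forcing $\P_A$, viewed as a tree under reverse inclusion, is itself the desired Aronszajn tree. Since Jensen proved $\P_A$ to be subcomplete, and since forcing with a tree adds a cofinal branch through it, this yields a subcomplete forcing adding a cofinal branch to an $(\omega_1,{\le}\omega_2\cdot2^\omega)$-Aronszajn tree.

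First I would set up $T=\P_A$ as a tree: its nodes are the normal functions $p\colon\nu+1\to A$ with $\nu<\omega_1$, ordered so that longer conditions lie higher up (matching the convention that stronger conditions are higher in the tree). The predecessors of a node with domain $\nu+1$ are exactly its proper initial segments $p\rest(\mu+1)$ for $\mu<\nu$, which are well-ordered of order type $\nu$; hence the $\alpha$-th level $T_\alpha$ consists of the normal functions with domain $\alpha+1$, and $\height(T)=\omega_1$ provided conditions of every countable length exist. For the width, each level $T_\alpha$ injects into the set of functions from the countable set $\alpha+1$ into $\omega_2$, so $|T_\alpha|\le\omega_2^{\aleph_0}=\omega_2\cdot2^{\aleph_0}$ by Hausdorff's formula; thus $T$ has width ${\le}\omega_2\cdot2^\omega$, as required.

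Next I would verify that $T$ is Aronszajn by identifying cofinal branches with witnesses to Friedman's Principle. A cofinal branch of $T$ meets every level $\alpha<\omega_1$ in a unique node $p_\alpha\colon\alpha+1\to A$, and these cohere, so their union is a normal function $f\colon\omega_1\to A$ whose range is a closed set of order type $\omega_1$ contained in $A$; conversely any such $f$ yields a branch. Since $A$ was chosen so that no such $f$ exists, $T$ has no cofinal branch and is therefore $(\omega_1,{\le}\omega_2\cdot2^\omega)$-Aronszajn. On the other hand, a $\P_A$-generic filter $G$ is precisely a cofinal branch of $T$: by genericity $\cup G\colon\omega_1\to A$ is normal and cofinal in $\omega_2$, so $G$ meets every level. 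Thus the subcomplete forcing $\P_A=T$ adds a cofinal branch to the Aronszajn tree $T$.

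The main obstacle is the single technical point on which both $\height(T)=\omega_1$ and the cofinality of the generic branch rest: that the stationary set $A\subseteq\omega_2\cap\cof(\omega)$ contains closed copies of every countable order type, equivalently that $\P_A$ has conditions of every countable length and that, below any condition, the sets of conditions of longer length are dense. The successor case is trivial; for a limit length $\gamma$ one must arrange the supremum of the constructed $\gamma$-sequence to land back in $A$, and here I would use that the suprema $\sup(M\cap\omega_2)$ of countable elementary submodels $M\prec H_\theta$ with $A\in M$ form a club in $\omega_2\cap\cof(\omega)$, so stationarity of $A$ provides such an $M$ with $\sup(M\cap\omega_2)\in A$, inside which the required normal sequence can be assembled by induction on the order type. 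This closure fact, which is exactly why Friedman's Principle first becomes nontrivial at order type $\omega_1$, is the only place where genuine work beyond bookkeeping is needed.
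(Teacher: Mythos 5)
Your proposal is correct and follows essentially the same route as the paper: view Jensen's subcomplete forcing $\P_A$, for $A$ a stationary witness to the failure of Friedman's Principle, as a tree of height $\omega_1$ and width at most $\omega_2^{\aleph_0}=\omega_2\cdot 2^{\aleph_0}$, note that cofinal branches correspond exactly to normal functions $f:\omega_1\to A$ so the tree is Aronszajn, and observe that forcing with it adds such a branch. The only difference is that you spell out the density/height argument (via countable elementary submodels whose suprema land in $A$) that the paper leaves implicit as part of Jensen's analysis of $\P_A$; that is a reasonable piece of added rigor, not a different proof.
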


\begin{proof}
Let $A \subseteq \cof(\omega) \cap \omega_2$ witness the failure of Friedman's Principle. Consider the forcing poset $\P_A$ as a tree. It has size $\omega_1 \cdot \omega_2^\omega = \omega_2^\omega$, since it consists of functions with domain in $\omega_1$, and each condition is from a countable ordinal to $\omega_2$.

Considering $\P_A$ as a tree, it has height $\omega_1$. Each level has size less than or equal to $\omega_2^\omega = \omega_2 \cdot 2^\omega$. Moreover, since Friedman's Principle fails for $\omega_2$, the tree $\P_A$ has no cofinal branches and is thus Aronszajn, yet forcing with the tree will add a cofinal branch.
\end{proof}

However, we may slightly tweak the proof of Lemma \ref{lemma:scbranch} to see that
subcomplete forcing does preserve $(\omega_1,{<}2^\omega)$-Aronszajn trees. Note that this shows that the failure of \CH{} implies a negative answer to Question \ref{ques:aronszajnbranch?}. 

\begin{theorem}
\label{thm:SCforcingAddsNoNewBranchGeneral}
Subcomplete forcing cannot add (cofinal) branches to $(\omega_1,{<}2^\omega)$-trees. \end{theorem}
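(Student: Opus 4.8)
The plan is to mirror the proof of Lemma \ref{lemma:scbranch}, making two adjustments that account for the larger width. First I would record a reduction: a tree of height $\omega_1$ and width ${<}2^\omega$ has at most $2^\omega$ nodes, so --- branch-adding being invariant under tree isomorphism --- we may assume without loss of generality that the nodes of $T$ are pairwise distinct reals. Then $T$, \emph{as a set of nodes}, is a subset of $H_{\omega_1}$, even though its order relation may have size up to $2^\omega$. This representation is the crucial device: any elementary embedding fixes every real, so the elevating embedding produced by subcompleteness will fix the nodes of the collapsed tree pointwise, exactly as happens automatically in Lemma \ref{lemma:scbranch}, where one had $T\sub H_{\omega_1}$.

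As in Lemma \ref{lemma:scbranch}, I would suppose toward a contradiction that $p$ forces $\dot b$ to be a new cofinal branch of $T$, fix a transitive $\ZFC^-$-model $N$ with $H_\theta\sub N$ and $T,\P,\dot b,p\in N$ such that $Z_{N,\P,\kla{\dot b,p}}$ contains a club, pick a witnessing countable $X\prec N$, and set $\alpha=\omega_1\cap X$, $\bN=N_X$, $\sigma=\sigma_X$ and $\bar T=\sigma^{-1}(T)$. Since every node of $T$ lying in $X$ has height below $\alpha$, the tree $\bar T$ is a countable subtree of $T\rest\alpha$ whose nodes are reals, and $\bar p$ forces $\dot{\bar b}$ to be a new cofinal branch of $\bar T$ over $\bN$.

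The second adjustment replaces the diagonalization of Lemma \ref{lemma:scbranch}. There, level $\alpha$ of $T$ was countable, so one could meet, in $\omega$ steps, the countably many partial branches $b_t$ (for $t\in T_\alpha$) that extend to height $\alpha$. Now $T_\alpha$ may have size up to ${<}2^\omega$, so instead I would exploit that $\dot{\bar b}$ is forced to be new in order to fork repeatedly below $\bar p$: building a perfect tree of conditions $\seq{\bar q_s}{s\in 2^{<\omega}}$, while threading along each branch the countably many dense sets of $\bN$ and demanding decisions at cofinally many levels below $\alpha$, yields continuum many pairwise distinct cofinal branches $\bar b_x=\dot{\bar b}^{\bar G_x}$ of $\bar T$, each $\bar G_x\ni\bar p$ generic over $\bN$ and lying in $V$ since $\bN$ is countable. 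The ``dangerous'' branches --- those $d$ of $\bar T$ with $d=b_t$ for some $t\in T_\alpha$, i.e.\ those that extend to height $\alpha$ in $T$ --- inject into $T_\alpha$ (as $\alpha$ is a limit and $T$ is normal), and so number fewer than $2^\omega$; hence some candidate $\bar b:=\bar b_x$ is not dangerous.

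Finally I would feed the corresponding generic into (minimal) subcompleteness, which applies since subcomplete forcing is minimally subcomplete by Observation \ref{obs:MinimalIsWeaker}, to obtain, below a suitable $q$ and in $V[G]$, an embedding $\sigma'\colon\bN\prec N$ with critical point $\alpha$, $\sigma'``\bar G\sub G$ and $\sigma'(\bar T,\dot{\bar b},\bar p)=(T,\dot b,p)$, lifting to $\sigma^*\colon\bN[\bar G]\prec N[G]$ with $b:=\sigma^*(\bar b)=\dot b^G$ a cofinal branch of $T$. Because $\sigma^*$ fixes the reals that are the nodes of $\bar b$ and has critical point $\alpha$, we get $b\rest\alpha=\bar b$, exactly as in Lemma \ref{lemma:scbranch}; but cofinality of $b$ forces $b\rest\alpha=b_{b(\alpha)}$ to be dangerous, contradicting the choice of $\bar b$. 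The step I expect to be the main obstacle is precisely the identity $b\rest\alpha=\bar b$: for a genuinely wide tree the nodes below level $\alpha$ are not captured by the countable $X$, so a priori the elevating embedding moves them and one only obtains $b\rest\alpha=\sigma'``\bar b$, over which we have little control. The reals-labeling of the first paragraph is what forces $\sigma'``\bar b=\bar b$ and thereby rescues the argument; carefully verifying that this relabeling is harmless, and that the perfect-tree construction can simultaneously secure genericity over $\bN$, cofinality in $\alpha$, and continuum many distinct values, is the technical heart of the proof.
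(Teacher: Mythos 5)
Your proposal is correct and takes essentially the same approach as the paper: the paper likewise fixes the nodes pointwise (by taking $T\sub H_{\omega_1}$, whose elements are fixed by elementary embeddings between transitive models just as reals are), builds the same perfect binary tree of conditions below $\bar{p}$ to generate continuum many mutually distinct branches of $\bar{T}$ from generics over the countable collapse, and derives the contradiction by counting against level $T_\alpha$, which has size ${<}2^\omega$. The only difference is bookkeeping in the endgame: you pigeonhole first to select a single non-dangerous branch and invoke elevation once, whereas the paper invokes subcompleteness for all $2^\omega$ generics and concludes that level $\alpha$ would need $2^\omega$ nodes --- the same counting argument, organized in the opposite order.
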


\begin{proof}
Assume not. Let $\dot b$ be a name for a new branch through $T \subseteq H_{\omega_1}$ an $(\omega_1, < 2^\omega)$-tree; let $p$ be a condition forcing that $\dot b$ is a new cofinal branch through $\check T$.
Let $\theta$ verify the subcompleteness of $\P$ and let's place ourselves in the standard setup: \begin{itemize}
	\item $\P \in H_\theta \subseteq N = L_\tau[A] \models \ZFC^-$ where $\tau>\theta$ and $A \subseteq \tau$
	\item $\sigma: \N \cong X \preccurlyeq N$ where $X$ is countable and $\N$ is full.
	\item $\sigma(\overline \theta, \overline{\P}, \overline T, \overline p, \overline{\dot b}) = \theta, \P, T, p, \dot b$.
\end{itemize}
Let $\alpha = \omega_1^{\N}$, the critical point of the embedding $\sigma$. By elementarity, we have that $\overline p$ forces $\overline{\dot b}$ to be a new branch over $\N$. We will construct continuum-many generics $\G_r$ for $\overline{\P}$ over $\N$, indexed by reals, each of which will correspond to continuum-many different values of the generic branch on level $\alpha$ of the tree $T$ in $N$, to obtain a contradiction.

Toward this end, enumerate the dense sets $\seq{D_n}{ n < \omega}$ of $\overline{\P}$ that belong to $\N$, so that $\overline p \in D_0$.

We would like to construct binary trees of conditions in $\overline \P$ and branches in $\overline T$: $ P= \seq{ \overline p_x }{ x \in 2^{<\omega} }$, $B = \seq{ \overline b_x }{ x \in 2^{< \omega} }$ such that, letting $|x|$ be the length of $x$, we have the following: \begin{itemize}
	\item $\overline p_x \in D_{|x|}$
	\item $x \subseteq y \implies \overline p_y \leq \overline p_x \leq \overline p$
	\item for some $\beta > |x|$, $p_x \forces \overline{\dot b} \rest \check \beta  = \overline b_x$
	\item $\overline b_{x^\frown \langle 0 \rangle} \neq \overline b_{x^\frown \langle 1 \rangle}$.
\end{itemize}

To do this, let $\overline p_0 := \overline p \in \N$. As we noted in the proof of Lemma \ref{lemma:scbranch}, there must be two conditions $\overline q^0_{1} \perp \overline q^1_{1}$, both extending $\overline p_0$, that decide the value of the branch $\overline{\dot b}$ differently. This always has to be true since these conditions extend $\overline p$, that forces that there is a new cofinal branch $\overline{\dot b}$. In particular, by our reasoning there is $\beta< \overline{\omega_1}$, and two conditions $\overline q_1^0 \leq \overline p_0$ and $\overline q_1^1 \leq \overline p_0$ such that $$\overline q_1^0 \forces \overline{\dot b}(\beta) = t^0 \text{ \ \ and \ \ } \overline q^1_{1} \forces \overline{\dot b}(\beta) = t^1,$$ where $t^0 \neq t^1$. Let $\overline b_{\langle 0 \rangle}$ be the branch in $\overline T$ below $t^0$ and $\overline b_{\langle 1 \rangle}$ be the branch in $\overline T$ below $t^1$. It must be that $\overline b_{\langle 0 \rangle} \neq \overline b_{\langle 1 \rangle}$. Moreover we may extend $\overline q^0_1$ and $\overline q^1_1$ to conditions $\overline p_{\langle 0 \rangle}, \overline p_{\langle 1 \rangle} \in \overline D_1$.

Continuing in such a fashion, we may recursively continue to define $\overline p_x$ for $x \in 2^{< \omega}$.

In particular, suppose $\overline p_x$ and $b_x$ are defined for $x$ of length $n$. By our reasoning above there is $\beta > |x|$, and two conditions above each $\overline p_x$, such that $\overline q_x^0 \leq \overline p_x$ and $\overline q_x^1 \leq \overline p_x$ so that
	$$\overline q_x^0 \forces \overline{\dot b}(\beta) = t^0 \text{\ \ and \ \ } \overline q^1_x \forces \overline{\dot b}(\beta) = t^1,$$
where $t^0 \neq t^1$. Let $\overline b_{x^\frown \langle 0 \rangle}$ be the branch in $\overline T$ below $t^0$ and $\overline b_{x^\frown \langle 1 \rangle}$ be the branch in $\overline T$ below $t^1$. Extending each of these incompatible conditions so as to land in the $(n+1)$th dense set, we find $\overline p_{x^\frown \langle 0 \rangle} \leq \overline q^0_x$ and $\overline p_{x^\frown \langle 1 \rangle} \leq \overline q^1_x$ such that $\overline p_{x^\frown \langle 0 \rangle}, \overline p_{x^\frown \langle 1 \rangle} \in D_{n+1}$.
Then, as desired, for each $x$ of length $n$ we have that $\overline p_{x^\frown \langle 0 \rangle}, \overline p_{x^\frown \langle 1 \rangle} \in D_{n+1}$. We've also designed it so that $\overline p_{x^\frown \langle 1 \rangle} \leq \overline p_x \leq \overline p$ and $\overline p_{x^\frown \langle 0 \rangle} \leq \overline p_x \leq \overline p$. Since we know that there is some $m'$ such that $\overline p_x \forces \overline{ \dot b} \rest m' = \overline b_x$ we know that $p_{x^\frown \langle 1 \rangle}$ and $p_{x^\frown \langle 0 \rangle}$ force the same thing since they both extend $\overline p_x$. Thus our construction gives us that $\overline p_{x^\frown \langle 0 \rangle} \forces \overline{ \dot b} \rest m = \overline b_{x^\frown \langle 0 \rangle}$ and $\overline p_{x^\frown \langle 1 \rangle} \forces \overline{ \dot b} \rest m' = \overline b_x$.

So we have our binary trees $P$ and $B$ as desired. Any chain of conditions in the binary tree $P$ will generate a generic filter $\overline G_r$; every real $r: \omega \to 2$ codes a path in the binary tree of conditions generating the generic. This is because our conditions were chosen to meet all of the dense sets in our list. Moreover, each generic filter $\overline G_r$ corresponds to a branch $\overline b_r$, where for each initial segment $t$ of $r$ satisfies that for some $\beta > |x|$, $\overline b_r \rest m = \overline b_x$. Because of how we chose $P$, this gives us that $$\N[\G] \models {\overline{\dot b}}^{\G} =\overline b_r.$$

For each $r$ let $\overline{\dot b}^{\G_r} = \overline b_r$. Since $\P$ is subcomplete, for each $r$ there is a condition $q_r \in \P$ such that whenever $G$ is $\P$-generic with $q_r \in G$, by subcompleteness we have $\sigma_r \in V[G]$ such that: \begin{itemize}
	\item $\sigma_r: \N \prec N$
	\item $\sigma_r(\overline \theta, \overline{\P}, \overline T, \overline p, \overline{\dot b}) = \theta, \P, T, p, \dot b$
	\item $\sigma_r``\, \G_r \subseteq G$.
\end{itemize}
So below each $q_r$ there is a lift $\sigma_r^*:\N[\G_r] \prec N[G]$ extending $\sigma_r$ with $\sigma_r^*(\overline b_r) = \sigma_r(\overline{\dot b})^G=\dot b^G = b_r$, and $\sigma_r^*(\overline T) = \sigma_r(\overline T)^G=T$.

This means that we may force over $N$ with $\P$ to obtain continuum many cofinal branches through the tree, $\seq{ b_r }{ r \in 2^\omega}$. Each of these branches must, of course, have a node on level $\alpha$. Since each cofinal branch $\overline b_r$ is unique, and since $\alpha$ is the critical point, this means that there are $2^\omega$-many nodes on level $\alpha$ of the tree $T$ in $N$, a contradiction.
\end{proof}

Note that Proposition \ref{prop:PAaddsBranchToItself} shows that if \CH{} fails, then the previous theorem is optimal, that is, it cannot be extended to $(\omega_1,{\le}2^\omega)$-trees.

Next, we will look at the preservation of wide Aronszajn trees from a different angle, and show that under \CH, dropping the restriction to trees with countable levels amounts to making a statement about a certain form of generic absoluteness which we introduce in the following.

\begin{definition}
\label{def:GenericAbsoluteness}
Let $n$ be a natural number, let $\P$ be a notion of forcing, and let $\kappa$ be a cardinal. Then \emph{$\P$-generic $\mathbf{\Sigma}^1_n(\kappa)$-absoluteness} is the statement that for any model $M=\kla{M,\vec{A}}$ of size $\kappa$ for a countable first order language and every $\Sigma^1_n$-sentence $\varphi$ over the language of $M$, the following holds:
\[(M \models \varphi)^V \iff 1_\P\forces_\P(M \models \varphi)\]
Note that we don't distinguish between the first and second order satisfaction symbol.


For a forcing class $\Gamma$, $\Gamma$-generic $\mathbf{\Sigma}^1_n(\kappa)$-absoluteness is the statement that $\P$-generic $\mathbf{\Sigma}^1_n(\kappa)$-absoluteness holds for every $\P\in\Gamma$. The classes of interest to us are the classes of ccc, proper, semi-proper, stationary set preserving or subcomplete forcings.
\end{definition}

We will mostly be interested in $\mathbf{\Sigma}^1_1(\kappa)$-absoluteness. Note that by upward absoluteness, $\mathbf{\Sigma}^1_1(\kappa)$-absoluteness for a forcing notion $\P$ can be equivalently expressed by saying that for $M$ as in the above definition and a $\Sigma^1_1$-formula $\varphi$, if $M\models\varphi$ holds in every forcing extension by $\P$, then $M\models\varphi$ holds in $\V$. It is a $\ZFC$ fact that countably-closed $\mathbf{\Sigma}_1^1(\omega_1)$-absoluteness holds, and more generally, ${<}\kappa$-closed $\mathbf{\Sigma}_1^1(\kappa)$-absoluteness holds, for regular $\kappa$, see \cite{Fuchs:MPclosed}. Much is known about the case $\kappa=\omega$. For example, by Shoenfield absoluteness, $\P$-generic $\mathbf{\Sigma}^1_2(\omega)$-generic absoluteness holds for any forcing notion $\P$. Here, we will mostly be interested in the case $\kappa=\omega_1$.
%
The following lemma gives an equivalent characterization under \CH{} for forcing notions which do not add reals, in terms of preserving wide Aronszajn trees.

\begin{lemma}
\label{lem:CharacterizationOfGenericAbsolutenessUnderCH}
Assume $\CH$. Let $\P$ be a forcing notion. Then the following are equivalent.
\begin{enumerate}[label=(\arabic*)]
\item
\label{item:NoNewRealsAndBranches}
Whenever $T$ is an $(\omega_1,{\le}\omega_1)$-Aronszajn tree, then
it is not the case that $1_\P$ forces that $\check{T}$ has a cofinal branch, and it is not the case that $1_\P\forces_\P$``there is a new real''.
\item
\label{item:Sigma1-1-Absoluteness}
$\P$-generic $\mathbf{\Sigma}_1^1(\omega_1)$-absoluteness holds.
\end{enumerate}
\end{lemma}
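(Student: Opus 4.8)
The plan is to prove the two implications separately, using throughout that for a $\Sigma^1_1$ sentence the direction $(M\models\varphi)^V\Rightarrow 1_\P\forces(M\models\varphi)$ is automatic, since a second-order witness in $V$ remains one in any extension; thus in both \ref{item:NoNewRealsAndBranches} and \ref{item:Sigma1-1-Absoluteness} the real content is the downward transfer from the generic extension to $V$.

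For \ref{item:Sigma1-1-Absoluteness}$\Rightarrow$\ref{item:NoNewRealsAndBranches} I would argue contrapositively, by coding each of the two phenomena in \ref{item:NoNewRealsAndBranches} as a $\Sigma^1_1$ sentence over a structure of size $\omega_1$, which \CH{} makes available. If $T$ is an $(\omega_1,{\le}\omega_1)$-Aronszajn tree, let $M=\langle H_{\omega_1},\in,T,<_T\rangle$, of size $\omega_1$ by \CH, and let $\varphi_T$ be the sentence ``there is a cofinal branch of $T$'': the branch is the single second-order existential, and ``$B$ is a cofinal branch'' is first order over $\langle H_{\omega_1},\in,<_T\rangle$ (heights of nodes are computable there), so $\varphi_T$ is $\Sigma^1_1$. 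Since $T$ is Aronszajn, $(M\models\varphi_T)^V$ is false; were $1_\P$ to force $\check T$ to have a cofinal branch, every extension would contain such a $B$, giving $1_\P\forces(M\models\varphi_T)$ and contradicting \ref{item:Sigma1-1-Absoluteness}. To rule out a forced new real, enumerate the reals of $V$ as $\seq{r_\xi}{\xi<\omega_1}$ (again \CH), code this list by a predicate $R$ with $R(\xi,n)\iff n\in r_\xi$, and take the $\Sigma^1_1$ sentence $\exists X\,\forall\xi\,\exists n\,\lnot(n\in X\leftrightarrow R(\xi,n))$ asserting the existence of a real differing from every $r_\xi$. This is false in $V$ but true in any extension that adds a real, so $1_\P\forces$``there is a new real'' would again yield a forced sentence false in $V$.

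For \ref{item:NoNewRealsAndBranches}$\Rightarrow$\ref{item:Sigma1-1-Absoluteness}, suppose $1_\P\forces(M\models\varphi)$ where $\varphi=\exists X\,\psi(X)$ is $\Sigma^1_1$ over $M$ of size $\omega_1$; I must produce a witness in $V$. First I would transport $M$ onto the domain $\omega_1$ and Skolemize $\psi$, so that a witness to $\psi(X)$ is $X$ together with Skolem functions and its correctness can be checked on initial segments. Then I build the tree $T$ whose nodes at level $\alpha<\omega_1$ are the locally consistent approximations $(X\cap\alpha,\ \text{Skolem data}\mathbin{\upharpoonright}\alpha)$, ordered by end-extension. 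By \CH{} each level has size at most $2^{|\alpha|}=2^\omega=\omega_1$, so $T$ is an $(\omega_1,{\le}\omega_1)$-tree, and by construction a cofinal branch of $T$ computed in any outer model is precisely a witness to $\psi$ there. Hence, if $\varphi$ failed in $V$, then $T$ would be an $(\omega_1,{\le}\omega_1)$-Aronszajn tree of $V$. Now the two clauses of \ref{item:NoNewRealsAndBranches} combine: because $\P$ adds no new reals, no new approximations (which are coded by reals) appear, so $T^{\V[G]}=T^V$; and because $\P$ adds no cofinal branch to the Aronszajn tree $T$, it remains branchless in $\V[G]$. Then $M\models\varphi$ fails in $\V[G]$, contradicting $1_\P\forces(M\models\varphi)$, so $(M\models\varphi)^V$ holds.

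The crux, and the only genuinely technical step, is the construction of this approximation tree so that its cofinal branches correspond \emph{exactly} to witnesses of the first-order matrix $\psi$: without Skolemization a branch would only supply initial segments individually consistent with $\psi$ rather than a global witness, so the Skolem functions must be folded into the nodes. The second delicate point, where \CH{} is essential, is the verification that the levels have size $\le\omega_1$, so that $T$ is a tree of exactly the kind governed by \ref{item:NoNewRealsAndBranches}. Once the tree is in place, the two clauses of \ref{item:NoNewRealsAndBranches} play complementary roles: ``no new reals'' pins $T$ down between $V$ and $\V[G]$, while ``no new branch through an Aronszajn tree'' forbids a witness from appearing in the extension.
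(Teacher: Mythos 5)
Your proposal is correct and takes essentially the same route as the paper's proof: the direction \ref{item:Sigma1-1-Absoluteness}$\implies$\ref{item:NoNewRealsAndBranches} is the identical \CH-coding argument, and for \ref{item:NoNewRealsAndBranches}$\implies$\ref{item:Sigma1-1-Absoluteness} the paper builds the very same approximation tree of pairs $(\alpha, X\cap\alpha)$, the only difference being that instead of folding Skolem functions into the nodes it orders the approximations by elementary substructure, which serves exactly the same purpose of making a cofinal branch union up to a genuine witness. Both arguments then use \CH{} to bound the width of the tree, the no-new-reals clause to see that the initial segments of the generic witness are nodes of the ground-model tree, and the Aronszajn-tree clause to transfer the resulting branch (hence the witness) back to $\V$.
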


\begin{proof}
The direction \ref{item:Sigma1-1-Absoluteness}$\implies$\ref{item:NoNewRealsAndBranches} is clear: if $\mathbf{\Sigma}_1^1(\omega_1)$-statements are absolute for $\P$, then it cannot be that $1_\P$ forces that a real is added, because otherwise, by $\CH$, one could use a predicate $A\sub\omega_1$ which lists all reals, and the $\Sigma^1_1$-statement ``there is a subset $a$ of $\omega$ which is not listed in $A$'' would hold in any forcing extension by $\P$, but not in $\V$. Similarly, let $T$ be an $(\omega_1,{\le}\omega_1)$-Aronszajn tree. It cannot be that $1_\P$ forces that $\check{T}$ has a cofinal branch, because otherwise the $\Sigma^1_1$ statement ``$\check{T}$ has a cofinal branch'' would be true in $\V^\P$ but not in $\V$ - the nodes of $T$ can be assumed to be countable ordinals, and the tree ordering can be used as a binary predicate to express this.

Let's prove \ref{item:NoNewRealsAndBranches}$\implies$\ref{item:Sigma1-1-Absoluteness}. Since we are assuming $\CH$, we have that for any $(\omega_1,{\le}2^\omega)$-tree $T$, it is not the case that $1_\P$ forces that $\check{T}$ has a cofinal branch.
Upward absoluteness between $V$ and $V^\P$ clearly holds for $\Sigma^1_1$-statements.
To show downward absoluteness, let $\vec{A}$ be a finite list of finitary predicates on $\omega_1$, $\vec{A}\in V$.

Let $\psi(\vec{A})$ be the following statement:
\[\exists X \ (\omega_1,\vec{A}, X) \models \varphi\]
where $\varphi$ is a first order sentence in the language of set theory with predicate symbols for $\vec{A}$ and $X$. Assume that $\psi(\vec{A})$ is true in $V^\P$. Let $\dot{X}$ be a $\P$-name such that $1_\P$ forces that $\dot{X}$ is a witness that $\psi(\vec{\check{A}})$ holds.

In $\V$, let $T$ be the tree consisting of nodes of the form $(\alpha, x)$ such that $x \subseteq \alpha$, $\alpha<\omega_1$ and $(\alpha, \vec{A}\rest\alpha, x) \models \varphi(a)$, where
$\vec{A}\rest\alpha$ is the list whose elements are of the form $A_i\cap\alpha^{m_i}$, $m_i$ being the arity of $A_i$. The tree ordering $\leq$ is defined by setting
\[(\alpha, x) \leq (\beta, y) \iff (\alpha, \vec{A}\rest\alpha, x) \prec (\beta, \vec{A}\rest\beta, y).\]
Notice that $T$ has cardinality $\omega_1$ in $\V$, by \CH.

Now, if $G$ is an arbitrary filter $\P$-generic over $\V$, then
by a standard L\"{o}wenheim-Skolem style argument, applied in $\V[G]$, the set
\[C = \set{ \alpha < \omega_1 }{ (\alpha, \vec{A}\rest\alpha, \dot{X}^G \cap \alpha) \prec (\omega_1,\vec{A},\dot{X}^G) }\]
is club in $\omega_1$.	
Thus the set $\set{ (\alpha, \dot{X}^G \cap \alpha) }{ \alpha \in C }$ defines a cofinal branch through $T$ in $V[G]$, since for all countable $\alpha$, we have $\dot{X}^G \cap \alpha \in V$ as $\P$ doesn't add reals.

Since this works for any $G$, and since we assumed that for any $(\omega_1,{\le}\omega_1)$-Aronszajn tree, it is not the case that $1_\P$ forces that it has a cofinal branch, it follows that $T$ is not an $(\omega_1,{\le}\omega_1)$-Aronszajn tree. Hence, $T$ has a cofinal branch in $V$, call it $b$. Let
\[X' = \bigcup \set{ x }{ \exists \alpha < \omega_1 \ (\alpha, x) \in b }\]
Since $(\omega_1, \vec{A}, X')$ is the union of an elementary chain of models satisfying $\varphi$, this model must also satisfy $\varphi$ in $V$, and thus $\psi(\vec{A})$ holds in $V$ as witnessed by $X'$.
\end{proof}

The formulation of condition 1.~in the previous lemma seems a little cumbersome, and there is a clearer variant of the lemma, using a slightly modified version of $\mathbf{\Sigma}^1_1(\kappa)$-absoluteness, which we define presently.

\begin{definition}
\label{def:StrongAbsoluteness}
Let $\P$ be a poset and $\kappa$ a cardinal. Then \emph{strong $\P$-generic $\mathbf{\Sigma}^1_1(\kappa)$-absoluteness} is the principle asserting that
for any model $M=\langle M, \vec A\rangle$ of size $\kappa$ for a countable first order language and any $\Sigma^1_1$-sentence $\varphi$ over that language, whenever $G\sub\P$ is generic over $\V$, then $M\models\varphi$ iff $(M\models\varphi)^{\V[G]}$. Similarly, if $\Gamma$ is a forcing class, then \emph{strong $\Gamma$-generic $\mathbf{\Sigma}^1_1(\kappa)$-absoluteness} says that strong $\P$-generic $\mathbf{\Sigma}^1_1(\kappa)$-absoluteness holds for every $\P\in\Gamma$.

If $\P$ and $\Q$ are notions of forcing, then we say that $\P$ and $\Q$ are \emph{forcing equivalent} if they produce the same forcing extensions, that is, for every $\P$-generic $G$, there is a $\Q$-generic $H$ such that $\V[G]=\V[H]$ and vice versa.

Let us also introduce the notation $\P_{{\le}p}$ for the restriction of the ordering of $\P$ to the set of conditions $q\le p$. Call a forcing class $\Gamma$ \emph{natural} if whenever $\P\in\Gamma$ and $p\in\P$, then $\P_{{\le}p}$ is forcing equivalent to some $\Q\in\Gamma$.
\end{definition}

In other words, using $\Sigma^1_1$-upward absoluteness, for a model $M$ as above and a $\Sigma^1_1$ sentence over the language of $M$, \emph{strong} $\P$-generic $\mathbf{\Sigma}^1_1(\kappa)$-absoluteness says that for any $\P$-generic $G$, if $(M\models\varphi)^{\V[G]}$ holds, then $M\models\varphi$ holds. Regular $\P$-generic $\mathbf{\Sigma}^1_1(\kappa)$-absoluteness, on the other hand, says that if for every $\P$-generic $G$, $(M\models\varphi)^{\V[G]}$ holds, then $M\models\varphi$ holds. To clarify the difference, let's consider the class $\Gamma$ of all forcing notions $\P$ such that $\P$ is ccc and $\P$ has an atom. Then $\Gamma$-generic $\mathbf{\Sigma}^1_1(\kappa)$ always holds, because for $\P\in\Gamma$ and $M$, $\varphi$ as before, if $(M\models\varphi)^{\V[G]}$ holds for every $\P$-generic G, then it holds for some $G$ that contains an atom, in which case $\V[G]=\V$, and thus, $M\models\varphi$. On the other hand, strong $\Gamma$-generic $\mathbf{\Sigma}^1_1(\omega_1)$-absoluteness implies that every Aronszajn tree is special, because for an Aronszajn tree $T$, we can consider the lottery sum of a ccc forcing notion specializing $T$ and a trivial forcing, consisting of one atom. That forcing notion is in $\Gamma$. Let $G$ be generic for the nontrivial part of the forcing. If we let $M$ be an elementary submodel of $H_{\omega_1}$ of size $\omega_1$, equipped with $T$ as a predicate, then the existence of a function specializing $T$ can be expressed as a $\Sigma^1_1$ sentence over $M$, and it holds in $\V[G]$, hence in $\V$, which means that $T$ is special in $\V$.

It is easy to see that the notion of forcing equivalence introduced in the previous definition is first order expressible. Clearly, if $\P$ and $\Q$ are forcing equivalent, then (strong) $\P$-generic $\mathbf{\Sigma}^1_1(\kappa)$-absoluteness is equivalent to (strong) $\Q$-generic $\mathbf{\Sigma}^1_1(\kappa)$-absoluteness. The following is essentially a reformulation of \cite[Corollary 3.11]{FuchsRinot:WeakSquareStationaryReflection}.

\begin{fact}
\label{fact:SCforcingIsNatural}
The class of subcomplete forcing notions is natural.
\end{fact}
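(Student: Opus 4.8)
The plan is to show that the class of subcomplete forcing notions is natural in the sense of Definition \ref{def:StrongAbsoluteness}; that is, whenever $\P$ is subcomplete and $p\in\P$, the restriction $\P_{{\le}p}$ is forcing equivalent to some subcomplete $\Q$. The reference \cite[Corollary 3.11]{FuchsRinot:WeakSquareStationaryReflection} does essentially this, so the task is to extract and assemble the relevant pieces. First I would recall the standard fact that forcing below a condition $p$ can always be replaced by a separative quotient or, more directly, that $\P_{{\le}p}$ is itself a forcing notion producing exactly the generic extensions of $\P$ in which $p$ lies in the generic filter. Thus the real content is to find a subcomplete $\Q$ that is forcing equivalent to $\P_{{\le}p}$, not to claim that $\P_{{\le}p}$ is literally subcomplete (which need not even be well-posed if $\P_{{\le}p}$ has no convenient presentation).

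The key step is the appeal to the cited corollary. I would state precisely what that corollary provides: roughly, that subcompleteness is preserved under passing to a dense subforcing, under forcing equivalence, and crucially that if $\P$ is subcomplete then the cone $\P_{{\le}p}$ below any condition is forcing equivalent to a subcomplete poset. Since forcing equivalence is transitive and since (as noted just before the statement in the excerpt) forcing equivalent posets yield equivalent generic $\mathbf{\Sigma}^1_1(\kappa)$-absoluteness, it suffices to exhibit the equivalent subcomplete $\Q$. I would then simply take $\Q$ to be whichever subcomplete poset the corollary names, and verify the defining clause of naturalness: for every $\P$ in the class and every $p\in\P$, $\P_{{\le}p}$ is forcing equivalent to $\Q\in\Gamma$.

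The main obstacle I anticipate is bridging the gap between the exact phrasing of \cite[Corollary 3.11]{FuchsRinot:WeakSquareStationaryReflection} and the definition of naturalness used here. Subcompleteness is notoriously sensitive to the concrete presentation of a poset (as the proposition on $\Coll(\omega_1,\omega_2)\times\Namba$ in the excerpt illustrates), so one cannot glibly assert that restricting to a cone preserves subcompleteness on the nose; the whole point of phrasing naturalness in terms of \emph{forcing equivalence} rather than literal membership is to sidestep this. I would therefore make sure the argument goes through forcing equivalence at every stage and never claims $\P_{{\le}p}$ is subcomplete directly. If the cited corollary is stated only for cones below conditions of a particular form, a short remark explaining why the general case reduces to that form (e.g.\ by absorbing the restriction into the choice of verifying cardinal $\theta$ and the models $N$ in Definition \ref{def:subcompleteness}) would complete the proof; this reduction, rather than any deep new idea, is where the care is needed.
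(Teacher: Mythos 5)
Your proposal is correct and follows essentially the same route as the paper: both rest on \cite[Corollary 3.11]{FuchsRinot:WeakSquareStationaryReflection} and deliberately pass through \emph{forcing equivalence} rather than claiming $\P_{{\le}p}$ is literally subcomplete. The only difference is that the paper makes the bridge you anticipated explicit, noting that the corollary yields $\delta(\P)$-subcompleteness of $\P_{{\le}p}$, hence essential subcompleteness in the sense of \cite[Definition 2.2]{Fuchs:ParametricSubcompleteness}, whence \cite[Observation 2.4]{Fuchs:ParametricSubcompleteness} gives the desired forcing-equivalent subcomplete poset.
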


\begin{proof}
If $\P$ is subcomplete and $p\in\P$, then by \cite[Corollary 3.11]{FuchsRinot:WeakSquareStationaryReflection}, $\P_{{\le}p}$ is $\delta(\P)$-subcomplete (in the sense of \cite{Fuchs:ParametricSubcompleteness}). This means that $\P_{{\le}p}$ is essentially subcomplete in the sense of \cite[Definition 2.2]{Fuchs:ParametricSubcompleteness}, and \cite[Observation 2.4]{Fuchs:ParametricSubcompleteness} then implies that $\P_{{\le}p}$ is forcing equivalent to a subcomplete forcing.
\end{proof}

Let us make a simple observation relating strong absoluteness to the previously introduced version of absoluteness.

\begin{observation}
\label{observation:EquivalenceBtwStrongAndRegularAbsForNaturalForcingClasses}
Let $\kappa$ be a cardinal, let $\P$ be a forcing notion, and let $\Gamma$ be a forcing class.
\begin{enumerate}[label=(\arabic*)]
  \item
  \label{item:StrongIsForAllSpecializations}
  Strong $\P$-generic $\mathbf{\Sigma}^1_1(\kappa)$-absoluteness is equivalent to saying that $\{\P_{{\le}p}\st p\in\P\}$-generic $\mathbf{\Sigma}^1_1(\kappa)$-absoluteness holds.
  \item
  \label{item:StrongObsoleteForNaturalGamma}
  If $\Gamma$ is natural, then strong $\Gamma$-generic $\mathbf{\Sigma}^1_1(\kappa)$-absoluteness is equivalent to $\Gamma$-generic $\mathbf{\Sigma}^1_1(\kappa)$-absoluteness.
  \item
  \label{item:ClassesOfInterestAreNatural}
  If $\Gamma$ is either the class of all c.c.c., proper, semi-proper, countably closed, stationary set preserving or subcomplete forcing notions, then $\Gamma$ is natural, and hence strong $\Gamma$-generic $\mathbf{\Sigma}^1_1(\kappa)$-absoluteness is equivalent to $\Gamma$-generic $\mathbf{\Sigma}^1_1(\kappa)$-absoluteness.
\end{enumerate}
\end{observation}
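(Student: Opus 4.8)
The plan is to reduce everything to part \ref{item:StrongIsForAllSpecializations} together with the remark, recorded just before Fact \ref{fact:SCforcingIsNatural}, that forcing-equivalent posets satisfy the same (strong) $\mathbf{\Sigma}^1_1(\kappa)$-absoluteness. Throughout I would use two elementary facts from the excerpt. First, $\Sigma^1_1$-statements are upward absolute, so both the strong and the regular principles are really assertions about the downward direction only; in particular strong $\P$-generic absoluteness says ``$(M\models\varphi)^{\V[G]}\implies M\models\varphi$ for every $\P$-generic $G$,'' while regular $\P$-generic absoluteness says ``$(M\models\varphi)^{\V[G]}$ for \emph{all} $\P$-generic $G$ $\implies M\models\varphi$.'' From this the implication strong $\implies$ regular is immediate for each fixed $\P$ (apply the strong principle to a single generic), and hence strong $\Gamma$-generic absoluteness always implies $\Gamma$-generic absoluteness, with no appeal to naturalness. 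Second, I would use the standard observation that forcing with $\P_{{\le}p}$ is the same as forcing with $\P$ below $p$, so that the $\P_{{\le}p}$-generic extensions are precisely the extensions $\V[G]$ by $\P$-generic $G$ with $p\in G$.

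For part \ref{item:StrongIsForAllSpecializations}, the forward direction is routine: if strong $\P$-generic absoluteness holds, then for each $p$, whenever $1_{\P_{{\le}p}}$ forces $M\models\varphi$ I pick any single $\P$-generic $G\ni p$, read off $(M\models\varphi)^{\V[G]}$, and apply the strong principle to conclude $M\models\varphi$; this is $\P_{{\le}p}$-generic absoluteness. For the converse I would assume $\{\P_{{\le}p}\st p\in\P\}$-generic absoluteness, let $G$ be $\P$-generic with $(M\models\varphi)^{\V[G]}$, write $\varphi=\exists X\,\psi(X)$ with $\psi$ first order, and use the forcing theorem to extract a condition $p\in G$ forcing $\check M\models\varphi$. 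Then $(M\models\varphi)^{\V[G']}$ holds for every $\P$-generic $G'\ni p$, i.e.\ in every $\P_{{\le}p}$-generic extension, so $\P_{{\le}p}$-generic absoluteness yields $M\models\varphi$.

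Part \ref{item:StrongObsoleteForNaturalGamma} then follows formally. Having noted strong $\implies$ regular above, I would only need the converse: assuming $\Gamma$ natural and $\Gamma$-generic absoluteness, fix $\P\in\Gamma$; by part \ref{item:StrongIsForAllSpecializations} it suffices to verify $\P_{{\le}p}$-generic absoluteness for each $p$. Naturalness gives $\Q\in\Gamma$ forcing equivalent to $\P_{{\le}p}$, $\Q$-generic absoluteness holds by hypothesis, and the remark transfers it to $\P_{{\le}p}$. For part \ref{item:ClassesOfInterestAreNatural}, I would argue that for the classes of c.c.c., proper, semi-proper, countably closed and stationary-set-preserving forcings, $\P_{{\le}p}$ literally lies in the same class, since antichains, descending sequences and the relevant generic objects witnessing these properties for $\P_{{\le}p}$ are already such objects for $\P$; naturalness is then trivial. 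For subcomplete forcing, naturalness is exactly Fact \ref{fact:SCforcingIsNatural}. The stated equivalence of the two absoluteness principles for all these classes is then the instance of part \ref{item:StrongObsoleteForNaturalGamma}.

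I do not expect a serious obstacle; the one step requiring genuine care is the converse of part \ref{item:StrongIsForAllSpecializations}, where one must pass from the single extension $\V[G]$ to a condition $p\in G$ that \emph{forces} the $\Sigma^1_1$-sentence over $\check M$, so that the hypothesis of $\P_{{\le}p}$-generic absoluteness, which quantifies over all $\P_{{\le}p}$-generic extensions, is actually satisfied. The inheritance claims in part \ref{item:ClassesOfInterestAreNatural} are routine but should be checked class by class.
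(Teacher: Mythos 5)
Your proposal is correct and takes essentially the same approach as the paper: both directions of part \ref{item:StrongIsForAllSpecializations} are argued exactly as in the paper's proof (in particular, extracting a condition $p\in G$ forcing $\check{M}\models\varphi$ and noting that then $1_{\P_{{\le}p}}$ forces it), and parts \ref{item:StrongObsoleteForNaturalGamma} and \ref{item:ClassesOfInterestAreNatural} are derived from part \ref{item:StrongIsForAllSpecializations} together with the transfer of absoluteness along forcing equivalence and Fact \ref{fact:SCforcingIsNatural}, just as the paper does. The extra details you supply --- the explicit strong-implies-regular observation and the class-by-class check that $\P_{{\le}p}$ remains c.c.c., proper, etc. --- are points the paper treats as immediate, and they do not change the argument.
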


\begin{proof}
For \ref{item:StrongIsForAllSpecializations}, assume that strong $\P$-generic $\mathbf{\Sigma}^1_1(\kappa)$-absoluteness holds, let $M$ be a $\kappa$-sized model of a countable first order language, let $\varphi$ be a $\Sigma^1_1$-sentence of in that language, and let $p\in\P$ be a condition. Assume that $1_{\P_{{\le}p}}$ forces (with respect to $\P_{{\le}p}$) that $M\models\varphi$. If $G\ni p$ is $\P$-generic, then in $\V[G]$, it is the case that $M\models\varphi$. Thus, by strong $\P$-generic $\mathbf{\Sigma}^1_1(\kappa)$-absoluteness, it is true in $\V$ that $M\models\varphi$. This shows that $\{\P_{{\le}p}\st p\in\P\}$-generic $\mathbf{\Sigma}^1_1(\kappa)$-absoluteness holds.

For the converse, assume that $\{\P_{{\le}p}\st p\in\P\}$-generic $\mathbf{\Sigma}^1_1(\kappa)$-absoluteness holds, let $M$ and $\varphi$ be as before, let $G$ be $\P$-generic over $\V$, and assume that in $\V[G]$, it is the case that $M\models\varphi$. There is then a condition $p\in G$ which forces that $M\models\varphi$. But then it follows that $1_{\P_{{\le}p}}$ forces that $M\models\varphi$. Hence, by
$\P_{{\le}p}$-generic $\mathbf{\Sigma}^1_1(\kappa)$-absoluteness, it follows that $M\models\varphi$ holds in $\V$.

Now \ref{item:StrongObsoleteForNaturalGamma} and \ref{item:ClassesOfInterestAreNatural} follow immediately from \ref{item:StrongIsForAllSpecializations}, using Fact \ref{fact:SCforcingIsNatural} and the remark preceding this fact.
\end{proof}

All of this could be done for $\Gamma$-generic $\mathbf{\Sigma}^1_n$-absoluteness as well, of course, but we will not need this generality here. We obtain the following version of the previous lemma.

\begin{lemma}
\label{lem:CharacterizationOfStrongGenericAbsolutenessUnderCH}
Assume $\CH$. Let $\P$ be a forcing notion. Then the following are equivalent.
\begin{enumerate}[label=(\arabic*)]
\item
\label{item:NoNewRealsAndBranches2}
$\P$ preserves $(\omega_1,{\le}\omega_1)$-Aronszajn trees and does not add reals.
\item
\label{item:Sigma1-1-Absoluteness2}
Strong $\P$-generic $\mathbf{\Sigma}_1^1(\omega_1)$-absoluteness holds.
\end{enumerate}
\end{lemma}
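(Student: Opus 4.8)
The plan is to deduce this lemma from Lemma~\ref{lem:CharacterizationOfGenericAbsolutenessUnderCH} by passing to the restrictions $\P_{{\le}p}$ and invoking Observation~\ref{observation:EquivalenceBtwStrongAndRegularAbsForNaturalForcingClasses}\ref{item:StrongIsForAllSpecializations}. The conceptual point is that the difference between the \emph{regular} and the \emph{strong} versions of $\mathbf{\Sigma}^1_1(\omega_1)$-absoluteness is exactly whether one quantifies over conditions $p\in\P$, and that this quantification is precisely what upgrades the ``$1_\P$ does not force'' hypotheses appearing in condition~\ref{item:NoNewRealsAndBranches} of Lemma~\ref{lem:CharacterizationOfGenericAbsolutenessUnderCH} into honest preservation statements.

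First I would recall, via Observation~\ref{observation:EquivalenceBtwStrongAndRegularAbsForNaturalForcingClasses}\ref{item:StrongIsForAllSpecializations}, that strong $\P$-generic $\mathbf{\Sigma}^1_1(\omega_1)$-absoluteness is equivalent to $\{\P_{{\le}p}\st p\in\P\}$-generic $\mathbf{\Sigma}^1_1(\omega_1)$-absoluteness, that is, to the assertion that $\P_{{\le}p}$-generic $\mathbf{\Sigma}^1_1(\omega_1)$-absoluteness holds for every $p\in\P$. Since we are assuming \CH{} and each $\P_{{\le}p}$ is itself a forcing notion, Lemma~\ref{lem:CharacterizationOfGenericAbsolutenessUnderCH} applies verbatim to $\P_{{\le}p}$: for each $p$, $\P_{{\le}p}$-generic $\mathbf{\Sigma}^1_1(\omega_1)$-absoluteness is equivalent to condition~\ref{item:NoNewRealsAndBranches} of that lemma for $\P_{{\le}p}$, namely that for every $(\omega_1,{\le}\omega_1)$-Aronszajn tree $T$, $1_{\P_{{\le}p}}$ does not force $\check T$ to have a cofinal branch, and $1_{\P_{{\le}p}}$ does not force the existence of a new real. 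Hence condition~\ref{item:Sigma1-1-Absoluteness2} of the present lemma is equivalent to the conjunction, over all $p\in\P$, of these two ``$1_{\P_{{\le}p}}$ does not force'' statements.

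The remaining step, which is the only one requiring an argument rather than bookkeeping, is to identify this conjunction with condition~\ref{item:NoNewRealsAndBranches2}. Here I would use the standard fact that $1_{\P_{{\le}p}}\forces_{\P_{{\le}p}}\psi$ iff $p\forces_\P\psi$, so that ``$1_{\P_{{\le}p}}$ does not force $\psi$, for every $p$'' is equivalent to ``no condition of $\P$ forces $\psi$''. Applying this with $\psi$ the statement ``$\check T$ has a cofinal branch'': no condition forces $\check T$ to have a cofinal branch iff no $\P$-generic extension adds a cofinal branch to $T$ --- indeed, if some extension $\V[G]$ added one, a condition $p\in G$ together with a name for the branch would force it --- which is exactly the statement that $\P$ preserves $T$. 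Applying it with $\psi$ the statement ``there is a new real'' likewise yields that no condition forces a new real iff $\P$ adds no reals. Quantifying over all $(\omega_1,{\le}\omega_1)$-Aronszajn trees $T$, the conjunction over $p$ becomes precisely ``$\P$ preserves $(\omega_1,{\le}\omega_1)$-Aronszajn trees and adds no reals'', i.e.\ condition~\ref{item:NoNewRealsAndBranches2}. Chaining the three equivalences then gives \ref{item:NoNewRealsAndBranches2}$\iff$\ref{item:Sigma1-1-Absoluteness2}.

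I do not anticipate a serious obstacle, since once Lemma~\ref{lem:CharacterizationOfGenericAbsolutenessUnderCH} and Observation~\ref{observation:EquivalenceBtwStrongAndRegularAbsForNaturalForcingClasses} are in hand, the argument reduces to correctly tracking the quantifier over conditions. The one point I would handle with care is the direction asserting that ``no condition forces a cofinal branch'' implies preservation: it relies on the upward absoluteness of the $\Sigma^1_1$ statement ``$\check T$ has a cofinal branch'', so that a branch appearing in \emph{some} extension is always witnessed by an actual condition, allowing the passage from ``$1_{\P_{{\le}p}}$ does not force for all $p$'' back to genuine preservation in every extension.
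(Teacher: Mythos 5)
Your proposal is correct and matches the paper's intended derivation: the paper states this lemma without a separate proof precisely because it follows from Lemma~\ref{lem:CharacterizationOfGenericAbsolutenessUnderCH} together with Observation~\ref{observation:EquivalenceBtwStrongAndRegularAbsForNaturalForcingClasses}.\ref{item:StrongIsForAllSpecializations}, exactly as you argue, with the passage from ``no condition forces $\psi$'' to genuine preservation handled by the forcing (truth) lemma --- which, incidentally, is the fact doing the work in your final paragraph, rather than upward absoluteness per se.
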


There is a natural version of the second order absoluteness properties introduced where one talks about a certain canonical structure $M$, defined by a formula to be re-interpreted in $\V[G]$. For example, let's define $\P$-generic $\mathbf{\Sigma}^1_1(H_{\omega_1})$-absoluteness to mean
\[(\langle H_{\omega_1},\vec{A}\rangle \models \varphi)^V \iff (\langle H_{\omega_1},\vec{A}\rangle \models \varphi)^{V^\P}\]
for any finite set of finitary predicates $\vec{A}$ and any $\Sigma^1_1$-sentence $\varphi$, where
$H_{\omega_1}$ is \emph{re-interpreted} in $\V^\P$ on the right hand side, rather than working with the same model on both sides. To be clear, the statement on the right hand side of the displayed equivalence means that $1_\P$ forces that $\langle H_{\omega_1},\vec{A}\rangle$, in the sense of the forcing extension, satisfies $\varphi$. Further, $\Gamma$-generic $\mathbf{\Sigma}^1_1(H_{\omega_1})$-absoluteness means that this holds for every $\P\in\Gamma$.

It turns out that $\Gamma$-generic $\mathbf{\Sigma}^1_1(H_{\omega_1})$-absoluteness
is equivalent to $\Gamma$-generic $\mathbf{\Sigma}^1_1(\kappa)$-absoluteness, where $\kappa=2^\omega$, if $\Gamma$ is a natural forcing class. Here, and in the following, we will indicate second order quantification by upper case variables.
To see the claimed equivalence, first suppose $\Gamma$-generic $\mathbf{\Sigma}^1_1(H_{\omega_1})$-absoluteness holds. It follows that $H_{\omega_1}=H_{\omega_1}^{\V[G]}$ whenever $G$ is generic for some $\P\in\Gamma$.
Clearly, $H_{\omega_1}\sub H_{\omega_1}^{\V^\P}$, for any $\P$.
But if we had $H_{\omega_1}^\V\subsetneqq M^{\V[G]}$, then we could take $A=H_{\omega_1}^\V$, and in $\V[G]$ it would be true that $\kla{H_{\omega_1}^{\V[G]},A}\models\exists x\quad\neg\dot{A}(x)$, but clearly this is not true in $\V$. So it follows that
$\Gamma$-generic $\mathbf{\Sigma}^1_1(\kappa)$-absoluteness also holds, because $H_{\omega_1}^\V$ has size $\kappa$ and doesn't change by forcing in $\Gamma$.

To see the converse, assume that $\Gamma$-generic $\mathbf{\Sigma}^1_1(\kappa)$-absoluteness holds. We claim that it follows that $H_{\omega_1}=H_{\omega_1}^{\V[G]}$ whenever $G$ is generic for some $\P\in\Gamma$. Suppose otherwise. Then $H_{\omega_1}\subsetneqq H_{\omega_1}^{\V[G]}$ for some $G$ generic for some $\P\in\Gamma$, which means that $\V[G]$ has a new real. But then, in $\V[G]$, the second order formula $\exists X\sub\omega\quad\forall x\quad x\neq X$, holds in the structure $\kla{H_{\omega_1}^\V,\in}$, while this is not true in $\V$. Thus, it follows that $\Gamma$-generic $\mathbf{\Sigma}^1_1(H_{\omega_1})$-absoluteness holds.

\begin{remark}
\label{remark:CHorNonCH}
In the Lemmas \ref{lem:CharacterizationOfGenericAbsolutenessUnderCH} and \ref{lem:CharacterizationOfStrongGenericAbsolutenessUnderCH}, $\mathbf{\Sigma}_1^1(\omega_1)$-absoluteness can be replaced with $\mathbf{\Sigma}_1^1(H_{\omega_1})$-absoluteness, since under $\CH$, $H_{\omega_1}$ has size $\omega_1$. On the other hand, if $\CH$ fails, then $\Add(\omega_1,1)$-generic $\mathbf{\Sigma}_1^1(H_{\omega_1})$-absoluteness fails, even though $\Add(\omega_1,1)$, being countably closed, preserves Aronszajn trees of any width. This is because $\Add(\omega_1,1)$ forces \CH, and this can be expressed as a $\mathbf{\Sigma}^1_1(H_{\omega_1})$ statement true in $\V^{\Add(\omega_1,1)}$ but false in $\V$.
\end{remark}


We will now explore a fruitful connection between subcomplete generic $\mathbf{\Sigma}^1_1(\omega_1)$-absoluteness and the bounded subcomplete forcing axiom.

The bounded forcing axiom was originally introduced in \cite{GoldsternShelah:BPFA} in the context of proper forcing. The bounded forcing axiom for a poset $\P$ says that if $\mathbb{B}$ is the complete Boolean algebra of $\P$, then for any collection of up to $\omega_1$ many maximal antichains in $\mathbb{B}$, each having size at most $\omega_1$, there is a filter in $\mathbb{B}$ that meets each antichain. The bounded forcing axiom for a class $\Gamma$ of forcings says that each $\P\in\Gamma$ satisfies the bounded forcing axiom for $\P$. We write \BSCFA{} for the bounded forcing axiom for the class of all subcomplete forcings.
The following is a version of a characterization of the bounded forcing axiom, due to Bagaria, tailored to the present context.

\begin{theorem}[{\cite[Theorem 5]{Bagaria:BFAasPrinciplesOfGenAbsoluteness}}]
\label{thm:bagaria}
Let $\P$ be a forcing notion. Then the following are equivalent:
\begin{enumerate}[label=(\arabic*)]
\item
\label{item:BFAforP}
The bounded forcing axiom holds for $\P$.
\item
\label{item:Sigma1Absoluteness}
$\P$-generic $\Sigma_1(H_{\omega_2})$-absoluteness holds, meaning:
if $\varphi(\vec{x})$ is a $\Sigma_1$-formula in the language of set theory and $\vec{a}\in H_{\omega_2}$, then
$\varphi(\vec{a})$ iff $\forces_\P\varphi(\vec{a})$.
\end{enumerate}
\end{theorem}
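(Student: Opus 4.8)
The plan is to prove the equivalence of the bounded forcing axiom for $\P$ with $\P$-generic $\Sigma_1(H_{\omega_2})$-absoluteness by exploiting the standard correspondence between $H_{\omega_2}$ and the structure of $\omega_1$-sized antichains in the completion $\B$ of $\P$. The key technical tool is that a $\Sigma_1$-statement over $H_{\omega_2}$ with parameters from $H_{\omega_2}$ asserts the existence of a witness $w$ of size at most $\omega_1$ together with a transitive (or well-founded) set containing $w$ and the parameters in which an absolute condition holds; since every element of $H_{\omega_2}$ is coded by a subset of $\omega_1$, and the satisfaction of a $\Delta_0$-formula is absolute between $\V$, $\V[G]$ and suitable transitive sets, the whole $\Sigma_1$-assertion can be encoded in terms of the behavior of $\omega_1$-many maximal antichains in $\B$, each of size $\le\omega_1$.

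For the direction \ref{item:Sigma1Absoluteness}$\implies$\ref{item:BFAforP}, I would start with a family $\langle A_\xi \st \xi<\omega_1\rangle$ of maximal antichains in $\B$, each of size at most $\omega_1$, and argue that the existence of a filter meeting all of them is itself a $\Sigma_1(H_{\omega_2})$-assertion with parameters coding the $A_\xi$. In any generic extension $\V[G]$ by $\P$, the generic filter $G$ (read off in $\B$) meets every ground-model maximal antichain, so such a filter provably exists in $\V^\P$; hence it exists in $\V$ by $\Sigma_1(H_{\omega_2})$-absoluteness, which is exactly the bounded forcing axiom. The care here is to verify that ``there is a filter meeting each $A_\xi$'' is genuinely $\Sigma_1$ over $H_{\omega_2}$: the filter, being a subset of $\omega_1$-many antichains each of size $\omega_1$, can be coded by a single subset of $\omega_1$, so it lives in $H_{\omega_2}$, and the meeting condition is $\Delta_0$ in the relevant coded objects.

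For the converse \ref{item:BFAforP}$\implies$\ref{item:Sigma1Absoluteness}, I would use upward $\Sigma_1$-absoluteness (which always holds, since a witness in $\V$ persists to $\V^\P$) to reduce to showing that if $\forces_\P \varphi(\vec a)$ for a $\Sigma_1$-formula $\varphi$ and $\vec a\in H_{\omega_2}$, then $\varphi(\vec a)$ holds in $\V$. The idea is that $\forces_\P\varphi(\vec a)$ means that in $\V^\P$ there is a witnessing set $w$ of size $\le\omega_1$; one extracts, for each relevant piece of information about $w$ and the verification of $\varphi$, a maximal antichain in $\B$ deciding it, collecting at most $\omega_1$-many such antichains of size $\le\omega_1$. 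Applying the bounded forcing axiom yields a filter $F$ in $\B$ meeting all of them, and from $F$ one reconstructs, in $\V$, a genuine witness $w'$ to $\varphi(\vec a)$: the filter decides enough $\Delta_0$-facts to pin down $w'$ as a subset of $\omega_1$ satisfying the required absolute property. The main obstacle, and the step deserving the most care, is precisely this reconstruction: one must choose the antichains so that the filter $F$ delivers a well-founded, internally consistent coded witness — that is, so that the decoded relation is truly extensional and well-founded and the decoded object really satisfies the $\Delta_0$-matrix of $\varphi$ — rather than merely a formally consistent but ill-founded ``pseudo-witness.'' Since the present context only requires the statement as attributed to Bagaria, I would cite \cite{Bagaria:BFAasPrinciplesOfGenAbsoluteness} for the full verification of this reconstruction and simply indicate the correspondence above.
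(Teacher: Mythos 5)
The paper never proves this theorem: it is quoted as Bagaria's Theorem 5, with the citation standing in for the proof, and your proposal ultimately rests on the same citation, so your treatment is consistent with the paper's. Your sketch is moreover a faithful outline of Bagaria's actual argument, and you correctly identify its crux, namely arranging the antichains so that the object decoded from the BFA-filter is a genuinely well-founded witness rather than an ill-founded pseudo-witness. One refinement should you ever write out the direction (2)$\implies$(1): coding the antichains $A_\xi$ together with the restricted ordering alone is not quite enough, because a pairwise compatible set meeting every $A_\xi$ need not have the finite intersection property and so need not generate a filter of $\mathbb{B}$; the standard fix is to close $\bigcup_\xi A_\xi$ under finite Boolean meets and code that structure of size ${\le}\omega_1$ instead, so that ``$F$ is a filter'' becomes a $\Delta_0$ condition on the coded object (the trace of the generic on the meet-closure verifies it in $\V[G]$, and any such $F$ found in $\V$ really does generate a filter of $\mathbb{B}$ meeting each $A_\xi$).
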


A very useful way of reformulating this theorem is as follows.

\begin{theorem}
\label{thm:bagaria2}
Let $\P$ be a forcing notion. Then the following are equivalent:
\begin{enumerate}[label=(\arabic*)]
\item
\label{item:BFAforP2}
The strong bounded forcing axiom holds for $\P$, meaning that the bounded forcing axiom holds for $\{\P_{{\le}p}\st p\in\P\}$.
\item
\label{item:Sigma1Absoluteness2}
Strong $\P$-generic $\Sigma_1(H_{\omega_2})$-absoluteness holds:
if $G$ is $\P$-generic over $\V$, then \[\kla{H_{\omega_2},\in}\prec_{\Sigma_1}\kla{H_{\omega_2},\in}^{\V[G]}.\]
\end{enumerate}
\end{theorem}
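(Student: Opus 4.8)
The plan is to deduce this directly from Bagaria's Theorem \ref{thm:bagaria} by applying it, not to $\P$ itself, but to each restriction $\P_{{\le}p}$, and then to package the resulting family of equivalences using the same ``strong version = version for all restrictions'' bookkeeping that underlies Observation \ref{observation:EquivalenceBtwStrongAndRegularAbsForNaturalForcingClasses}\ref{item:StrongIsForAllSpecializations}. Thus I would not reprove anything forcing-theoretic: the content is purely combining a per-condition application of Theorem \ref{thm:bagaria} with the translation between $\{\P_{{\le}p}\st p\in\P\}$-generic absoluteness and strong $\P$-generic absoluteness.

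First I would unfold \ref{item:BFAforP2}. By definition, the strong bounded forcing axiom for $\P$ asserts that the bounded forcing axiom holds for every $\P_{{\le}p}$, $p\in\P$. Applying the equivalence \ref{item:BFAforP}$\iff$\ref{item:Sigma1Absoluteness} of Theorem \ref{thm:bagaria} to each poset $\P_{{\le}p}$ separately, \ref{item:BFAforP2} is equivalent to the statement that $\P_{{\le}p}$-generic $\Sigma_1(H_{\omega_2})$-absoluteness holds for every $p\in\P$; that is, for every $p\in\P$, every $\Sigma_1$-formula $\varphi$ and every $\vec a\in H_{\omega_2}$ we have $\varphi(\vec a)\iff\forces_{\P_{{\le}p}}\varphi(\vec a)$.

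It then remains to show that this family of absoluteness statements is equivalent to \ref{item:Sigma1Absoluteness2}, namely $\kla{H_{\omega_2},\in}\prec_{\Sigma_1}\kla{H_{\omega_2},\in}^{\V[G]}$ for every $\P$-generic $G$. Here I would invoke the standard fact that, for parameters $\vec a\in H_{\omega_2}$ and $\varphi$ a $\Sigma_1$-formula, $\varphi(\vec a)$ holds (in $\V$, or in any $\V[G]$) iff $\kla{H_{\omega_2},\in}\models\varphi(\vec a)$ there, since a $\Sigma_1$-witness can always be collapsed into a transitive set of size $\omega_1$ fixing $\vec a$; this is what lets me pass freely between the two phrasings. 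For the direction from the family of statements to \ref{item:Sigma1Absoluteness2}, $\Sigma_1$-upward absoluteness gives $\kla{H_{\omega_2},\in}^\V\models\varphi(\vec a)\implies\kla{H_{\omega_2},\in}^{\V[G]}\models\varphi(\vec a)$, and conversely, if $\kla{H_{\omega_2},\in}^{\V[G]}\models\varphi(\vec a)$ then $\V[G]\models\varphi(\vec a)$, so some $p\in G$ forces $\varphi(\vec a)$, whence $\forces_{\P_{{\le}p}}\varphi(\vec a)$ and the assumed $\P_{{\le}p}$-absoluteness yields $\varphi(\vec a)$ in $\V$. For the reverse direction, given $p\in\P$ with $\forces_{\P_{{\le}p}}\varphi(\vec a)$, I would pick a $\P$-generic $G\ni p$ (so that $\V[G]$ is a $\P_{{\le}p}$-extension), conclude $\kla{H_{\omega_2},\in}^{\V[G]}\models\varphi(\vec a)$, and apply \ref{item:Sigma1Absoluteness2} to get $\varphi(\vec a)$ in $\V$; the converse implication $\varphi(\vec a)\implies\forces_{\P_{{\le}p}}\varphi(\vec a)$ is again just $\Sigma_1$-upward absoluteness.

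The only real obstacle is the bookkeeping in the previous paragraph: one must be careful that ``$\forces_{\P_{{\le}p}}\varphi(\vec a)$'' in Bagaria's phrasing, which refers to $\Sigma_1$-truth over the extension, is correctly read off from $\kla{H_{\omega_2},\in}^{\V[G]}$, and that a condition $p$ forcing $\varphi(\vec a)$ over $\P$ corresponds to $1_{\P_{{\le}p}}$ forcing it over $\P_{{\le}p}$. Both points are handled by the $H_{\omega_2}$-correctness of $\Sigma_1$-truth and the trivial identification of $\P$-generics through $p$ with $\P_{{\le}p}$-generics, so no genuinely new difficulty arises beyond organizing these observations.
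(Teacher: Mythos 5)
Your proposal is correct and is essentially the argument the paper intends: Theorem \ref{thm:bagaria2} is stated there without proof, as a reformulation of Bagaria's Theorem \ref{thm:bagaria} obtained exactly as you do it, by applying that theorem to each restriction $\P_{{\le}p}$ and translating between $\{\P_{{\le}p}\st p\in\P\}$-generic absoluteness and the strong form via the same bookkeeping as in Observation \ref{observation:EquivalenceBtwStrongAndRegularAbsForNaturalForcingClasses}.\ref{item:StrongIsForAllSpecializations}. The two technical points you flag --- that $\Sigma_1$-truth with parameters in $H_{\omega_2}$ is correctly computed by $\kla{H_{\omega_2},\in}$ (L\'evy reflection, in $\V$ and in $\V[G]$ alike), and that $p\forces_\P\varphi(\vec a)$ corresponds to $1_{\P_{{\le}p}}\forces_{\P_{{\le}p}}\varphi(\vec a)$ through the identification of $\P$-generics containing $p$ with $\P_{{\le}p}$-generics --- are handled correctly and are all that is needed.
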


%

Of course, in the previous theorem, $H_{\omega_2}$ is reinterpreted in $\V[G]$ on the right hand side of \ref{item:Sigma1Absoluteness}.. Thus, \ref{item:Sigma1Absoluteness2}.~of Theorem \ref{thm:bagaria2} can be taken as a characterization of the bounded forcing axiom for a natural forcing class $\Gamma$.

We will show next that property \ref{item:Sigma1Absoluteness}.~is equivalent to $\P$-generic (or strong $\P$-generic in the case of Theorem \ref{thm:bagaria2}) $\mathbf{\Sigma}^1_1(\omega_1)$-absoluteness. For this, we will need an observation that is probably a folklore fact, but since it is important in the present context, we will provide a proof. We will work with the following natural way of coding elements of $H_{\omega_2}$.

\begin{definition}
A \emph{code} is a pair $\kla{R,\alpha}$, where $R\subset\omega_1\times\omega_1$, $\alpha<\omega_1$ and $\kla{\omega_1,R}$ is extensional and well-founded.

If $\kla{R,\alpha}$ is a code, then let $U_R$, $\sigma_R$ be the unique objects (given by Mostowski's isomorphism theorem) such that $U_R$ is transitive and $\sigma_R:\kla{U_R,{\in}\rest U_R}\To\kla{\omega_1,R}$ is an isomorphism. The \emph{set coded by $\kla{R,\alpha}$} is
\[c_{R,\alpha}=\sigma_R^{-1}(\alpha).\]
\end{definition}

Clearly, every member of $H_{\omega_2}$ has a code, and only members of $H_{\omega_2}$ have codes. Using codes, $\Sigma_1$ statements over $\kla{H_{\omega_2},\in}$ can be translated into $\Sigma^1_1$ statements over $\kla{H_{\omega_1},\in}$.

\begin{observation}
\label{obs:Translation}
Let $\varphi(v_0,\ldots,v_{n-1})$ be a $\Sigma_1$-formula. Then there is a $\Sigma^1_1$-formula $\varphi^c$ with free variables $X_0,x_0\ldots,X_{n-1},x_{n-1}$ (upper case variables being second order and lower case ones being first order) such that the following holds.
Let $a_0,\ldots,a_{n-1}\in H_{\omega_2}$, and let $\kla{R_0,\alpha_0},\ldots,\kla{R_{n-1},\alpha_{n-1}}$ be codes, such that $a_i$ is coded by $\kla{R_i,\alpha_i}$, for $i<n$. Then
\[\kla{H_{\omega_2},\in}\models\varphi(a_0,\ldots,a_{n-1}) \iff
\kla{L_{\omega_1},\in}\models\varphi^c(R_0,\alpha_0,\ldots,R_{n-1},\alpha_{n-1}).\]
\end{observation}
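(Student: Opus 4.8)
The plan is to reduce to the basic case of a $\Sigma_1$-formula $\varphi(v_0,\ldots,v_{n-1})$ of the form $\exists y\,\psi(y,v_0,\ldots,v_{n-1})$ with $\psi$ being $\Delta_0$, and to exploit that $H_{\omega_2}$ consists exactly of the sets that are hereditarily of size at most $\omega_1$. Writing $\chi$ for this $\varphi$, for $a_0,\ldots,a_{n-1}\in H_{\omega_2}$ we have $\langle H_{\omega_2},\in\rangle\models\chi(\vec a)$ if and only if there is a transitive set $t$ of size at most $\omega_1$ with $\vec a\in t$ and $t\models\chi(\vec a)$; here the ``only if'' uses that a witness $y$ may be absorbed into $t=\operatorname{trcl}(\{y\}\cup\{a_i:i<n\})$, and $\Delta_0$-absoluteness guarantees that $\psi$ is computed correctly in $t$. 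Such a transitive $t$ can be coded by a binary relation $E$ on $\omega_1$ via any bijection $e$ of $\omega_1$ (or an initial segment) onto $t$, setting $\gamma\mathrel E\delta\iff e(\gamma)\in e(\delta)$; then $E$ is extensional and well-founded, and its transitive collapse is $t$. The formula $\varphi^c$ will assert the existence of such a coding relation together with enough auxiliary data to (i) match the parameters and (ii) certify well-foundedness.

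The satisfaction requirement and the parameter matching are straightforward to render first order over $\langle L_{\omega_1},\in\rangle$ with $E$ as a second-order parameter. For the matching, I would existentially quantify, for each $i<n$, a first-order point $q_i\in\omega_1$ together with a second-order function $\pi_i\colon\omega_1\to\omega_1$ asserted to be an isomorphism between the hereditary $R_i$-predecessor structure of $\alpha_i$ and the hereditary $E$-predecessor structure of $q_i$; since well-founded extensional structures with a designated top element collapse uniquely, this forces the collapse image of $q_i$ to equal $a_i$. For the satisfaction clause I would relativize $\chi$ to $E$, turning $\in$ into $E$, bounded quantifiers into $E$-bounded quantifiers, and the single unbounded $\exists y$ into a first-order $\exists\gamma\in\omega_1$ ranging over the field of $E$; once $\langle\omega_1,E\rangle$ is known to be extensional and well-founded, $\chi^E(q_0,\ldots,q_{n-1})$ holds iff the collapse $t$ satisfies $\chi(\vec a)$. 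All of these clauses are first order over $\langle L_{\omega_1},\in\rangle$ in the second-order parameters $E,\pi_0,\ldots,\pi_{n-1}$ and the given $R_i$, and involve only finitely many extra second-order existentials, so they stay within $\Sigma^1_1$.

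The main obstacle is clause (ii): well-foundedness of a relation on the uncountable set $\omega_1$ is in general a genuinely $\Pi^1_1$ property (``no infinite descending sequence''), and cannot be asserted positively inside a $\Sigma^1_1$ formula, nor can it simply be replaced by the existence of a rank function into the ordinals of $L_{\omega_1}$, since the collapse $t$ may have height $\ge\omega_1$ (for instance when some $a_i$ has rank $\ge\omega_1$). The device I would use to get around this is to certify well-foundedness by a \emph{countable filtration}: existentially quantify an increasing, continuous chain $\langle s_\beta:\beta<\omega_1\rangle$ of countable subsets of $\omega_1$ with union $\omega_1$, together with a second-order object coding rank functions $\rho_\beta\colon s_\beta\to\omega_1$, and require, first order, that each $\rho_\beta$ be order preserving from $E\restriction s_\beta$ into the ordinals. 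Each $E\restriction s_\beta$ is then genuinely well-founded (being order-embeddable into the ordinals), and since any infinite $E$-descending sequence has countable range and hence lies inside some $s_\beta$ by continuity, the full relation $E$ is forced to be well-founded as well. Crucially, this adds only existential second-order quantifiers and a first-order matrix, so $\varphi^c$ remains $\Sigma^1_1$; and because each piece is countable, a rank function into $\omega_1=\operatorname{Ord}^{L_{\omega_1}}$ always exists, so in the forward direction the required filtration and rank functions can be found. With this in hand both directions follow: from $\langle H_{\omega_2},\in\rangle\models\chi(\vec a)$ one builds $E$, the filtration, the rank functions and the matching isomorphisms out of $t$ and $e$; conversely, the clauses of $\varphi^c$ force $\langle\omega_1,E\rangle$ to be extensional and well-founded, so its transitive collapse $t$ lies in $H_{\omega_2}$, the collapse images of the $q_i$ are the $a_i$, and $\chi^E(q_0,\ldots,q_{n-1})$ transfers to $t\models\chi(\vec a)$ and hence to $\langle H_{\omega_2},\in\rangle\models\chi(\vec a)$.
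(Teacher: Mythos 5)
Your proposal is correct, and its crucial technical device coincides with the paper's, but the overall route is genuinely different. The paper proceeds by a syntactic induction on $\varphi$: the atomic formulas $v_0=v_1$ and $v_0\in v_1$ (and their negations) are translated into assertions about the existence of (maximal partial) isomorphisms between the coded structures, bounded quantifiers require pulling an existential second-order quantifier across a first-order universal one (done by coding $\omega_1$-sequences of subsets of $\omega_1$ by a single subset), and only the outermost unbounded existential is handled by quantifying a new code $\kla{S,\alpha}$. You replace the entire induction by one semantic reduction: $\kla{H_{\omega_2},\in}\models\exists y\,\psi(y,\vec a)$ iff some transitive $t$ of size ${\le}\omega_1$ containing the parameters satisfies it (by $\Delta_0$-absoluteness), after which one codes $t$ by a single relation $E$ on $\omega_1$, pins down the parameters against the given codes via collapse isomorphisms $\pi_i$, and relativizes the whole first-order formula to $\kla{\omega_1,E}$; all second-order quantifiers ($E$, the $\pi_i$, the well-foundedness data) are existential and sit in one block in front of a first-order matrix, so $\Sigma^1_1$-ness is immediate. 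The point where the two proofs necessarily agree is the crux you correctly isolate: well-foundedness of a relation on $\omega_1$ is $\Pi^1_1$, and both arguments certify it $\Sigma^1_1$-ly by exploiting the regularity of $\omega_1$ --- the paper by existentially quantifying rank functions $f_\xi$ on every countable initial segment $\kla{\xi,S\cap(\xi\times\xi)}$, you by quantifying an increasing continuous filtration of $\omega_1$ into countable pieces with a rank function on each piece; the paper's certificate is exactly the special case of yours given by the canonical filtration by initial segments, and the verification (a descending $\omega$-sequence has countable range, hence lies in one piece) is identical. What your approach buys is economy and transparency: no case analysis for atomics, no quantifier-pushing lemmas, and the uniformity of the translation --- which the paper needs later, in Observation \ref{obs:EquivalenceBtwSigma1-1AndSigma1-absoluteness} --- remains evident, since your equivalence proof uses only \ZFC{} facts. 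What the paper's clause-by-clause translation buys is locality: it translates arbitrary formulas built from $\Sigma_0$ pieces without first passing to a $\Sigma_1$ normal form. Two small points to tighten in a write-up: extensionality of $E$ must appear as an explicit (first-order) clause of $\varphi^c$, since without it the collapse argument and the satisfaction transfer break down --- you mention it only in passing; and countability of the filtration pieces $s_\beta$ need not be asserted in the formula at all, as it is used only when constructing witnesses in the forward direction.
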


\begin{proof}
The construction of $\varphi^c$ proceeds by induction on $\varphi$. We will assume that $\varphi$ is presented in such a way that the only subformulas of $\varphi$ that are negated are atomic. Any formula can be written in this form.

If $\varphi$ is of the form $v_0=v_1$, then $\varphi^c(X_0,x_0,X_1,x_1)$ is defined in such a way that it expresses: 
there is an injective function $F:\omega_1\To\omega_1$ with $F(x_0)=x_1$, such that whenever $\beta_0 X_0 \beta_1\ldots X_0 \beta_m X_0 x_0$, then $F(\beta_0)X_1F(\beta_1)\ldots X_1F(\beta_m)R_1x_1$ and vice versa. Expressing the existence of such a function requires a second order existential quantification. Hence, the resulting formula $\varphi^c(X_0,x_0,X_1,x_1)$ can be written as a $\Sigma^1_1$ formula.

If $\varphi$ is of the form $v_0\in v_1$, then $\varphi^c(X_0,x_0,X_1,x_1)$ is defined to express: 
there is a $\beta<\omega_1$ such that $\beta X_1 x_1$, and such that the sentence of the form $(v_0=v_1)^c$ holds of $X_0,x_0,X_1,\beta$ (reducing to the previous case). The second order existential quantification occurring in $(v_0=v_1)^c$ can be pushed in front of the first order quantification (``there exists a $\beta<\omega_1$''), in this case simply because both are existential quantifications.

If $\varphi$ is of the form $\neg(v_0=v_1)$, then $\varphi^c(X_0,x_0,X_1,x_1)$ is defined to express: 
there are $U_0,U_1,F$ such that $U_0$ is closed under $X_0$-predecessors, $U_1$ is closed under $X_1$-predecessors and $F:\kla{U_0,X_0\cap U_0^2}\To\kla{U_1,X_1\cap U_1^2}$ is a maximal isomorphism, meaning that $F$ cannot be expanded beyond $U_0$, and it is not the case that $x_0\in U_0$, $x_1\in U_1$ and $F(x_0)=x_1$.

If $\varphi$ is of the form $\neg(v_0\in v_1)$, then this can be expressed equivalently by $\forall v\in v_1 \neg (v_0=v)$. We already know how to translate $\neg(v_0=v)$, and we can then use the definition in the case of bounded quantification below.


The inductive steps corresponding to the logical connectives $\land$ and $\lor$ can be dealt with in the obvious way, setting $(\varphi\land\psi)^c=\varphi^c\land\psi^c$ and $(\varphi\lor\psi)^c=\varphi^c\lor\psi^c$.

Let's look at the case that $\varphi$ is of the form $\forall u\in w\quad\psi(u,w,v_0,\ldots,v_{n-1})$. Define the formula $\varphi^c(Y,y,X_0,x_0,\ldots,X_{n-1},x_{n-1})$ to express: 
for all $\beta Y y$, $\psi(u,w,v_0,\ldots,v_{n-1})^c$ is true of $Y,\beta,Y,y,X_0,x_0,\ldots,X_{n-1},x_{n-1}$. The resulting formula has a universal first order quantification over a $\Sigma^1_1$ formula. Since $\omega_1$-sequences of subsets of $\omega_1$ can be coded by single subsets of $\omega_1$, the second order quantification can be pulled out in front of the first order quantifier, resulting in a $\Sigma^1_1$ formula.

The case of existential bounded quantification is easier, so we omit it here.

Thus, we have described how to translate $\Sigma_0$-formulas. The remaining case is that $\varphi$ is of the form $\exists u\quad\psi(u,v_0,\ldots,v_{n-1})$, where $\psi(u,v_0,\ldots,v_{n-1})$ is a $\Sigma_0$-formula. In this case, the translated formula $\varphi^c(X_0,x_0,\ldots,X_{n-1},x_{n-1})$ expresses that
there are an $S$ (this is second order) and an $\alpha$ such that $\kla{S,\alpha}$ is a code and such that $\psi^c(S,\alpha,X_0,x_0\ldots,X_{n-1},x_{n-1})$ holds. 
Expressing that $\kla{S,\alpha}$ is a code amounts to saying that it is extensional, which is first order expressible, and that it is well-founded.
In order to do this, we use an additional existential second order quantification, saying that there is an $F\sub\omega_1\times\omega_1\times\omega_1$ such that, if we set $f_\xi=\{\kla{\gamma,\delta}\st\kla{\xi,\gamma,\delta}\in F\}$, then $f_\xi:\kla{\xi,S\cap(\xi\times\xi)}\To\kla{\omega_1,<}$ is an order preserving function, for every $\xi<\omega_1$. This can be expressed in a first order way, using the predicates $S$ and $F$ inside $L_{\omega_1}$, and it follows that $S$ is well-founded, because any decreasing $\omega$-sequence in $S$ would be bounded by some $\xi<\omega_1$, contradicting that $f_\xi$ is order preserving. And if $S$ is well-founded, then so is every initial segment $\kla{\xi,S\cap(\xi\times\xi)}$, hence there is an $f_\xi$ as described.
\end{proof}

Note that the proof of the previous observation contained a concrete translation procedure $\varphi\mapsto\varphi^c$ which \ZFC-provably has the properties described, that is, the same translation procedure works in any \ZFC-model. Note also that we could have used any other model of (a fairly weak fragment of) \ZFCm that contains $\omega_1$ in place of $L_{\omega_1}$. We will use this uniformity of the translation procedure in the following proof.

\begin{observation}
\label{obs:EquivalenceBtwSigma1-1AndSigma1-absoluteness}
Let $\P$ be a notion of forcing that preserves $\omega_1$.  Then the following are equivalent:
\begin{enumerate}[label=(\arabic*)]
\item
\label{item:Sigma1AbsolutenessAgain}
$\P$-generic $\Sigma_1(H_{\omega_2})$-absoluteness holds.
\item
\label{item:Sigma1-1AbsolutenessAgain}
$\P$-generic $\mathbf{\Sigma}^1_1(\omega_1)$-absoluteness holds.
\end{enumerate}
\end{observation}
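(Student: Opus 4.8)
The plan is to prove the equivalence of the two generic absoluteness principles by transporting statements back and forth between $\kla{H_{\omega_2},\in}$ and $\kla{H_{\omega_1},\in}$ using the uniform translation procedure $\varphi\mapsto\varphi^c$ furnished by Observation \ref{obs:Translation}. The key observation to exploit is that this translation is \ZFC-provably correct, so the \emph{same} syntactic procedure computes equivalent formulas in $\V$ and in any forcing extension $\V[G]$; moreover, since $\P$ preserves $\omega_1$, a relation $R\sub\omega_1\times\omega_1$ is a code in $\V$ exactly when it is a code in $\V[G]$, and it codes the same set, so codes are absolute between $\V$ and $\V[G]$.

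First I would prove \ref{item:Sigma1AbsolutenessAgain}$\implies$\ref{item:Sigma1-1AbsolutenessAgain}. Given a $\Sigma^1_1$ statement over a model $\kla{\omega_1,\vec A}$ (every $\kappa=\omega_1$-sized structure can be coded on $\omega_1$), I want to realize it as the translation $\psi^c$ of some $\Sigma_1$ formula $\psi$ over $H_{\omega_2}$ applied to codes of the predicates $\vec A$ and the model, each of which lives in $H_{\omega_2}$. Concretely, a $\Sigma^1_1$ assertion $\exists X\ \theta(X,\vec A)$ over $\kla{\omega_1,\vec A}$ asserts the existence of a subset of $\omega_1$ satisfying a first-order condition; this is naturally a $\Sigma_1(H_{\omega_2})$ assertion about the parameters $\vec A$ (``there exists such an $X$, which is an element of $H_{\omega_2}$''). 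Since $\P$ adds no new elements of $\omega_1$ below the codes and preserves $\omega_1$, upward absoluteness of $\Sigma^1_1$ is automatic, and the downward direction follows from $\Sigma_1(H_{\omega_2})$-absoluteness applied to the corresponding $\Sigma_1$ statement, using that the witnessing $X$ reflects down into an element of $H_{\omega_2}^\V$.

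For the converse \ref{item:Sigma1-1AbsolutenessAgain}$\implies$\ref{item:Sigma1AbsolutenessAgain}, I would start with a $\Sigma_1(H_{\omega_2})$ statement $\varphi(\vec a)$ with parameters $\vec a\in H_{\omega_2}$, fix codes $\kla{R_i,\alpha_i}$ for the $a_i$, and apply Observation \ref{obs:Translation} to obtain that $\kla{H_{\omega_2},\in}\models\varphi(\vec a)$ iff $\kla{L_{\omega_1},\in}\models\varphi^c(\vec R,\vec\alpha)$, the latter being $\Sigma^1_1$ over the structure $\kla{\omega_1,\vec R}$ (the $\alpha_i$ being first-order parameters, the $R_i$ second-order predicates). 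The crucial leverage is the uniformity of the translation: the equivalence holds in $\V[G]$ too, with $H_{\omega_2}$ and $L_{\omega_1}$ reinterpreted there, and since the codes $\kla{R_i,\alpha_i}$ remain codes for the same sets $a_i$ in $\V[G]$, both sides transport correctly. Thus $\varphi(\vec a)$ holds in $\V^\P$ iff the $\Sigma^1_1$ statement $\varphi^c$ holds over $\kla{\omega_1,\vec R}$ in $\V^\P$, which by \ref{item:Sigma1-1AbsolutenessAgain} is equivalent to its holding in $\V$, hence to $\varphi(\vec a)$ in $\V$.

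The main obstacle I anticipate is twofold. The technically delicate point is verifying that codes are genuinely absolute: I need that $\kla{\omega_1,R}$ being extensional and well-founded, and $\sigma_R^{-1}(\alpha)$ naming the same set, are all preserved by $\P$ — well-foundedness of a relation on $\omega_1$ is preserved precisely because $\P$ preserves $\omega_1$ (no new $\omega$-descending sequences can appear that weren't already bounded), and this is exactly where the hypothesis on $\P$ is used. The more conceptual subtlety is matching quantifier complexity in the forward direction: I must confirm that a genuine $\Sigma^1_1$ formula over $\kla{\omega_1,\vec A}$ corresponds to a genuine $\Sigma_1$ (not merely $\Sigma_2$) formula over $H_{\omega_2}$, so that Theorem \ref{thm:bagaria} applies; this works because a single second-order existential quantifier over $\omega_1$ becomes a single first-order existential quantifier over $H_{\omega_2}$ ranging over the object $X$, with the remaining first-order part over $\kla{\omega_1,\vec A}$ being absolute (hence $\Delta_0$-like) for $H_{\omega_2}$ once the model and $X$ are given as parameters.
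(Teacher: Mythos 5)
Your proposal is correct and follows essentially the same route as the paper: the forward direction turns the second-order existential quantifier over $\omega_1$ into a first-order existential quantifier over $H_{\omega_2}$ (the first-order matrix being expressible via absolute satisfaction), and the converse uses codes together with the uniform translation of Observation \ref{obs:Translation} and $\mathbf{\Sigma}^1_1(\omega_1)$-absoluteness applied to the structure $\kla{L_{\omega_1},\in,\vec{R},\vec{\alpha}}$, which lies in $\V$. One small correction to your commentary: well-foundedness of a relation is absolute to \emph{any} forcing extension (a rank function in $\V$ persists in $\V[G]$), so that is not where $\omega_1$-preservation enters; rather, the hypothesis guarantees $\omega_1^{\V[G]}=\omega_1^{\V}$, so that $R\sub\omega_1\times\omega_1$ remains a code \emph{in the sense of $\V[G]$} (a relation on all of $\omega_1^{\V[G]}$) and $L_{\omega_1}$ denotes the same structure in $\V$ and $\V[G]$.
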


\begin{proof}
The implication \ref{item:Sigma1AbsolutenessAgain}$\implies$\ref{item:Sigma1-1AbsolutenessAgain} is easy to see, because $\power(\omega_1)\sub H_{\omega_2}$, so a second order existential quantification over $\omega_1$ can be expressed as a first order existential quantification over the elements of $H_{\omega_2}$ which are subsets of $\omega_1$.

For the direction \ref{item:Sigma1-1AbsolutenessAgain}$\implies$\ref{item:Sigma1AbsolutenessAgain}, let $\vec{a}=a_0,\ldots,a_{n-1}$ be a list of parameters in $H_{\omega_2}$, $\varphi(\vec{x})$ a $\Sigma_1$-formula, and suppose that
\[\kla{H_{\omega_2}^{\V[G]},\in}\models\varphi(\vec{a})\]
whenever $G$ is $\P$-generic over $\V$.
In $\V$, let
$\kla{R_0,\alpha_0},\ldots,\kla{R_{n-1},\alpha_{n-1}}$ be codes for $a_0,\ldots,a_{n-1}$, respectively, and let $\varphi^c$ be the $\Sigma^1_1$ translation of $\varphi$ given by Observation \ref{obs:Translation}. Since the same codes work in $\V[G]$, the translation procedure is uniform, we can conclude that
\[(L_{\omega_1}\models\varphi^c(R_0,\alpha_0,\ldots,R_{n-1},\alpha_{n-1}))^{\V[G]}.\]
Clearly, $\varphi^c$ can be replaced by a $\Sigma^1_1$-sentence $\tilde{\varphi}^c$ in the language with predicate/constant symbols $\dot{C}_0,\dot{c}_0,\ldots,\dot{C}_{n-1},\dot{c}_{n-1}$ for the codes, so that we get
\[(\kla{L_{\omega_1},\in,R_0,\alpha_0\ldots,R_{n-1},\alpha_{n-1}}\models\tilde{\varphi}^c)^{\V[G]}\]
Since this model is in $\V$, and it has size $\omega_1$ there, noting that this holds for every $\P$-generic $G$, it follows from $\mathbf{\Sigma}^1_1(\omega_1)$-absoluteness that
\[\kla{L_{\omega_1},\in,R_0,\alpha_0,\ldots,R_{n-1},\alpha_{n-1}}\models\tilde{\varphi}^c\]
holds in $\V$, that is,
\[\kla{L_{\omega_1},\in}\models\varphi^c(R_0,\alpha_0,\ldots,R_{n-1},\alpha_{n-1})\]
holds in $\V$, which means that, undoing the translation, which is uniform, we get that
\[\kla{H_{\omega_2},\in}\models\varphi(\vec{a})\]
as desired.
\end{proof}

The same proof shows the equivalence of the strong forms of these generic absoluteness conditions. Note that either condition \ref{item:StrongSigma1AbsolutenessAgain} or \ref{item:StrongSigma1-1AbsolutenessAgain} of the following observation  implies that $\P$ preserves $\omega_1$.

\begin{observation}
\label{obs:EquivalenceBtwStrongSigma1-1AndStrongSigma1-absoluteness}
Let $\P$ be a notion of forcing. Then the following are equivalent:
\begin{enumerate}[label=(\arabic*)]
\item
\label{item:StrongSigma1AbsolutenessAgain}
Whenever $G$ is generic for $\P$ over $\V$, we have that
\[\kla{H_{\omega_2},\in}\prec_{\Sigma_1}\kla{H_{\omega_2},\in}^{\V[G]}.\]
\item
\label{item:StrongSigma1-1AbsolutenessAgain}
Strong $\P$-generic $\mathbf{\Sigma}^1_1(\omega_1)$-absoluteness holds.
\end{enumerate}
\end{observation}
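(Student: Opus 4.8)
My plan is to follow the proof of Observation \ref{obs:EquivalenceBtwSigma1-1AndSigma1-absoluteness} essentially verbatim, with exactly two adjustments. First, since the strong versions assert a biconditional for each individual generic rather than a quantification over all generics, I would carry out every step relative to a single fixed $G$. Second, because $\omega_1$-preservation is no longer a standing hypothesis, I would open by verifying that it follows from either \ref{item:StrongSigma1AbsolutenessAgain} or \ref{item:StrongSigma1-1AbsolutenessAgain}; once that is in hand, the translation machinery behaves exactly as before.

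\emph{Preservation of $\omega_1$.} Suppose toward a contradiction that $\P$ collapses $\omega_1$, so that in $\V[G]$ the ordinal $\omega_1^\V$ is countable. Assuming \ref{item:StrongSigma1AbsolutenessAgain}, the statement ``there is a surjection $f\colon\omega\To\omega_1^\V$'' is $\Sigma_1$ over $\kla{H_{\omega_2},\in}$ with parameter $\omega_1^\V$, which lies in $H_{\omega_2}$ both in $\V$ and in $\V[G]$; it holds in $\kla{H_{\omega_2},\in}^{\V[G]}$, so by $\prec_{\Sigma_1}$ it holds in $\kla{H_{\omega_2},\in}^\V$, contradicting the uncountability of $\omega_1^\V$ in $\V$. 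Assuming instead \ref{item:StrongSigma1-1AbsolutenessAgain}, I would take $M=\kla{\omega_1^\V,<,P}$, where $P$ is a unary predicate for the finite ordinals, and the $\Sigma^1_1$-sentence $\varphi$ asserting the existence of a binary relation $F$ that is a surjection from $P$ onto the whole domain. Here $M$ has size $\omega_1$ in $\V$, and $\varphi$ holds in $M$ as computed in $\V[G]$ (since $|\omega_1^\V|=|P|=\omega$ there) but fails in $\V$, contradicting strong $\P$-generic $\mathbf{\Sigma}^1_1(\omega_1)$-absoluteness. Either way $\omega_1$ is preserved, whence $L_{\omega_1}^\V=L_{\omega_1}^{\V[G]}$, and a subset of $\omega_1$ still lies in $H_{\omega_2}$ in $\V[G]$.

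\emph{The equivalence, for fixed $G$.} For \ref{item:StrongSigma1AbsolutenessAgain}$\implies$\ref{item:StrongSigma1-1AbsolutenessAgain}, given a size-$\omega_1$ model $M$ (coded by a subset of $\omega_1$, hence an element of $H_{\omega_2}$) and a $\Sigma^1_1$-sentence, the second-order quantifier ranges over $\power(M)\sub H_{\omega_2}$, and first-order satisfaction for $M$ is $\Delta_1$ over $H_{\omega_2}$; thus $M\models\varphi$ is expressed by a $\Sigma_1(H_{\omega_2})$ formula with parameter $M$. Since $\omega_1$ is preserved, this correspondence is equally valid in $\V[G]$, and $\kla{H_{\omega_2},\in}^\V\prec_{\Sigma_1}\kla{H_{\omega_2},\in}^{\V[G]}$ yields $M\models\varphi\iff(M\models\varphi)^{\V[G]}$. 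For \ref{item:StrongSigma1-1AbsolutenessAgain}$\implies$\ref{item:StrongSigma1AbsolutenessAgain}, I would invoke the uniform translation $\varphi\mapsto\varphi^c$ of Observation \ref{obs:Translation}. Given $\Sigma_1$-parameters $\vec a\in H_{\omega_2}^\V$ with codes $\kla{R_i,\alpha_i}\in\V$, these remain codes for the same sets in $\V[G]$: extensionality is first order, and well-foundedness of $\kla{\omega_1,R_i}$ is witnessed by a ranking function in $\V$ that persists to $\V[G]$, so the Mostowski collapse is unchanged. Hence $\kla{H_{\omega_2},\in}\models\varphi(\vec a)$, computed in either $\V$ or $\V[G]$, is equivalent in that model to $\kla{L_{\omega_1},\in,\vec R,\vec\alpha}\models\tilde\varphi^c$, the latter being a $\Sigma^1_1$-sentence over a size-$\omega_1$ structure lying in $\V$; strong $\mathbf{\Sigma}^1_1(\omega_1)$-absoluteness transfers it biconditionally between $\V$ and $\V[G]$, giving $\kla{H_{\omega_2},\in}^\V\models\varphi(\vec a)\iff\kla{H_{\omega_2},\in}^{\V[G]}\models\varphi(\vec a)$.

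The bulk of the argument is genuinely identical to Observation \ref{obs:EquivalenceBtwSigma1-1AndSigma1-absoluteness}, so the only steps requiring real care are the new ones: isolating the correct $\Sigma^1_1$-sentence that detects a collapse of $\omega_1$ on the \ref{item:StrongSigma1-1AbsolutenessAgain} side, and confirming that, for a single fixed $G$, the codes stay codes (via absoluteness of well-foundedness) and $L_{\omega_1}$ is unchanged. I expect the $\omega_1$-preservation step to be the main obstacle; once it is secured, the uniformity of the code translation makes every remaining step go through for the fixed generic $G$.
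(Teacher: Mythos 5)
Your proposal is correct and takes essentially the same approach as the paper: the paper gives no separate argument, stating only that ``the same proof'' as Observation~\ref{obs:EquivalenceBtwSigma1-1AndSigma1-absoluteness} establishes the strong version, and noting (without proof) that either condition implies that $\P$ preserves $\omega_1$. Your write-up simply supplies the details the paper leaves implicit --- the two $\omega_1$-preservation arguments and the observation that the code translation and $L_{\omega_1}$ are unaffected for a fixed generic $G$ --- and both of those details are carried out correctly.
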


Note that either 1.~or 2.~in the previous observation imply that $\P$ preserves $\omega_1$.

In general, we have the following simple observation.

\begin{observation}
\label{observation:1=>2=>3}
Let $\Gamma$ be a natural class of forcing notions, and consider the following statements.
\begin{enumerate}[label=(\arabic*)]
\item 
\label{item:BFA_Gamma}
$\BFA_\Gamma$.
\item 
\label{item:Gamma-genericSigma1-1-absoluteness}
$\Gamma$-generic $\mathbf{\Sigma}^1_1(\omega_1)$-absoluteness.
\item 
\label{GammaPreservesOmega1Omega1AroszajnTrees}
Forcings in $\Gamma$ preserve $(\omega_1,{\le}\omega_1)$-Aronszajn trees.
\end{enumerate}
Then (1)$\iff$(2)$\implies$(3).
\end{observation}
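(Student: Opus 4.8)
The plan is to derive (1)$\iff$(2) purely by chaining the characterizations already in hand, and to prove (2)$\implies$(3) by a direct tree argument; naturalness of $\Gamma$ is the ingredient that makes both go through.

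For (1)$\iff$(2), I would pivot on $\Sigma_1(H_{\omega_2})$-absoluteness. By Bagaria's theorem (Theorem \ref{thm:bagaria}), $\BFA_\Gamma$ is equivalent to the assertion that every $\P\in\Gamma$ satisfies $\P$-generic $\Sigma_1(H_{\omega_2})$-absoluteness. This ``holds in every extension'' form of absoluteness is manifestly invariant under forcing equivalence, so since $\Gamma$ is natural (every $\P_{{\le}p}$ is forcing equivalent to a member of $\Gamma$), it upgrades to the strong form: every $\P\in\Gamma$ satisfies strong $\P$-generic $\Sigma_1(H_{\omega_2})$-absoluteness. By Observation \ref{obs:EquivalenceBtwStrongSigma1-1AndStrongSigma1-absoluteness} this is equivalent, forcing by forcing, to strong $\P$-generic $\mathbf{\Sigma}^1_1(\omega_1)$-absoluteness — and these strong principles automatically give that $\P$ preserves $\omega_1$, so no separate hypothesis is needed. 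Quantifying over $\Gamma$ and applying Observation \ref{observation:EquivalenceBtwStrongAndRegularAbsForNaturalForcingClasses}\ref{item:StrongObsoleteForNaturalGamma}, which for natural classes collapses the strong and ordinary forms, yields exactly (2).

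For (2)$\implies$(3), suppose some $\P\in\Gamma$ adds a cofinal branch to an $(\omega_1,{\le}\omega_1)$-Aronszajn tree $T$. Such a $T$ has exactly $\omega_1$ nodes, which I take to be countable ordinals, so $M=\kla{\omega_1,{\le}_T}$, expanded by a predicate recording the level decomposition, is a structure of size $\omega_1$ on which ``$T$ has a cofinal branch'' is the $\Sigma^1_1$-sentence $\varphi$ asserting the existence of a $\le_T$-chain $B\sub\omega_1$ meeting every level. Since $T$ is Aronszajn, $M\not\models\varphi$ in $\V$. Using naturalness again, Observation \ref{observation:EquivalenceBtwStrongAndRegularAbsForNaturalForcingClasses}\ref{item:StrongObsoleteForNaturalGamma} lets me read (2) in its strong form, so from $(M\models\varphi)^{\V[G]}$ for a $\P$-generic $G$ adding the branch I conclude $M\models\varphi$ in $\V$, a contradiction. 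Equivalently, one can avoid the strong form by choosing $p\in G$ forcing a branch to exist, replacing $\P_{{\le}p}$ by the equivalent member of $\Gamma$ which then forces $\varphi$ in \emph{every} extension, and applying ordinary absoluteness.

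The crux — and the only place real care is needed — is this last maneuver. Ordinary $\mathbf{\Sigma}^1_1(\omega_1)$-absoluteness only descends to $\V$ from statements holding in \emph{all} generic extensions, whereas a branch is added in just one; bridging that gap is exactly what naturalness buys, either by promoting ordinary to strong absoluteness or by localizing below a condition. The remaining points — that an $(\omega_1,{\le}\omega_1)$-tree has size $\omega_1$ and that ``has a cofinal branch'' is faithfully $\Sigma^1_1$ over such a structure — are routine.
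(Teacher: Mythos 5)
Your proposal is correct and takes essentially the same route as the paper: (1)$\iff$(2) by chaining Bagaria's characterization with the translation between $\Sigma_1(H_{\omega_2})$- and $\mathbf{\Sigma}^1_1(\omega_1)$-absoluteness and the collapse of strong and ordinary absoluteness for natural classes, and (2)$\implies$(3) by expressing ``$T$ has a cofinal branch'' as a $\Sigma^1_1$-sentence over a structure of size $\omega_1$. If anything, your handling of (2)$\implies$(3) is more careful than the paper's one-line argument, since you explicitly bridge (via naturalness, either through the strong form or by localizing below a condition) the gap between a branch appearing in a single generic extension and the ``holds in all extensions'' hypothesis of ordinary absoluteness.
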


\begin{proof}
\ref{item:BFA_Gamma}$\iff$\ref{item:Gamma-genericSigma1-1-absoluteness} follows from Observations \ref{observation:EquivalenceBtwStrongAndRegularAbsForNaturalForcingClasses}.\ref{item:StrongObsoleteForNaturalGamma}, \ref{obs:EquivalenceBtwStrongSigma1-1AndStrongSigma1-absoluteness} and
Theorem \ref{thm:bagaria2}. The implication \ref{item:Gamma-genericSigma1-1-absoluteness}$\implies$\ref{GammaPreservesOmega1Omega1AroszajnTrees} follows because if $T$ were an $(\omega_1,{\le}\omega_1)$-Aronszajn tree that acquires a cofinal branch in $\V[G]$, where $G$ is $\P$-generic for some $\P\in\Gamma$, then the existence of such a branch would be a $\Sigma^1_1(T)$ statement true in $\V[G]$ but false in $\V$, contradicting \ref{item:Gamma-genericSigma1-1-absoluteness}.
\end{proof}

So, writing \BSCFA{} for the bounded subcomplete forcing axiom, we get the following equivalences, using the fact that the class of subcomplete forcing notions is natural (see Observation \ref{observation:EquivalenceBtwStrongAndRegularAbsForNaturalForcingClasses}), as well as
Theorem \ref{thm:bagaria2}, Lemma
\ref{lem:CharacterizationOfStrongGenericAbsolutenessUnderCH} and Observation \ref{obs:EquivalenceBtwSigma1-1AndSigma1-absoluteness}.

\begin{theorem}
\label{thm:UnderCHEverythingIsClear}
Assuming \CH, the following are equivalent.
\begin{enumerate}[label=(\arabic*)]
\item \BSCFA.
\item Subcomplete generic $\mathbf{\Sigma}^1_1(\omega_1)$-absoluteness.
\item Subcomplete forcing preserves $(\omega_1,{\le}\omega_1)$-Aronszajn trees.
\end{enumerate}
Actually, (1) and (2) are equivalent, regardless of whether \CH holds or not, and (1)/(2) always implies (3), but for the converse, we need \CH, since the failure of $\CH$ implies (3) (by Theorem \ref{thm:SCforcingAddsNoNewBranchGeneral}), but not (1)/(2).
\end{theorem}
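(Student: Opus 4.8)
The plan is to reduce everything to characterizations already established, isolating the single place where \CH{} is essential. Throughout, write $\Gamma$ for the class of subcomplete forcing notions, which is natural by Fact \ref{fact:SCforcingIsNatural}.

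The implications (1)$\iff$(2)$\implies$(3) come immediately from Observation \ref{observation:1=>2=>3} applied to $\Gamma$: for the class of subcomplete forcings, its statement (1) is exactly \BSCFA, (2) is subcomplete generic $\mathbf{\Sigma}^1_1(\omega_1)$-absoluteness, and (3) is preservation of $(\omega_1,{\le}\omega_1)$-Aronszajn trees. Unwinding for the record, (1)$\iff$(2) runs through the chain $\BSCFA=\BFA_\Gamma$ $\iff$ the strong bounded forcing axiom for each $\P\in\Gamma$ (using naturalness, since each $\P_{{\le}p}$ is forcing equivalent to a member of $\Gamma$) $\iff$ strong $\P$-generic $\Sigma_1(H_{\omega_2})$-absoluteness for each $\P\in\Gamma$ (Theorem \ref{thm:bagaria2}) $\iff$ strong $\Gamma$-generic $\mathbf{\Sigma}^1_1(\omega_1)$-absoluteness (Observation \ref{obs:EquivalenceBtwStrongSigma1-1AndStrongSigma1-absoluteness}) $\iff$ (2) (naturalness again, via Observation \ref{observation:EquivalenceBtwStrongAndRegularAbsForNaturalForcingClasses}); and (2)$\implies$(3) holds because a cofinal branch added to an $(\omega_1,{\le}\omega_1)$-Aronszajn tree $T$ would witness a $\Sigma^1_1(T)$-sentence true in $\V[G]$ but false in $\V$. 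None of this uses \CH.

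The real content is (3)$\implies$(2), and this is where \CH{} enters. Here I would use that subcomplete forcing never adds reals (by Observation \ref{obs:MinimalIsWeaker} together with Fact \ref{fact:MinimalSCenoughForPreservationProperties}). Fix an arbitrary subcomplete $\P$ and assume (3). Then $\P$ both preserves $(\omega_1,{\le}\omega_1)$-Aronszajn trees and adds no reals, so condition \ref{item:NoNewRealsAndBranches2} of Lemma \ref{lem:CharacterizationOfStrongGenericAbsolutenessUnderCH} holds. By that lemma---whose proof invokes \CH{} to turn a $\Sigma^1_1$ witness in $\V[G]$ into a tree of height and cardinality $\omega_1$ in $\V$---strong $\P$-generic $\mathbf{\Sigma}^1_1(\omega_1)$-absoluteness holds. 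Since $\P\in\Gamma$ was arbitrary, strong $\Gamma$-generic $\mathbf{\Sigma}^1_1(\omega_1)$-absoluteness holds, and by naturalness (Observation \ref{observation:EquivalenceBtwStrongAndRegularAbsForNaturalForcingClasses}) this is equivalent to (2).

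It remains to verify the closing assertions. The equivalence (1)$\iff$(2) and the implication (1)/(2)$\implies$(3) were obtained above without \CH, so they hold unconditionally; and for the necessity of \CH, if \CH{} fails then $2^\omega>\omega_1$, whence every $(\omega_1,{\le}\omega_1)$-tree is an $(\omega_1,{<}2^\omega)$-tree and Theorem \ref{thm:SCforcingAddsNoNewBranchGeneral} yields (3) outright, while \BSCFA{} remains a substantive hypothesis that can consistently fail, so (3) does not imply (1)/(2) without \CH. The main obstacle is organizational rather than a single hard argument: the substance is packaged inside Lemma \ref{lem:CharacterizationOfStrongGenericAbsolutenessUnderCH} (the \CH-dependent tree characterization) and the translation Observations \ref{obs:Translation} and \ref{obs:EquivalenceBtwSigma1-1AndSigma1-absoluteness}, so the care required lies in keeping the strong and non-strong forms of each principle distinct and applying naturalness of $\Gamma$ at precisely the passages between them.
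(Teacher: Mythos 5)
Your proposal is correct and takes essentially the same route as the paper: the paper's proof consists exactly of citing naturalness of the subcomplete class (Fact \ref{fact:SCforcingIsNatural} and Observation \ref{observation:EquivalenceBtwStrongAndRegularAbsForNaturalForcingClasses}), Theorem \ref{thm:bagaria2}, Lemma \ref{lem:CharacterizationOfStrongGenericAbsolutenessUnderCH}, and Observation \ref{obs:EquivalenceBtwSigma1-1AndSigma1-absoluteness}, which is the same chain you unwind, with \CH{} entering at exactly the same point (the tree characterization in Lemma \ref{lem:CharacterizationOfStrongGenericAbsolutenessUnderCH}, fed by the fact that subcomplete forcing adds no reals). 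Your closing argument for why (3) does not imply (1)/(2) absent \CH{} also matches the paper's subsequent remarks, which rest on the nontrivial consistency strength of \BSCFA{} versus the outright truth of (3) under $\neg\CH$.
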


Obviously, this theorem generalizes to any natural class $\Gamma$ of forcing notions that don't add reals.

Let us make some remarks on the consistency strength of \BSCFA{} and its relationship to \CH. It was shown in \cite{Fuchs:HierarchiesOfForcingAxioms} that the consistency strength of \BSCFA{} is a reflecting cardinal. Moreover, looking at the construction there, one sees that the consistency strengths of \BSCFA{} and of $\BSCFA+\CH$ are the same. Namely, if $\BSCFA$ holds, then $\kappa=\omega_2$ is reflecting in $L$, and one can perform a subcomplete (in the sense of $L$) forcing over $L$ to reach a model $L[g]$ where $\omega_2=\kappa$. Since the forcing is subcomplete in $L$, it does not add reals, and hence preserves \CH.
In a sense, in the context of \BSCFA, it is natural to assume \CH, since it holds in the ``natural'' models, and since \CH{} is implied by natural strengthenings of \BSCFA, such as the resurrection axiom or the maximality principle for subcomplete forcing, see \cite{Kaethe:Diss}, \cite{Fuchs:HierarchiesOfRA}.

However, \BSCFA{} does not imply \CH, and in fact, the first author, in joint work with Corey Switzer, observed that the consistency strength of $\BSCFA+\neg\CH$ is equal to that of $\BSCFA$, that is, the existence of a reflecting cardinal. Thus, assuming $\neg\CH$, condition 3.~in the previous theorem holds, while the consistency strength of the equivalent conditions 1.~and 2.~is a reflecting cardinal, showing that the implication cannot be reversed.

This puts us in a position to answer Question \ref{ques:aronszajnbranch?}, asking whether subcomplete forcing may add a cofinal branch to an $(\omega_1,{\le}\omega_1)$-Aronszajn tree, completely. Recall Theorem \ref{thm:SCforcingAddsNoNewBranchGeneral}, which gives us part 1.~of the following theorem.

\begin{theorem}
\label{theorem:CompleteAnswer}
Splitting in two cases, we have:
\begin{enumerate}[label=(\arabic*)]
  \item If \CH{} fails, then subcomplete forcing preserves $(\omega_1,{\le}\omega_1)$-Aronszajn trees.
  \item If \CH{} holds, then subcomplete forcing preserves $(\omega_1,{\le}\omega_1)$-Aronszajn trees iff \BSCFA{} holds.
\end{enumerate}
\end{theorem}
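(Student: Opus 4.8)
The plan is to assemble Theorem \ref{theorem:CompleteAnswer} almost entirely from results already established in the excerpt, since the two cases are genuinely dichotomous and each has a clear source. For part (1), the statement that the failure of \CH{} implies preservation of $(\omega_1,{\le}\omega_1)$-Aronszajn trees is an immediate corollary of Theorem \ref{thm:SCforcingAddsNoNewBranchGeneral}: if $\neg\CH$ holds, then $2^\omega>\omega_1$, so every $(\omega_1,{\le}\omega_1)$-tree is in particular an $(\omega_1,{<}2^\omega)$-tree, and Theorem \ref{thm:SCforcingAddsNoNewBranchGeneral} tells us subcomplete forcing adds no cofinal branch to such a tree. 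Hence no $(\omega_1,{\le}\omega_1)$-Aronszajn tree can acquire a cofinal branch, and since subcomplete forcing adds no reals, the tree remains an $(\omega_1,{\le}\omega_1)$-tree and stays Aronszajn. I would state this in one or two sentences.

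For part (2), I would invoke Theorem \ref{thm:UnderCHEverythingIsClear} directly. Under \CH, that theorem gives the equivalence of \BSCFA{} (item (1)), subcomplete generic $\mathbf{\Sigma}^1_1(\omega_1)$-absoluteness (item (2)), and the statement that subcomplete forcing preserves $(\omega_1,{\le}\omega_1)$-Aronszajn trees (item (3)). In particular, under \CH, item (3) holds if and only if item (1) holds, which is exactly the claim of part (2) here. So the proof reduces to citing Theorem \ref{thm:UnderCHEverythingIsClear}.

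The honest assessment is that there is essentially no new mathematical content in this theorem: it is a packaging result that combines Theorem \ref{thm:SCforcingAddsNoNewBranchGeneral} (for the $\neg\CH$ case) with the \CH-equivalences of Theorem \ref{thm:UnderCHEverythingIsClear} (for the \CH{} case). The only thing requiring a moment's care is the observation in part (1) that under $\neg\CH$ one has $\omega_1<2^\omega$, so that the width bound $\le\omega_1$ falls strictly below $2^\omega$ and Theorem \ref{thm:SCforcingAddsNoNewBranchGeneral} applies verbatim; this is the sole ``step'' where one must check a hypothesis rather than merely cite. I would therefore write the proof as a short bookkeeping argument.

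\begin{proof}
For (1), assume \CH{} fails, so that $\omega_1<2^\omega$. Then every $(\omega_1,{\le}\omega_1)$-tree is an $(\omega_1,{<}2^\omega)$-tree. By Theorem \ref{thm:SCforcingAddsNoNewBranchGeneral}, subcomplete forcing cannot add a cofinal branch to such a tree. Hence, if $T$ is an $(\omega_1,{\le}\omega_1)$-Aronszajn tree and $G$ is $\P$-generic for a subcomplete $\P$, then $T$ has no cofinal branch in $\V[G]$; and since subcomplete forcing adds no reals and preserves $\omega_1$, $T$ remains an $(\omega_1,{\le}\omega_1)$-tree in $\V[G]$, so it remains Aronszajn. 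This is precisely the statement that subcomplete forcing preserves $(\omega_1,{\le}\omega_1)$-Aronszajn trees.

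For (2), assume \CH. By the equivalence of conditions (2) and (3) in Theorem \ref{thm:UnderCHEverythingIsClear}, subcomplete forcing preserves $(\omega_1,{\le}\omega_1)$-Aronszajn trees if and only if \BSCFA{} holds.
\end{proof}
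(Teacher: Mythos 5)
Your proposal is correct and matches the paper's own treatment exactly: the paper also derives part (1) directly from Theorem \ref{thm:SCforcingAddsNoNewBranchGeneral} (noting that $\neg\CH$ makes every $(\omega_1,{\le}\omega_1)$-tree an $(\omega_1,{<}2^\omega)$-tree) and part (2) from the equivalence of \BSCFA{} with Aronszajn-tree preservation in Theorem \ref{thm:UnderCHEverythingIsClear}. Your added remark that the tree remains an $(\omega_1,{\le}\omega_1)$-tree in the extension (since $\omega_1$ is preserved) is a harmless extra check that the paper leaves implicit.
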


It is now interesting to explore the relationships between bounded forcing axioms, the forms of generic $\Sigma^1_1$-absoluteness introduced above, and the property of $(\omega_1,{\le}\omega_1)$-Aronszajn tree preservation for other canonical classes of forcing. Let's first observe some limitations on $\Gamma$-generic $\mathbf{\Sigma}^1_1(\kappa)$-absoluteness.

\begin{observation}
\label{obs:H-omega1NotInteresting}
\begin{enumerate}[label=(\arabic*)]
\item
\label{item:AddOmega11}
If $\CH$ fails, then $\Add(\omega_1,1)$-generic $\mathbf{\Sigma}^1_1(\omega_2)$-absoluteness fails.
\item
\label{item:Collapse}
$\Col{\omega_1,\omega_2}$-generic $\mathbf{\Sigma}^1_1(\omega_2)$-absoluteness fails.
\item
\label{item:AddingReals}
If $\P$ is a forcing that adds a real, then $\P$-generic $\mathbf{\Sigma}^1_1(2^\omega)$-absoluteness fails.
\end{enumerate}
\end{observation}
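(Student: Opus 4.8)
The plan is to refute $\mathbf{\Sigma}^1_1(\kappa)$-absoluteness in each case by producing a structure $M$ of size $\kappa$ (computed in $\V$) together with a $\Sigma^1_1$-sentence $\varphi$ that holds in every generic extension by the relevant forcing but fails in $\V$. By the reformulation of $\mathbf{\Sigma}^1_1(\kappa)$-absoluteness recorded after Definition \ref{def:GenericAbsoluteness} (using $\Sigma^1_1$-upward absoluteness), this is exactly what makes absoluteness fail.

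For \ref{item:AddOmega11} and \ref{item:Collapse} the witnessing statement is ``$\omega_2$, as computed in $\V$, is collapsed to $\omega_1$''. I would take $M=\kla{\omega_2,P}$, where $P\sub\omega_2$ marks the ordinals below $\omega_1$; this has size $\omega_2$ in $\V$. Let $\varphi$ assert the existence of a binary relation $F$ on the domain which is a function whose arguments all lie in $P$ and whose values exhaust the whole domain; this is $\Sigma^1_1$, being a single existential second-order quantifier over $F$ followed by first-order clauses. In $\V$, $\omega_2$ is a cardinal and $P$ has order type $\omega_1<\omega_2$, so no such surjection exists and $M\not\models\varphi$. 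For \ref{item:Collapse}, $\Col{\omega_1,\omega_2}$ gives $\omega_2^\V$ cardinality $\omega_1$ while preserving $\omega_1$, so in every extension $P$ and the domain both have cardinality $\omega_1$ and a surjection from $P$ onto the domain exists; hence $M\models\varphi$ in every extension, and absoluteness fails. For \ref{item:AddOmega11} I would first check that $\Add(\omega_1,1)$ also collapses $\omega_2$ when $\neg\CH$ holds: it adds no reals (being countably closed) and forces \CH{} (as used in Remark \ref{remark:CHorNonCH}), so in any extension $|\power(\omega)^\V|=|\power(\omega)^{\V[G]}|=\omega_1$; since $\neg\CH$ supplies an injection of $\omega_2^\V$ into $\power(\omega)^\V$ in $\V$, which is preserved, $\omega_2^\V$ is collapsed to $\omega_1$. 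The same $M$ and $\varphi$ then work as in \ref{item:Collapse}.

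For \ref{item:AddingReals} the witnessing statement is ``there is a real not in $\V$''. Since here $\kappa=2^\omega$, I would fix a bijection $e\colon 2^\omega\To\power(\omega)^\V$ and set $M=\kla{2^\omega,R,P_\omega}$, where $P_\omega$ marks the first $\omega$ ordinals (a copy of $\omega$) and $R\sub 2^\omega\times P_\omega$ is given by $R(\alpha,n)\iff n\in e(\alpha)$, so that the sets $\set{n\in P_\omega}{R(\alpha,n)}$ enumerate precisely the reals of $\V$; this $M$ has size $2^\omega$. Let $\varphi$ be the $\Sigma^1_1$-sentence asserting the existence of $X\sub P_\omega$ that differs from $\set{n\in P_\omega}{R(\alpha,n)}$ for every $\alpha$. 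As $R$ lists all reals of $\V$, we have $M\not\models\varphi$ in $\V$; and as $\P$ adds a real, every $\P$-generic extension contains such an $X$, so $M\models\varphi$ holds throughout, refuting absoluteness. (If ``$\P$ adds a real'' is only witnessed below some condition $p$, one passes to $\P_{{\le}p}$.)

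The routine verifications are that each $\varphi$ is genuinely $\Sigma^1_1$ and that each $M$ has the advertised size in $\V$. The only point needing an actual argument rather than a direct appeal is the collapse of $\omega_2$ by $\Add(\omega_1,1)$ in \ref{item:AddOmega11}; this is the main thing to pin down, and it is immediate from ``adds no reals'' together with ``forces \CH'' once $\neg\CH$ is assumed in $\V$.
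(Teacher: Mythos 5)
Your proposal is correct and takes essentially the same approach as the paper: in each case one exhibits a $\Sigma^1_1$ sentence over a structure of the required size that holds in every generic extension but fails in $\V$ (for (2), a surjection of $\omega_1$ onto $\omega_2$; for (3), a real avoiding a fixed enumeration of $\power(\omega)^\V$). The only cosmetic difference is in (1), where the paper takes $M\prec H_{\omega_1}$ of size $\omega_2$ and the sentence ``some $F$ with domain $\omega_1$ lists all the reals of $M$,'' whereas you reduce (1) to the collapse statement of (2) using that $\Add(\omega_1,1)$ adds no reals and forces \CH{} --- the same underlying facts.
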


\begin{proof}
For \ref{item:AddOmega11}, if \CH{} fails, we can take $M$ to be an elementary submodel of $H_{\omega_1}$ of size $\omega_2$, containing $\omega_2$ many distinct reals. If $G$ is generic for $\Add(\omega_1,1)$ over $\V$, then in $\V[G]$, $M$ satisfies the $\Sigma^1_1$ statement that there is a function $F$ with domain $\omega_1$ that lists all the reals of $M$, but it is false in $\V$.

For \ref{item:Collapse}, we can take $M$ to be the structure $\omega_2$, equipped with an ordinal pairing function and a constant symbol for $\omega_1$. Then if $G$ is $\Col{\omega_1,\omega_2}$-generic over $\V$, in $\V[G]$, the structure $M$ satisfies the $\Sigma^1_1$ sentence expressing that there is a surjection $F$ from $\omega_1$ onto the universe of $M$. This is not true in $\V$.

For \ref{item:AddingReals}, if $\P$ adds a real, then the $\Sigma^1_1$-formula ``there is an $X\sub\omega$ such that for all $x$, $x\neq X$'' holds, from the point of view of $\V^\P$, in the structure $\kla{H_{\omega_1}^\V,\in}$, but not from the point of view of $\V$.
\end{proof}

So, by \ref{item:Collapse} of the above observation, if $\Gamma$ is the class of proper, semi-proper or stationary set preserving, subcomplete, or countably closed forcings, then $\Gamma$-generic $\mathbf{\Sigma}^1_1(\omega_2)$-absoluteness fails.

\begin{theorem}
\label{thm:GeneralSituation}
Let $\Gamma$ be the class of proper, semi-proper, stationary set preserving, ccc or subcomplete forcing notions. Consider the following properties. \begin{enumerate}[label=(\arabic*)]
\item $\BFA_\Gamma.$
\item $\Gamma$-generic $\mathbf{\Sigma}^1_1(\omega_1)$-absoluteness.
\item Forcings in $\Gamma$ preserve $(\omega_1,{\le}\omega_1)$-Aronszajn trees.
\end{enumerate}
Then (1)$\iff$(2)$\implies$(3), but (3) does not imply (1)/(2).
\end{theorem}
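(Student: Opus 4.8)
The plan is to get the forward implications for free from the apparatus already developed, and then to prove that the arrow $(3)\Rightarrow(1)$ cannot be reversed by treating the subcomplete class separately from the other four.

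\emph{Forward implications.} Each of the five classes in question is natural: for subcomplete forcing this is Fact \ref{fact:SCforcingIsNatural}, and for the ccc, proper, semi-proper and stationary set preserving classes it is recorded in Observation \ref{observation:EquivalenceBtwStrongAndRegularAbsForNaturalForcingClasses}.\ref{item:ClassesOfInterestAreNatural}. Thus Observation \ref{observation:1=>2=>3} applies to each such $\Gamma$ and immediately yields $(1)\iff(2)\implies(3)$, with no further work; this is the entire positive half of the statement.

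\emph{Separation for the four classical classes.} For $\Gamma$ the ccc, proper, semi-proper or stationary set preserving class I would exploit that each of these classes contains every ccc forcing, so $\BFA_\Gamma$ implies its restriction to ccc posets, which is exactly $\MA_{\omega_1}$. Hence in any model of $\neg\MA_{\omega_1}$ both $(1)$ and, via $(1)\iff(2)$, $(2)$ fail simultaneously for all four classes, and it suffices to produce such a model in which $(3)$ still holds. The engine for $(3)$ is the elementary remark that a \emph{special} $(\omega_1,{\le}\omega_1)$-Aronszajn tree $T=\bigcup_{n<\omega}A_n$ (a countable union of antichains) cannot acquire a cofinal branch under any $\omega_1$-preserving forcing: incomparability is absolute, so each $A_n$ stays an antichain, and a new cofinal branch would be a chain of length $\omega_1$ meeting each $A_n$ in at most one point, hence countable, which is absurd. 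So $(3)$ holds for every $\omega_1$-preserving class in any model in which all $(\omega_1,{\le}\omega_1)$-Aronszajn trees are special, and the separation for these four classes reduces to passing to a model where every such tree is special while $\MA_{\omega_1}$ fails---for instance a Baumgartner-style specializing iteration that does not force the full Martin's Axiom, this being strictly weaker than $\MA_{\omega_1}$.

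\emph{Separation for the subcomplete class.} Here $\BFA_\Gamma=\BSCFA$ does \emph{not} imply $\MA_{\omega_1}$ (it is even consistent with \CH), so I argue instead in the absence of \CH. By Theorem \ref{thm:SCforcingAddsNoNewBranchGeneral}, once \CH{} fails every $(\omega_1,{\le}\omega_1)$-tree has width below $2^\omega$, so subcomplete forcing preserves all $(\omega_1,{\le}\omega_1)$-Aronszajn trees and $(3)$ holds outright; meanwhile, as recalled after Theorem \ref{thm:UnderCHEverythingIsClear}, \BSCFA{} carries the consistency strength of a reflecting cardinal. Since $\mathrm{Con}(\ZFC)$ yields $\mathrm{Con}(\ZFC+\neg\CH)$, there is a model of $\neg\CH$ of mere \ZFC-strength, and in it $(3)$ holds while $(1)/(2)$ fail.

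The hard part will be the model-building step for the four classical classes: one must arrange, against $\neg\MA_{\omega_1}$, that \emph{every} $(\omega_1,{\le}\omega_1)$-Aronszajn tree is special (in particular that no Suslin tree exists), and the genuinely delicate point is the treatment of the \emph{wide} trees, since the usual finite-condition specialization poset and the verification of its ccc-ness are tailored to trees with countable levels. Confirming that such a model exists---or, short of full specialization, that the wide Aronszajn trees there still cannot gain branches under $\omega_1$-preserving forcing---is where the real content of the non-implication lies.
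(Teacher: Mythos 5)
Your forward half (naturalness plus Observation \ref{observation:1=>2=>3}) and your subcomplete case (Theorem \ref{thm:SCforcingAddsNoNewBranchGeneral} plus the reflecting-cardinal consistency strength of \BSCFA) are exactly the paper's argument. The problem is the four classical classes, where you yourself flag that the ``real content'' is missing: you reduce everything to producing a model of $\neg\MA_{\omega_1}$ in which \emph{every} $(\omega_1,{\le}\omega_1)$-Aronszajn tree is special (or at least cannot gain branches under $\omega_1$-preserving forcing), and you never produce it. This is a genuine gap, not a routine verification. The standard Baumgartner-style iteration specializes trees with \emph{countable} levels; for wide trees the finite-condition specialization poset is not ccc in \ZFC{} (its ccc-ness is itself typically derived from $\MA_{\omega_1}$), so a ``specializing iteration avoiding full \MA'' aimed at wide trees is precisely the delicate construction you would owe, and your fallback---that wide trees in such a model cannot gain branches under arbitrary $\omega_1$-preserving forcing---has no visible proof either, since the thinning-to-a-subtree argument needs the branch-adding forcing to be ccc, not merely $\omega_1$-preserving.

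The paper avoids this entirely by splitting the classical classes in a way you did not consider, and in particular it never needs a model of $\neg\MA_{\omega_1}$ with all wide trees special. For $\Gamma$ proper, semi-proper, or stationary set preserving, it works \emph{inside} a model of full $\MA_{\omega_1}$: by Baumgartner's theorem, $\MA_{\omega_1}$ already implies that every tree of size $\omega_1$ with no uncountable branch---so every $(\omega_1,{\le}\omega_1)$-Aronszajn tree---is special, hence preserved by all $\omega_1$-preserving forcing, giving (3); and since $\MA_{\omega_1}$ is equiconsistent with \ZFC{} while $\BFA_\Gamma$ for these classes has consistency strength at least a reflecting cardinal, (1) must fail in some such model. (Your reduction to $\neg\MA_{\omega_1}$ is exactly what blocks you here: for these three classes one does not need $\MA_{\omega_1}$ to fail, only $\BFA_\Gamma$, and consistency strength does that while keeping $\MA_{\omega_1}$, hence specialness of wide trees, available.) This leaves only ccc, where $\BFA_{\mathrm{ccc}}=\MA_{\omega_1}$ and the consistency-strength trick is useless; there the paper uses the Mildenberger--Shelah model of $\CH$ plus ``every (thin) Aronszajn tree is special,'' and then an argument specific to ccc forcing: if a ccc poset forced a cofinal branch through a wide Aronszajn tree $T$, the set of nodes forced into the branch by some condition below $p$ is a downward-closed subtree with \emph{countable} levels (pairwise incompatible conditions decide distinct nodes on a level), hence a thin Aronszajn tree, hence special, hence branchless in any $\omega_1$-preserving extension---a contradiction; meanwhile $\CH$ refutes $\MA_{\omega_1}$, so (1)/(2) fail. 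You should adopt this two-pronged structure; as written, your proposal does not close.
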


\begin{proof}
By Observation \ref{observation:1=>2=>3}, we know that (1)$\iff$(2)$\implies$(3) holds. Let's show that (3) does not imply (2).

In the case of subcomplete forcing, we have already seen that (3) follows from $\neg\CH$, while (1)/(2) have consistency strength a reflecting cardinal.

For the case of ccc forcing, recall that is known that $\CH$ is consistent with the statement that every Aronszajn tree is special, see \cite{MildenbergerShelah:SpecialisingAronszajnTrees}. But if every Aronszajn tree is special, then ccc forcing cannot add a cofinal branch to any $(\omega_1,{\le}\kappa)$-Aronszajn tree $T$, no matter how wide it is: assume $\P$ were a c.c.c.~forcing that did. Let $\dot{b}$ be a $\P$-name for a cofinal branch through $T$, and let $p\in\P$ force this. Let $X$ be the set of members $x$ of $T$ such that some $q\le p$ forces that $\check{x}\in\dot{b}$. Then $X$ is closed under $T$-predecessors, because if $x\le y\in X$ and $q\le p$ forces that $\check{y}\in\dot{b}$, then $q$ also forces that $\check{x}\in\dot{b}$. Also, the set $X$ has nodes at arbitrarily large heights less than $\omega_1$, since $p$ forces that $\dot{b}$ is a cofinal branch. Moreover, for any $\alpha<\omega_1$, $X$ has at most countably many nodes at level $\alpha$ of $T$, because for any such node, there is a condition below $p$ that forces that that node is in $\dot{b}$, and these conditions have to be pairwise incompatible, so that the claim follows from the fact that $\P$ is c.c.c.
This shows that the restriction $\bar{T}$ of $T$ to $X$ is an $\omega_1$-tree, hence an Aronszajn tree, and hence special. Now we have a contradiction, since $\P_{{\le}p}$ adds a branch to $\bar{T}$. This is impossible, since $\P$ preserves $\omega_1$. But now, in any model of \CH{} in which every Aronszajn tree is special, (3) is satisfied, while (1) and (2) fail, since (1)/(2) imply the failure of \CH.

To cover the remaining cases, we will show that if $\Gamma$ is a natural forcing class containing all proper forcing notions, then the assertion that forcing notions in $\Gamma$ preserve $(\omega_1,{\le}\omega_1)$-Aronszajn trees does not imply $\BFA_\Gamma$.
To see this, recall that it follows from $\MA_{\omega_1}$ that every $(\omega_1,{\le}\omega_1)$-Aronszajn tree is special, and hence that every such tree is preserved by every $\omega_1$-preserving forcing. But the consistency strength of $\MA_{\omega_1}$ is the same as that of $\ZFC$, while the consistency strength of $\BFA_\Gamma$ is at least a reflecting cardinal. \end{proof}

Recall that under $\CH$, the versions of the three conditions listed in the previous theorem for subcomplete forcing are equivalent. The proof showed that this is not the case for c.c.c.~forcing (for (3) is consistent with \CH, in this case, while (1) is not). Of course, subcomplete forcing is the only class considered here whose bounded forcing axiom is consistent with \CH, and it is in the context of \CH{} that we have this unusual equivalence between the three conditions.

%
%

%

\bibliographystyle{abbrv}
\bibliography{literatur}

\end{document}